\documentclass[12pt,leqno]{article}
\usepackage{amssymb,amsmath,amstext}
\usepackage{amsthm}
\usepackage{subcaption}
\usepackage{authblk}
\usepackage{xcolor}
\usepackage{float}

\newtheorem{theorem}{Theorem}[section]
\newtheorem{lemma}[theorem]{Lemma}

\newtheorem{remark}[theorem]{Remark}
\newtheorem{example}[theorem]{Example}
\newtheorem{corollary}[theorem]{Corollary}

\DeclareMathOperator\arctanh{arctanh}
\usepackage[pdftex]{graphicx}
\pagestyle{plain}
\topmargin-1truecm
\textwidth18truecm
\oddsidemargin-1truecm
\evensidemargin-1truecm
\textheight22truecm

\newcommand{\drawat}[3]{\makebox[0pt][l]{\raisebox{#2}{\hspace*{#1}#3}}}

\newcommand{\rz}{I \hskip - 4.0pt R^N}

\def\R{\mathbb{R}}

\def\L{\mathcal{L}}
\def\K{\mathcal{K}}

\def\rz{\mathbb{R}}

\newcommand{\tx}[1]{\mbox{\;{#1}\;}}
\numberwithin{equation}{section}
\begin{document}
\title{On the variety of solutions of 1-dimensional nonlinear eigenvalue problems}
\author[1]{Catherine Bandle}
\author[2]{Simon Stingelin}
\author[3]{Alfred Wagner}
\affil[1]{Department of Mathematics, University of Basel, Switzerland}
\affil[2]{Institute of Applied Mathematics and Physics (IAMP), ZHAW School of Engineering, Switzerland}
\affil[3]{Department of Mathematics, RWTH Aachen, Germany}
\pagestyle{myheadings}
\markright{\sc }
\maketitle
\large
\bigskip

\abstract{Second order nonlinear eigenvalue problems are considered for which
the spectrum is  an interval. The boundary conditions
are of Robin and Dirichlet type. The shape and the number
of solutions are discussed by means of a phase plane
analysis. A new type of asymmetric solutions are discovered. Some
numerical illustrations are given.}
\bigskip

{\bf  Key words}: Nonlinear eigenvalue problems, Robin boundary conditions, symmetric and asymmetric solutions, phase plane analysis.
\bigskip

\section{Introduction}
In this paper we study one-dimensional boundary value problems of the following type
\begin{align}\label{general}
u''(x) + \lambda f(u)=0 \tx{in} (0,L),\quad   \lambda >0
\end{align}
under the boundary conditions
\begin{align}\label{Robin}
u'(0)= \alpha u(0), \quad u'(L)=-\alpha u(L), \: \alpha \in \rz, \: \alpha \ne 0.
\end{align}
The nonlinearity satisfies
\begin{align} \label{conditions}
f(u)>0, \: f'(u)\geq 0 \tx{for all} u\in \rz \tx{and} \lim_{u\to \infty} \frac{f(u)}{u}=\infty.
\end{align}
The higher dimensional version  $\Delta u + \lambda f(u)=0$ in $\Omega \in \R$ , $\frac{\partial u}{\partial \nu}+ \alpha u=0$ on $\partial \Omega$, $\nu$ outer normal and $\alpha >0$
arises in various models, such as combustion, thermal explosions and gravitational equilibrium of polytrop stars. Among the first problems of interest was the celebrated Gelfand problem \cite{Ge} where $f(u)= e^u$.  
The mathematical analysis was carried out in a series of papers see for instance \cite{KeCo}, \cite{HeaWa}, \cite{KeenerKe} and the references cited therein. 

It turns out
that there is a $0<\lambda^*<\infty$ such that the problem is solvable for $\lambda \in (0,\lambda^*)$ and no solution exists if $\lambda >\lambda^*$. The analysis was carried out in \cite{CrRa} and in \cite{Amann} where it was proved that for $\lambda<\lambda^*$ there exist at least two solutions. One of them is the minimal solution which is smaller than any other solution.

The radial solutions in balls have been studied in \cite{JoLu} for Dirichlet and in \cite{BeGaPi} for Robin boundary conditions with $\alpha>0$. Surprisingly the number of solutions depends on the dimension.

The 1-dimensional case with Dirichlet boundary conditions was first considered by Bratu \cite{Br} in the context of integral  equations. 
Some specific properties of this problem are discussed in \cite{Fink+}.
\bigskip

\noindent
So far little attention has  been  paid to  negative $\alpha$. To our knowledge only the eigenvalue problem $\Delta \phi + \lambda \phi=0$ in $\Omega$ with $\frac{\partial \phi}{\partial \nu} +\alpha \phi=0$ on $\partial \Omega$, $\nu$ outer normal, have been taken into consideration. It turns out that the behavior of the lowest eigenvalue differs significantly from the one with positive $\alpha$. For more details we refer to \cite{BaWa} and the references cited therein.
\medskip

\noindent
In this paper we study the various  solutions for both positive and negative $\alpha$. If $\alpha>0$, the solutions are positive and symmetric with repsect to $\frac{L}{2}$ whereas if $\alpha$ is negative,  the solutions have to change sign or are negative and asymmetric solutions appear.  To get as complete a picture as possible of the various solutions  we restrict ourselves to one-dimension. Many results hold in higher dimensions. 
\medskip

Our paper is organized as follows. We first collect some general properties of nonlinear eigenvalue problems. Section 3 is devoted to the phase plane analysis and to the main results. Section 4 contains some numerical results.

\section{Preliminaries}
By means of the Green's function problem \eqref{general} can be transformed into an integral equation. 
The Green's function $g(x,\xi)$ satisfies
$$
g_{xx}(x,\xi)=-\delta_\xi(x), \quad g_x(0,\xi)=\alpha g(0,\xi), \quad g_x(L,\xi)=-\alpha g(L,\xi).
$$
It is of the form
\begin{align}\label{Green}
g(x,\xi)=
\begin{cases}
\frac{1}{\alpha(2+\alpha L)}(1+\alpha L-\alpha \xi)(1+\alpha x)&\tx{if} x<\xi,\\
\frac{1}{\alpha(2+\alpha L)}(1+\alpha \xi)(1+\alpha L-\alpha x)& \tx{if} x\geq \xi.
\end{cases}
\end{align}
For $\alpha >0$ it is strictly positive and exists for all $\alpha$ whereas for $\alpha<0$ it  exists only if $2+\alpha L\neq 0$ and $\alpha \ne 0$. For $\alpha <0$ it may change sign.
The solution of \eqref{general} can be written as
\begin{eqnarray}\label{Greenb}
u(x)&=&\frac{\lambda}{\alpha(2+\alpha L)}\int_0^x(1+\alpha \xi)(1+\alpha L-\alpha x)f(u(\xi))\:d\xi\\
\nonumber&& +\frac{\lambda}{\alpha(2+\alpha L)}\int_x^L(1+\alpha L-\alpha \xi)(1+\alpha x)f(u(\xi))\:d\xi.
\end{eqnarray}
This integral equation was the starting point for the existence proofs derived in \cite{Br}.

\begin{figure}[H]
	\centering
	\begin{subfigure}[t]{0.3\textwidth}
	\centering
	\includegraphics[width=50mm]{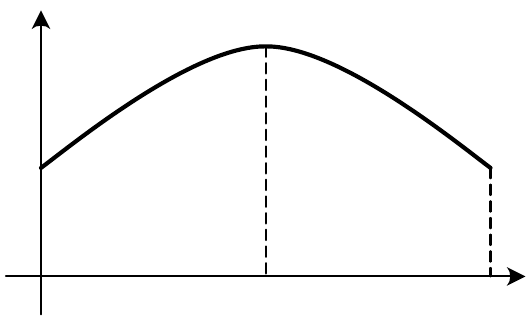}%
	\drawat{-28.mm}{1.5mm}{\scriptsize $L/2$}%
	\drawat{-5mm}{1.5mm}{\scriptsize $L$}%
	\drawat{-2mm}{6mm}{$x$}%
	\drawat{-49.8mm}{28mm}{$u$}%
	
	\caption{$\alpha > 0$, symmetric}
	\end{subfigure}\hspace{.5cm}%
	\begin{subfigure}[t]{0.3\textwidth}
	\centering
	\includegraphics[width=50mm]{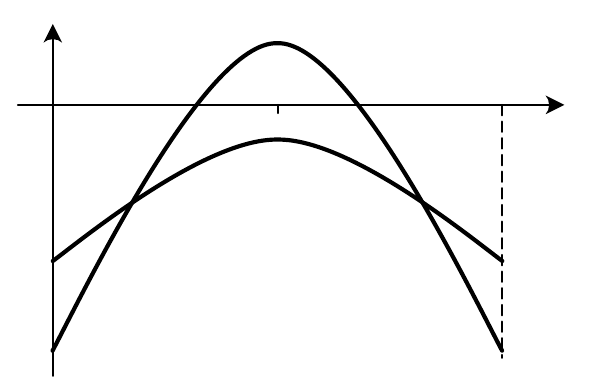}%
	\drawat{-29mm}{25mm}{\scriptsize $L/2$}%
	\drawat{-8mm}{25mm}{\scriptsize $L$}%
	\drawat{-3.5mm}{25mm}{$x$}%
	\drawat{-49mm}{29mm}{$u$}%
	
	\caption{$\alpha < 0$, symmetric}
	\end{subfigure}
	
	\begin{subfigure}[t]{0.3\textwidth}
	\centering
	\includegraphics[width=50mm]{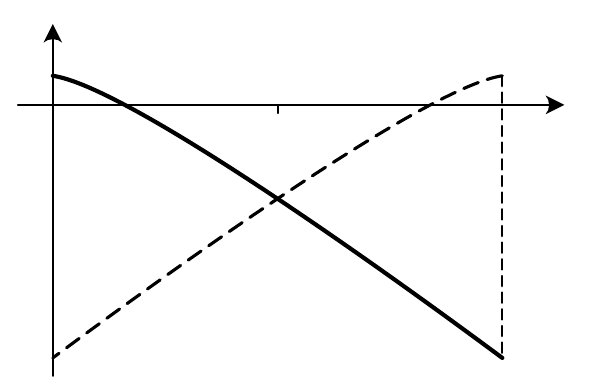}%
	\drawat{-29mm}{25mm}{\scriptsize $L/2$}%
	\drawat{-7mm}{25mm}{\scriptsize $L$}%
	\drawat{-3.5mm}{25mm}{$x$}%
	\drawat{-49mm}{29mm}{$u$}%
	
	\caption{$\alpha < 0$, decreasing, increasing}
	\end{subfigure}\hspace{.5cm}%
	\begin{subfigure}[t]{0.3\textwidth}
	\centering
	\includegraphics[width=50mm]{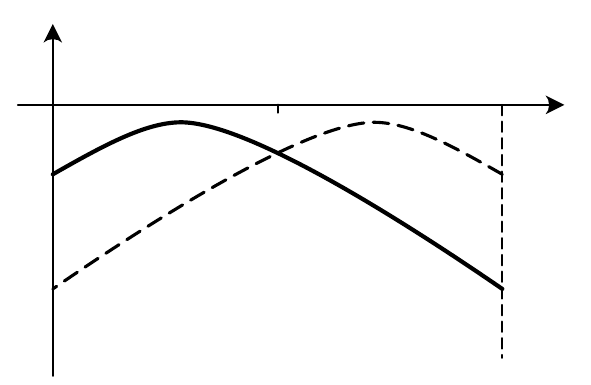}%
	\drawat{-29mm}{25mm}{\scriptsize $L/2$}%
	\drawat{-8mm}{25mm}{\scriptsize $L$}%
	\drawat{-3.5mm}{25mm}{$x$}%
	\drawat{-49mm}{29mm}{$u$}%
	
	\caption{$\alpha < 0$, non-monotone}
	\end{subfigure}%
	
	\caption{\label{fig:TypeOfSolutions}Type of solutions for $\alpha > 0$ and $\alpha < 0$.}
\end{figure}

\noindent
The following lemma follows immediately from the concavity of $u(x)$and from the conditions at the endpoints (see Figure \ref{fig:TypeOfSolutions}).

\begin{lemma} \label{elementary}
\begin{itemize}
\item[(i)] If $\alpha >0$,  
the solutions $u(x)$ of \eqref{general}, \eqref{Robin} are positive and symmetric.

\item[(ii)] If $\alpha<0$,three types of solutions can occur.
\begin{itemize}
\item[1.] $u(x)$ is monotone decreasing such that $u(0)>0$, $u'(0)<0$  and $u(L)<0$, $u'(L)<0$.
\item[2.] $u(x)$ is monotone increasing such that $u(0)<0$, $u'(0)>0$  and $u(L)>0,\:u'(L)>0$.
\item[3.] $u(x)$ is non-monotone such that $u(0)<0$, $u'(0)>0$ and $u(L)<0$, $u'(L)<0$. 
\end{itemize}
\end{itemize}
\end{lemma}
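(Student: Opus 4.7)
The plan is to use the strict concavity $u''=-\lambda f(u)<0$ (which holds throughout $[0,L]$ by hypothesis \eqref{conditions}) together with the boundary conditions \eqref{Robin} to read off the possible sign patterns of $u$ and $u'$, and then to extract the symmetry in case (i) from the standard first integral of the autonomous ODE.

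First I would record the structural consequence of strict concavity: $u'$ is strictly decreasing on $[0,L]$, so $u'$ either keeps one sign on the whole interval or crosses zero exactly once from positive to negative. In particular $u$ can have a strict interior maximum but never a strict interior minimum. I would also note that if $u$ and $u'$ simultaneously vanish at an endpoint, the initial-value problem would force $u\equiv 0$, contradicting $f(0)>0$; this eliminates the degenerate cases and makes all the inequalities below strict.

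For part (i), I would first rule out $u(0)\le 0$: the Robin condition at $0$ gives $u'(0)=\alpha u(0)\le 0$, strict decrease of $u'$ gives $u'(L)<u'(0)\le 0$, hence $u(L)<u(0)\le 0$; but then $u'(L)=-\alpha u(L)\ge 0$, a contradiction. Thus $u(0)>0$, and symmetrically $u(L)>0$, so by concavity $u>0$ on $[0,L]$. For symmetry, I would multiply the ODE by $u'$ to obtain the conserved energy
\begin{equation*}
\tfrac12(u'(x))^2+\lambda F(u(x))=\text{const},\qquad F(t):=\int_0^t f(s)\,ds.
\end{equation*}
Evaluating at $x=0$ and $x=L$ and using \eqref{Robin} yields
$\tfrac12\alpha^2 u(0)^2+\lambda F(u(0))=\tfrac12\alpha^2 u(L)^2+\lambda F(u(L))$. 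The map $t\mapsto \tfrac12\alpha^2 t^2+\lambda F(t)$ has derivative $\alpha^2 t+\lambda f(t)>0$ for $t>0$, hence is strictly increasing on $(0,\infty)$; since $u(0),u(L)>0$, this forces $u(0)=u(L)$ and therefore $u'(L)=-u'(0)$. Setting $v(x):=u(L-x)$ gives another solution of the same autonomous ODE with $v(0)=u(0)$, $v'(0)=u'(0)$, and uniqueness of the IVP produces $v\equiv u$, i.e., the required symmetry about $L/2$.

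For part (ii) with $\alpha<0$, the trichotomy is just the three cases allowed by the monotonicity structure of $u'$. If $u'<0$ on $[0,L]$, then $u'(0)<0$ combined with $u'(0)=\alpha u(0)$ and $\alpha<0$ gives $u(0)>0$, while $u'(L)=-\alpha u(L)<0$ gives $u(L)<0$; this is type~1. Exchanging the roles of the endpoints (or equivalently applying $x\mapsto L-x$) yields type~2. If $u'$ changes sign, it must go from positive at $0$ to negative at $L$ (a strict interior minimum is excluded by concavity), and the same sign reading of \eqref{Robin} gives $u(0)<0$, $u(L)<0$, which is type~3. The main—indeed only—non-elementary point in the whole argument is the symmetry step in (i); the rest is bookkeeping of signs using the Robin conditions and the monotonicity of $u'$.
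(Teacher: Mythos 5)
Your proof is correct in substance and reaches the same conclusions, but the symmetry argument in (i) is genuinely different from the paper's. The paper first locates the unique critical point $d$ of $u$, uses IVP uniqueness to get local symmetry $u(x)=u(2d-x)$, and then shows $d=L/2$ by introducing the Riccati-type function $z=u'/u$, which satisfies $z'+z^2+\lambda f(u)/u=0$ and is therefore strictly decreasing, so it cannot take the value $\alpha$ at two distinct points. You instead evaluate the first integral $\tfrac12(u')^2+\lambda F(u)$ at the two endpoints, use the Robin conditions to reduce this to $\Phi(u(0))=\Phi(u(L))$ with $\Phi(t)=\tfrac12\alpha^2t^2+\lambda F(t)$ strictly increasing on $(0,\infty)$, conclude $u(0)=u(L)$, and finish by reflecting and invoking IVP uniqueness. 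Your route is shorter, avoids the auxiliary function $z$ entirely (and the paper's proof of that step contains a typo, writing $z'(0)=\alpha$ where $z(0)=\alpha$ is meant), and uses only the energy identity that the paper itself exploits later in Remark \ref{integrat}. The positivity argument in (i) and the sign bookkeeping in (ii) are essentially the paper's.

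One genuine error in a side remark: you assert that if $u$ and $u'$ vanish simultaneously at an endpoint then the IVP forces $u\equiv 0$, contradicting $f(0)>0$. This is backwards. The IVP with data $u=u'=0$ has a unique, perfectly good, nontrivial solution (it has $u''=-\lambda f(0)<0$ there and immediately becomes negative); what fails is that $u\equiv 0$ is \emph{not} that solution. So the degenerate endpoint cases are not eliminated by this argument, and in fact the paper's later Table \ref{tab:solutiontypes} exhibits exactly such solutions at $C=s_0$ (trajectories starting or ending at the origin of the phase plane). This does not harm part (i), where your contradiction argument already covers $u(0)=0$, but in part (ii) it means the strict inequalities $u(0)<0$, $u(L)<0$ in types 1--3 hold only for the non-degenerate solutions; the paper's own proof is equally silent on this point.
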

\begin{proof}
(i) Let $\alpha>0$. Since $u$ is concave, the inequality 
\begin{eqnarray}\label{ele1}
u'(L)\leq u'(x)\leq u'(0)
\end{eqnarray}
follows for all $0\leq x\leq L$. We assume that $u'(0)<0$ holds. Then $u'(0)=\alpha u(0)$ implies $u(0)<0$, 
and \eqref{ele1} implies $u'(L)<0$. The boundary condition in $x=L$ yields $u(L)>0$. On the other hand, 
$u'(0)<0$ and \eqref{ele1} imply $u'(x)<0$ for all $x\in[0,L]$. Thus $u$ is decreasing and hence $u(L)<u(0)<0$. 
This is a contradiction. Consequently $u'(0)>0$. The boundary condition implies $u(0)>0$. 
\medskip

\noindent
The boundary condition in $x=L$ implies an opposite sign of $u'(L)$ and $u(L)$. If $u'(L)>0$ then $u'>0$ on $[0,L]$. Then $u(0)>0$ implies $u(L)>0$ which contradicts the boundary condition. Hence $u'(L)<0$ and $u(L)>0$. The concavity then implies the positivity of $u$.
\medskip

\noindent
We will prove the symmetry of $u$ for $\alpha>0$. Since $u'(0)>0$, $u'(L)<0$ and $u$ is concave there exists a unique point $d\in (0,L)$ such that $u'(d)=0$. Note that both functions $u(x)$ and $u(2d-x)$ solve the differential equation $u''+\lambda\,f(u)=0$. Moreover
\begin{eqnarray*}
u(x)\vert_{x=d}=u(2d-x)\vert_{x=d}=u(d)\qquad\hbox{and}\qquad
u'(x)\vert_{x=d}= u'(2d-x)\vert_{x=d}=0.
\end{eqnarray*}
By the uniqueness property for solutions of initial value problem this implies $u(x)=u(2d-x)$ for all $x\in[0,d]$. Without loss of generality we can assume that $2d>L$. Let $x_0$ be chosen such that $2d-x_0=L$. Then $u'(L)=-\alpha\,u(L)$ is equivalent to $u'(x_0)=\alpha\,u(x_0)$. Since $u>0$ in $[0,L]$ we may consider the function
\begin{eqnarray*}
z(x):=\frac{u'(x)}{u(x)}. 
\end{eqnarray*}
Then $u''+\lambda\,f(u)=0$ implies
\begin{eqnarray*}
z'(x)+z^2(x)+\lambda\,\frac{f(u(x))}{u(x)}=0\quad\hbox{for}\quad x\in [0,L].
\end{eqnarray*}
Thus $z'(x)< 0$ on $(0,L]$. Since $z'(0)=\alpha$ and $z'(x_0)=\alpha$ we obtain $x_0=0$. Hence $d=\frac{L}{2}$. This implies the symmetry.
\medskip

\noindent
(ii) The boundary condition in $x=0$ implies: if
$u(0)>0$ then $u'(0)<0$. Since $u'(x)$ is decreasing the first assertion is immediate.
If $u(0)<0$ then $u'(0)>0$ and thus two possibilities can occur. Either $u'(L)>0$ or $u'(L)<0$. This proves the last claims.
\end{proof}
A consequence of Lemma \ref{elementary} (ii) 3.  is the possibility of the existence of symmetrical and asymmetrical solutions .
\medskip

\noindent
The spectrum of the linearized problem
\begin{align}\label{linearized_problem}
\phi'' + \mu f'(u) \phi=0 \tx{in} (0,L),\\
\nonumber  \phi'(0)=\alpha\, \phi(0), \quad \phi'(L)=-\alpha\, \phi(L),
\end{align}
is crucial for the stability of the solutions. If $f'(u(x))$ is a continuous bounded function
there exists a countable number of eigenvalues $\mu_1<\mu_2\leq \mu_3\hdots$. If $\alpha<0$, the lowest eigenvalue is negative.
\medskip

\noindent
We list some important - to a great extent well known - properties of nonlinear eigenvalue problems. 
\begin{lemma}\label{genprop1} Assume $\alpha >0$  and \eqref{conditions}.
\begin{enumerate}
\item If Problem \eqref{general}, \eqref{Robin} has a solution, there exists  a minimal solution which is smaller than any other solution. The minimal solution increases if $\lambda$ increases.\item For the  minimal solution  $0<\lambda \leq\mu_1$. 
\item If Problem \eqref{general}, \eqref{Robin} is solvable,  there  exists a number $0<\lambda^*<\infty$ such that it has a solution for any $0<\lambda \leq\lambda^*$. No solutions exist if $\lambda>\lambda^*$.
\item Under the additional assumption $f''(u)>0$, $ \mu_1\leq \lambda$ for non-minimal solutions. As a consequence non-minimal solutions intersect.
\end{enumerate}
\end{lemma}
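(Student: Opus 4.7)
The plan is to combine sub-/super\-solution iteration for the positive-Green's-function case $\alpha>0$ with spectral testing against the positive first eigenfunction of a suitable linear problem. Items 1 and 3 come out together from the monotone iteration starting at $u_0\equiv 0$,
\[
u_{n+1}(x)=\lambda\int_0^L g(x,\xi)\,f(u_n(\xi))\,d\xi,
\]
which is nondecreasing because $g>0$ (by \eqref{Green} for $\alpha>0$) and $f$ is positive and monotone. If any solution $v$ exists, induction gives $u_n\le v$, so the limit $u_\lambda$ is the minimal solution; any solution at some $\lambda_0$ is itself a supersolution at every $\lambda<\lambda_0$, so the solvability set is an interval $(0,\lambda^*]$ (or $(0,\lambda^*)$) and $u_\lambda$ is nondecreasing in $\lambda$ by the same argument. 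To obtain $\lambda^*<\infty$ I would test $u''+\lambda f(u)=0$ against the positive first eigenfunction $\phi^{(0)}$ of $\phi''+\mu\phi=0$ with Robin data~\eqref{Robin}; the matched boundary conditions make the double integration by parts symmetric and deliver
\[
\lambda\int_0^L f(u)\,\phi^{(0)}\,dx\;=\;\mu^{(0)}\int_0^L u\,\phi^{(0)}\,dx,
\]
which, together with the superlinearity $f(u)/u\to\infty$, forces $\lambda$ to be bounded.

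\medskip

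For item 2 I would differentiate the minimal branch in $\lambda$. The derivative $w:=\partial_\lambda u_\lambda$ solves the inhomogeneous linearized Robin problem
\[
w''+\lambda f'(u_\lambda)\,w=-f(u_\lambda),\qquad w'(0)=\alpha w(0),\;\; w'(L)=-\alpha w(L),
\]
with $w\ge 0$ by item 1. Pairing with the positive first eigenfunction $\phi_1$ of \eqref{linearized_problem} and using the same symmetric integration by parts yields
\[
(\lambda-\mu_1)\int_0^L f'(u_\lambda)\,w\,\phi_1\,dx\;=\;-\int_0^L f(u_\lambda)\,\phi_1\,dx\;<\;0,
\]
and since the left-hand integrand is nonnegative, $\lambda\le\mu_1$. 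Item 4 uses the same mechanism with a secant slope. Fix a non-minimal solution $v$ and let $u$ be the minimal companion, so $w:=v-u\ge 0$; Robin data plus initial-value uniqueness force $w>0$ on $[0,L]$, and $w''+\lambda q(x)w=0$ with $q(x):=(f(v)-f(u))/(v-u)$, so $\lambda$ is the principal eigenvalue of the weighted Robin problem for the weight $q$. Strict convexity $f''>0$ gives $q<f'(v)$ pointwise, and the Rayleigh characterization
\[
\mu_1(q)=\inf_{\phi\not\equiv 0}\frac{\int_0^L(\phi')^2\,dx+\alpha\bigl(\phi(0)^2+\phi(L)^2\bigr)}{\int_0^L q\,\phi^2\,dx}
\]
is decreasing in the weight, so $\mu_1(v)=\mu_1(f'(v))<\mu_1(q)=\lambda$. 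For the intersection statement, if two non-minimal solutions $v_1<v_2$ did not cross, the same comparison applied to $(v_1,v_2)$ would yield $q>f'(v_1)$ somewhere, hence $\mu_1(v_1)>\lambda$, contradicting $\mu_1(v_1)\le\lambda$ just established for $v_1$.

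\medskip

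The main obstacle I foresee is the quantitative step $\lambda^*<\infty$: converting $\lambda=\mu^{(0)}\int u\phi^{(0)}\big/\int f(u)\phi^{(0)}$ into an actual ceiling requires a priori control on where $u$ concentrates. By concavity, $u\ge \tfrac12\max u$ on the middle half of $[0,L]$, which together with the monotonicity of $f$ and $f(u)/u\to\infty$ suffices; alternatively the first integral $\tfrac12(u')^2+\lambda F(u)\equiv\text{const}$ (with $F'=f$) reduces $L/2$ to the quadrature $\int_{u_{\min}}^{u_{\max}}du/\sqrt{2\lambda(F(u_{\max})-F(u))}$ and ties the estimate directly to the phase-plane analysis of the next section.
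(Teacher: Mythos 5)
Your proposal is correct and follows essentially the same Keller--Cohen strategy as the paper: monotone iteration from $u_0\equiv 0$ for items 1 and 3, differentiation of the minimal branch paired against the first eigenfunction for item 2 (the paper phrases this via Barta's inequality, which is the same computation), and a convexity-based eigenvalue comparison for item 4, where your secant weight $q=(f(v)-f(u))/(v-u)$ plays the role of the paper's differential inequality with $f'(U)$. The only loose end you flag --- making $\lambda^*<\infty$ quantitative --- closes immediately as in the paper: conditions \eqref{conditions} give a uniform $\gamma>0$ with $f(u)\ge\gamma u$ for all $u>0$, so your identity $\lambda\int_0^L f(u)\phi^{(0)}\,dx=\mu^{(0)}\int_0^L u\phi^{(0)}\,dx$ yields $\lambda\gamma\le\mu^{(0)}$ with no localization of $u$ needed.
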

\begin{proof}  The proof of the lemma is essentially due to Keller and Cohen \cite{KeCo} who proved these results for problems in higher dimensions.
\medskip

\noindent
1. Let $U(x)$ be any solution of \eqref{general}, \eqref{Robin}. By Lemma \ref{elementary} (1) it is positive. Consider the iteration process
\begin{eqnarray*}
u_0=0, \quad u_n'' + \lambda f(u_{n-1})=0, \quad  u_n'(0)=\alpha u_n(0), \quad u_n(L)=-\alpha u_n(L), \quad n=1,2\dots
\end{eqnarray*}
Since $u_0$ is a lower and $U(x)$ is an upper solution it follows that $u_0\leq u_n\leq u_{n+1} \leq U(x)$
The sequence $\{u_n \}_0^\infty$  is uniformly bounded and therefore $\lim_{n\to \infty} u_n=u(x)\leq U(x)$ where $u(x)$ is the minimal solution.
The minimal solution depends on $\lambda$. From its construction it follows that $u(x:\lambda_1)>u(x:\lambda_2)$ if $\lambda_1>\lambda_2$. 
\medskip

\noindent
2. The function $v(x):= \frac{\partial u(x:\lambda)}{\partial \lambda}$ is positive and satisfies
\begin{eqnarray*}
v_{xx} + \lambda f'(u)v + f(u)= 0 \tx{in} (0,L), \quad v_x(0)= \alpha v(0), \: v_x(L)=-\alpha v(L).
\end{eqnarray*}
By Barta's inequality \cite{PrWe}
\begin{eqnarray*}
\mu_1 \geq \min\left\{ -\frac{v_{xx}}{f'(u)v}\right \} > \lambda. 
\end{eqnarray*}
\medskip

\noindent
3. Since $f$ is superlinear and $f(0)>0$, there exits a positive number $\gamma$ such that $f(u)\geq \gamma u$ for all $u>0$. Hence 
\begin{eqnarray*}
0=u''(x) +\lambda f(u)\geq u'' +\lambda \gamma u.
\end{eqnarray*}
By Barta's inequality $\lambda \gamma \leq \kappa$,
where $\kappa>0$ is the lowest eigenvalue of 
\begin{eqnarray*}
\psi''+ \kappa \psi=0, \quad \psi'0)=\alpha \psi(0), \quad \psi'(L)=-\alpha \psi(L).
\end{eqnarray*}
4. Let $U$ be a non-minimal and $u$ the minimal solution. By the convexity of $f$
\begin{eqnarray*}
(U-u)'' + \lambda f'(U)(U-u)\geq 0,
\end{eqnarray*}
with boundary conditions
\begin{eqnarray*}
 (U-u)(0)=\alpha (U-u)(0)\quad \tx{and}\quad (U-u)(L)=-\alpha (U-u)(L).
\end{eqnarray*}
 Testing with $d:=U-u$ yields
 $$
 \lambda \geq \frac{\int_0^L (d')^2\:dx + \alpha (d^2(0)+d^2(L))}{\int_0^Lf'(U)d^2\:dx}.
 $$
 By the Rayleigh principle the expression at the right-hand side is bounded from below by $\mu_1$. Hence $\mu_1\leq \lambda$. Equality holds only if $d$ is the first eigenfunction. This is impossible by $f''(u)>0$.
\medskip

\noindent
 Suppose that $U_1\leq U_2$ are two non-minimal solutions which don't intersect. Then by the convexity of $f$
 $$
 (U_2-U_1)'' + \lambda f'(U_1)(U_2-U_1) \leq 0.
 $$
By Barta's inequality $\mu_1(U_1)\geq \lambda $. This is a contradiction to the previous result.
\end{proof}
Next we discuss problems  with negative $\alpha$. In contrast to positive $\alpha$ there appear also asymmetric solutions.
\begin{lemma} \label{alpha<0} Assume $\alpha<0$ and \eqref{conditions}.
Then
\begin{itemize}
\item[1.] For any solution the lowest eigenvalue of the linearized problem \eqref{linearized_problem} satisfies $\mu_1 <0<\lambda$.
\item[2.] Different solutions of Problem \eqref{general}, \eqref{Robin} intersect.
\item[3.] No minimal solution exists.
\item[4.] If $L=-\frac{\alpha}{2}$ the Green's function doesn't exists. Any solution of problem \eqref{general}, \eqref{Robin} with
$L =-\frac{\alpha}{2}$ must satisfy the compatibility condition
$$
\int_0^L (x-\frac{L}{2} )f(u(x))\:dx=0.
$$
\end{itemize}
\end{lemma}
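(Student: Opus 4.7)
The plan is to handle the four claims in order, as (1) and (2) provide the tools for (3) and (4) is essentially a Fredholm alternative computation.

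For (1), I would use the Rayleigh quotient characterization
\[
\mu_1 \;=\; \inf_{\phi\not\equiv 0}\frac{\int_0^L(\phi')^2\,dx+\alpha(\phi^2(0)+\phi^2(L))}{\int_0^L f'(u)\phi^2\,dx},
\]
which follows from multiplying the linearized equation by $\phi$ and integrating the Robin terms. Plugging in the test function $\phi\equiv 1$ gives numerator $2\alpha<0$ and strictly positive denominator (since $f'(u)\ge 0$ and $f(u)>0$ forces $f'(u)>0$ on a set of positive measure via the superlinearity assumption, or at worst one can replace it with a slightly non-constant test function). The inequality $\lambda>0$ is part of the hypothesis.

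For (2), I would suppose two solutions $U_1\le U_2$ exist with $U_1\not\equiv U_2$ and derive a contradiction. Set $w:=U_2-U_1\ge 0$. Monotonicity of $f$ yields
\[
w''=-\lambda\bigl(f(U_2)-f(U_1)\bigr)\le 0,
\]
so $w$ is concave and therefore $w'$ is non-increasing. The Robin conditions give $w'(0)=\alpha w(0)\le 0$ and $w'(L)=-\alpha w(L)\ge 0$ (recall $\alpha<0$, $w\ge 0$). Combined with $w'(0)\ge w'(L)$ from concavity, this forces $\alpha(w(0)+w(L))\ge 0$, hence $w(0)=w(L)=0$, hence $w'(0)=w'(L)=0$. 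A non-increasing $w'$ starting and ending at $0$ is identically $0$, so $w\equiv 0$, contradicting $U_1\not\equiv U_2$. Therefore any two distinct solutions must cross.

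Claim (3) is then an immediate consequence: a minimal solution would by definition lie pointwise below every other solution, but (2) rules this out as soon as there is more than one solution. For (4), I would apply the Fredholm-type test function $\psi(x):=x-L/2$, which satisfies the formally adjoint homogeneous problem in the critical case $2+\alpha L=0$. Multiplying $u''+\lambda f(u)=0$ by $\psi$ and integrating by parts twice,
\[
\int_0^L (x-L/2)\,u''\,dx = \bigl[(x-L/2)u'-u\bigr]_0^L = \tfrac{L}{2}\bigl(u'(L)+u'(0)\bigr)-\bigl(u(L)-u(0)\bigr),
\]
and substituting the Robin boundary conditions collapses this to $\bigl(1+\tfrac{\alpha L}{2}\bigr)(u(0)-u(L))$, which vanishes precisely when $2+\alpha L=0$. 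The compatibility condition follows since $\lambda>0$. The main subtlety I foresee is being careful in step (1) that $f'(u)\not\equiv 0$, so that $\mu_1$ is actually a finite negative number rather than $-\infty$ or undefined; the conditions \eqref{conditions} together with non-triviality of $u$ take care of this.
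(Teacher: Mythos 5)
Your proposal is correct and follows essentially the same route as the paper: the constant test function in the Rayleigh quotient for claim 1, the concavity contradiction for the ordered difference in claim 2 (which you carry out in more detail, handling the degenerate endpoint cases the paper's terse ``impossible by concavity'' glosses over), claim 3 as an immediate corollary, and testing against $x-\tfrac{L}{2}$ for claim 4. The only substantive addition is your observation that $x-\tfrac{L}{2}$ solves the homogeneous adjoint problem in the critical case, which is a cleaner justification for the non-existence of the Green's function than the paper's appeal to the explicit formula.
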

\begin{proof}
\begin{itemize}
\item[1.]  Note that by  Rayleigh's principle
$$
\mu_1= \min_{\psi}\frac{\int_0^L \psi'^2\:dx + \alpha(\psi(0) +\psi(L))}{\int_0^Lf'(u)\psi^2 dx}.
$$
If we set $\psi =$ const., it follows that $\mu_1<0$.

\item[2.]  Suppose that there exist two solutions $U_2(x)\geq U_1(x)$. Then the difference $d=U_2-U_1$ is positive, concave
and satisfies the boundary conditions $d'(0)=\alpha d(0)<0$ and $d'(L)=-\alpha d(L)>0$.  This is impossible by the concavity of $d$. Hence $U_1$ and $U_2$ intersect. 
\item[3.] This is an immediate consequence of 2.
\item[4.]  The first  claim follows from \eqref{Green}.  Testing problem \eqref{general} with $\phi= x-\frac{L}{2}$ establishes the second claim.
\end{itemize}
\end{proof}
\section{Phase plane analysis}

Set
\begin{align*}
v(x)=\frac{u'(x)}{\sqrt{\lambda}} .
\end{align*}
Then the differential equation $u''+\lambda\,f(u)=0$ in \eqref{general} is transformed into a system of first order odes:
\begin{align}\label{P1}
v'=-\sqrt{\lambda} f(u), \quad u'=\sqrt{\lambda} v.
\end{align}
In a first step we analyze solutions of this system without taking the boundary conditions into account.
\medskip

\noindent
We define
\begin{align}\label{defF}
F(u)=
\begin{cases}
\int_{-\infty}^u\: f(t) dt & \tx{if $f$ is integrable at $-\infty$}, \\
\int_0 ^u\:f(t)dt & \tx{otherwise.}
\end{cases}
\end{align}
Then \eqref{P1} leads to the first order ode which can be integrated.
\begin{align}\label{phase1}
\frac{du}{dv} =- \frac{v}{f(u)} \Longleftrightarrow F(u)= C -\frac{v^2}{2}
\end{align}
and $F'(t)=f(t)$.
\medskip

\noindent
Since $f$ is positive, $F$ is strictly increasing and hence $F^{-1}$ exists. In view of \eqref{defF} we have:
$F^{-1}:\rz^+ \to \rz$ in the first case and $F^{-1}:\rz^+ \to \rz^+$ in the second case. As a consequence
\begin{eqnarray}\label{usol}
u=F^{-1}\left(C-\frac{v^2}{2}\right)\quad\hbox{for}\quad C>\frac{v^2}{2}.
\end{eqnarray}
From \eqref{defF} we also deduce
\begin{align*}
F(0)=
\begin{cases}
s_0:=\int_{-\infty}^0\: f(t) dt & \tx{if $f$ is integrable at $-\infty$}, \\
0 & \tx{otherwise.}
\end{cases}
\end{align*}
Hence $F^{-1}(s_0)=0$ in the first case and $F^{-1}(0)=0$ in the second case.
\begin{remark}\label{integrat}
Alternatively, equation \eqref{general} can be reduced to a first order differential equation, by multiplying $u''(x)+\lambda\,f(u(x))=0$ with $u'(x)$ and then by integrating
\begin{eqnarray}\label{P2}
\frac{du}{dx}=\sqrt{\lambda }v =\pm \sqrt{\lambda}\sqrt{2(C-F(u))}.
\end{eqnarray}
This ode can be integrated and gives an implicit formula for the solution $u$.
\end{remark}
\eqref{usol} implies that we can be represent the solutions in the $(v,u)$ plane as trajectories on the curves
$$
\mathcal{K}_C(v):=(v,u), \tx{where} u=F^{-1}(C -\frac{v^2}{2}),    \quad C>0.
$$
\begin{lemma}\label{KC}
Assume \eqref{conditions}. The function
\begin{eqnarray*}
v\to F^{-1}\left(C-\frac{v^2}{2}\right)
\end{eqnarray*}
is concave and symmetric with respect to the $u$ - axis. It is bounded from above and takes its maximum at $v=0$. Moreover the curve $\mathcal{K}_{C_1}(v)$ is below $\mathcal{K}_{C_2}(v)$  for $C_1<C_2$ and $\mathcal{K}_{C_1}\cap \mathcal{K}_{C_2} = \emptyset$
\end{lemma}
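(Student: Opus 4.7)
The plan is to verify the four claims in turn, using only monotonicity of $F$ and the sign properties of $f$ and $f'$ from \eqref{conditions}. Throughout I will write $u(v):=F^{-1}\!\left(C-\tfrac{v^2}{2}\right)$ on the natural domain where the argument of $F^{-1}$ lies in the range of $F$.

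First I would dispatch symmetry and the maximum claim at once. Symmetry in $v$ is immediate from the fact that $v^2$ is even. Since $F'=f>0$, the function $F$ is strictly increasing, so $F^{-1}$ is strictly increasing on its domain. Because $C-v^2/2$ attains its unique maximum at $v=0$, so does $u(v)$, and $\max u=F^{-1}(C)<\infty$; this gives both the upper bound and the location of the maximum.

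The concavity is the only step requiring actual calculation. I would differentiate the identity $F(u(v))=C-v^2/2$ implicitly: $f(u)\,u'=-v$, hence $u'=-v/f(u)$. Differentiating once more and substituting this expression for $u'$ yields
\begin{equation*}
u''(v)=-\frac{1}{f(u)}-\frac{v^{2}\,f'(u)}{f(u)^{3}}.
\end{equation*}
Since $f>0$ and $f'\ge 0$ by \eqref{conditions}, both terms are nonpositive, so $u''\le 0$ on the whole domain, which is the desired concavity. (In fact the inequality is strict because of the first term.)

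Finally, for the ordering of the curves, if $C_1<C_2$ then $C_1-v^2/2<C_2-v^2/2$ pointwise in $v$, and strict monotonicity of $F^{-1}$ immediately gives $\mathcal{K}_{C_1}(v)<\mathcal{K}_{C_2}(v)$ wherever both are defined, so $\mathcal{K}_{C_1}\cap\mathcal{K}_{C_2}=\emptyset$. I do not expect any serious obstacle; the only point that needs a touch of care is making sure $u(v)$ is actually differentiable twice at every $v$ in the interior of its domain, which follows from $f>0$ via the implicit function theorem, and being explicit about whether the domain is all of $\mathbb{R}$ or the interval $\{v:C-v^2/2>0\}$ depending on the two cases of \eqref{defF}.
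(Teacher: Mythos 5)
Your proof is correct and essentially identical to the paper's: the same implicit differentiation yielding $u''(v)=-\frac{1}{f(u)}-\frac{v^{2}f'(u)}{f(u)^{3}}\le 0$, the same use of evenness of $v^2$ for symmetry and of the strict monotonicity of $F^{-1}$ for the maximum at $v=0$ and the ordering/disjointness of the curves $\mathcal{K}_{C}$. Your added remarks about the domain of $u(v)$ and twice-differentiability are a slight refinement the paper leaves implicit, but they do not change the argument.
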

\begin{proof}
A straightforward calculation gives
\begin{eqnarray*}
\frac{d}{dv}F^{-1}\left(C-\frac{v^2}{2}\right)=-\,\frac{v}{F'\left(F^{-1}\left(C-\frac{v^2}{2}\right)\right)}
\end{eqnarray*}
and
\begin{eqnarray*}
\frac{d^2}{dv^2}F^{-1}\left(C-\frac{v^2}{2}\right)=
-
\,\frac{v^2\,F''\left(F^{-1}\left(C-\frac{v^2}{2}\right)\right)}{F^{'3}\left(F^{-1}\left(C-\frac{v^2}{2}\right)\right)}
-
\,\frac{1}{F'\left(F^{-1}\left(C-\frac{v^2}{2}\right)\right)}=
-\frac{v^2 f'(u)}{f^3(u)}-\frac{1}{f(u)}.
\end{eqnarray*}

Since $f>0$ and $f'\geq 0$ the concavity is shown.
\medskip

\noindent
Clearly the function $v\to F^{-1}\left(C-\frac{v^2}{2}\right)$ has a maximum in $v=0$ and is symmetric with respect to reflection $v\to -v$.

\noindent
The last statement in the lemma follows from the strict monotonicity of $F^{-1}$
\end{proof}
Figure \ref{fig:Figure1bMod} is typical for the class of nonlinearities considered in this paper and shows $\mathcal{K}_{C}$ for different values of $C$. 
\begin{figure}[H]
\centering
\includegraphics[width=80mm]{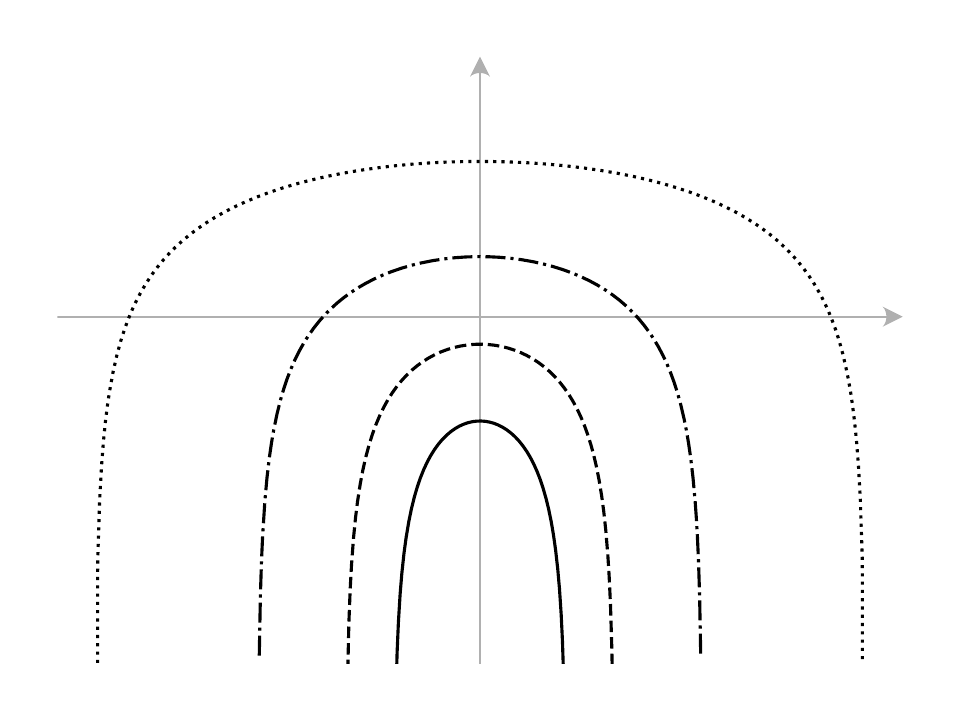}%
\drawat{-43.5mm}{53.5mm}{\small $u$}%
\drawat{-6.7mm}{35mm}{\small $v$}%

\caption{\label{fig:Figure1bMod}$\mathcal{K}_{C}$ for different values of $C$.}
\end{figure}


\noindent 
Next we introduce the boundary conditions in $x=0$ and $x=L$. Since $u'(0)=\alpha\,u(0)$ and $u'(L)=-\alpha\,u(L)$ we obtain
\begin{eqnarray}\label{bc}
v(0)=\frac{\alpha}{\sqrt{\lambda}}\,u(0)\quad\hbox{and}\quad
v(L)=-\frac{\alpha}{\sqrt{\lambda}}\,u(L)
\end{eqnarray}
Set
\begin{eqnarray*}
\mathcal{L}^+(v):=(v,\gamma^{*}v) \quad \tx{and}  \quad\mathcal{L}^-(v):=(v,-\gamma^{*} v) \quad \tx{where}  \quad \gamma^{*}:= \frac{\sqrt{\lambda}}{\alpha}.
\end{eqnarray*}
Then
\begin{eqnarray*}
(v(0),u(0))\in \mathcal{L}^{+} \quad\tx{and}\quad (v(L),u(L))\in \mathcal{L}^{-},
\end{eqnarray*}
see Figure \ref{fig:PhasePlane}.

Then another way to write \eqref{bc} is
\begin{align}\label{boundary}
u(0)=\gamma^{*}F^{-1}\left(C-\frac{1}{2}\left(\frac{u(0)}{\gamma^{*}}\right)^2\right), \quad  u(L)=-\gamma^{*}F^{-1}\left(C-\frac{1}{2}\left(\frac{u(L)}{\gamma^{*}}\right)^2\right).\\
\nonumber v(0)=\frac{1}{\gamma^{*}} F^{-1}\left(C-\frac{v^2(0)}{2}\right), \quad v(L)=-\frac{1}{\gamma^{*}} F^{-1}\left(C-\frac{v^2(L)}{2}\right).
\end{align}
\begin{figure}[H]
	\centering
	\begin{subfigure}{0.4\textwidth}\centering
	\includegraphics[width=80mm]{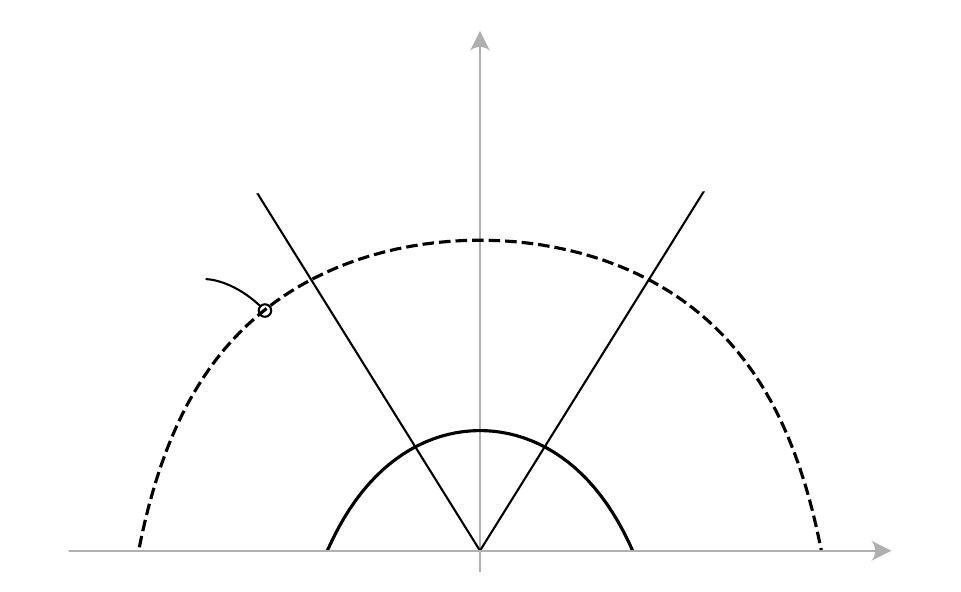}%
	\drawat{-60mm}{35mm}{$\mathcal{L}^-$}%
	\drawat{-22mm}{35mm}{$\mathcal{L}^+$}%
	\drawat{-74.mm}{27.5mm}{$\mathcal{K}_C(v)$}%
	\drawat{-44mm}{45mm}{$u$}%
	\drawat{-8mm}{5mm}{$v$}%
	
	\caption{$\alpha > 0$}
	\end{subfigure}%
	\hspace{1cm}
	\begin{subfigure}{0.4\textwidth}\centering
	\includegraphics[width=80mm]{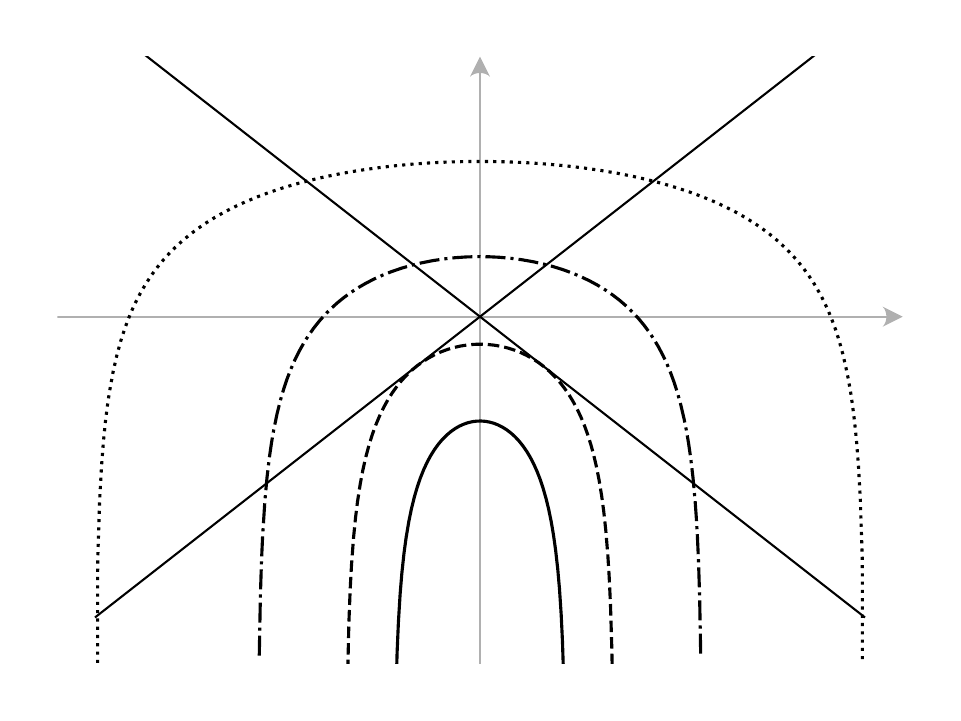}%
	\drawat{-60mm}{50mm}{$\mathcal{L}^+$}%
	\drawat{-23mm}{50mm}{$\mathcal{L}^-$}%
	\drawat{-44mm}{54mm}{$u$}%
	\drawat{-6mm}{30.5mm}{$v$}%
	
	\caption{$\alpha < 0$}
	\end{subfigure}
	
	\caption{\label{fig:PhasePlane}Phase plane.}
	
\end{figure}
In the case $\alpha>0$ Lemma \ref{genprop1} states that the solution $u$ is positive while for $\alpha<0$ either it changes sign or it is negative.
\medskip

\noindent
At this stage any intersection point of $\L^{\pm}$ with $\K_{C}$ corresponds to the solution which satisfies the 
Robin boundary condition in this intersection point. If $\alpha <0$, such points do not exist for all $C<\tilde C$. If they  exist, the trajectory between these intersection points corresponds to the solution to our problem. The intersection points of $\mathcal{L}^+$
 correspond to the left boundary values and intersection points of $\mathcal{L}^-$  correspond to the right boundary values. 
\medskip

\noindent
Let $P^{\pm}$ be the intersection points of $\L^{\pm}$ with $\K_{C}$.  The solution $\left(\frac{u'(x)}{\sqrt{\lambda}},u(x)\right)$ of \eqref{general} with $x\in[0,L=L(C)]$ corresponds to a trajectory on $\K_{C}$ between an intersection point $P^+$ and an intersection point $P^-$. This direction follows from  the concavity of $u(x)$. Indeed $v(x)$ decreases as $x$ increases. 
\medskip

\noindent
This part of the trajectory will be denoted with $\widehat{P^+P^-}$. The length $L(C)$ of the corresponding interval is then implicitly given by integration of \eqref{P1} and by \eqref{usol}. Since $v(0)>v(L(C))$
\begin{eqnarray}\label{length2}
L(C)=\frac{1}{\sqrt{\lambda}}\int_{v(L(C))}^{v(0)}\frac{dv}{f\left(F^{-1}\left(C-\frac{v^2}{2}\right)\right)}.
\end{eqnarray}
This length depends on $\alpha, \:\lambda$ and $C$. These
parameters are determined implicitly by $v(0)$ and $v(L)$ as follows, s. \eqref{boundary},
$$
C= F(\gamma^*v(0))+\frac{v(0)^2}{2}, \quad C=F(-\gamma^*v(L))+ \frac{v(L)^2}{2}, \quad \gamma^*=\frac{\sqrt{\lambda}}{\alpha}.
$$
\subsection{Trajectories in the phase plane for the Dirichlet problem}
In this subsection we discuss problem \eqref{general} with $u(0)=u(L)=0$ in the phase plane. Recall that its solutions if they exist, are positive. We set $ F(u)=\int_0^uf(s)ds$. Consequently $s_0=0$.The solutions are represented by the trajectories on $\mathcal{K}_C$ starting at $(\sqrt{2C}, 0)$ and ending at at $(-\sqrt{2C}, 0)$. The length is given by \eqref{length2}. In view of the symmetry of $u(x)$ with respect to $L/2$ it can be written in the form
$$
\frac{\sqrt{\lambda}}{2}L(C)= \int_0^{\sqrt{2C}}\frac{dv}{f(F^{-1}(C-\frac{v^2}{2}))}.
$$
The change of variable $t\sqrt{2C}=v$ leads to
\begin{align}\label{lengthDirichlet}
\frac{\sqrt{\lambda}}{2}L(C)= \int_0^1\frac{\sqrt{2C}\:dt}{f(F^{-1}(C(1-t^2)))}.
\end{align}
Let us now differentiate this expression with respect to $C$. Then, keeping in mind $u=F^{-1}(C(1-t^2))$, we obtain
\begin{align*}
\frac{d}{dC}\left ( \frac{\sqrt{\lambda}}{2} L(C)\right)&= \int_0^1\left\{ \frac{1}{\sqrt{2C}f(u)}-\frac{\sqrt{2C} f'(u)}{f^3(u)}(1-t^2)\right\}\,dt\\
&= \int_0^1\frac{1}{\sqrt{2C}f^3(u)}\{ f^2(u)-2f'(u)F((u)\}\,dt.
\end{align*}
The sign of $dL(C)/dC$ depends on $g(u):= f^2(u)-2f'(u)F(u)$, $u>0$. We have $g(0)= f^2(0)>0$ and $g''(u)=-2f''(u)F(u)$. Consequently
\begin{theorem} \label{g(u)}
(i) If $g(u)>0$ in $(0,\infty)$, $L(C)$ is monotone. In this case \eqref{general} with Dirichlet boundary conditions has at most one solution.

(ii) If $f''(u)>0$, \eqref{general} with Dirichlet boundary conditions has at most two solutions. The solutions are ordered.
\end{theorem}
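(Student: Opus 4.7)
Part (i) will follow directly from the derivative identity for $L(C)$ displayed just before the theorem: under the assumption $g(u)>0$ on $(0,\infty)$ the integrand is strictly positive, so $C\mapsto L(C)$ is strictly increasing. Since each Dirichlet solution is symmetric about $L/2$ with maximum $u(L/2)=F^{-1}(C)$ (as exploited in the derivation of \eqref{lengthDirichlet}), the value $C$ is an invariant of the solution; strict monotonicity of $L(C)$ then forces the $C$ matching any prescribed length $L$ to be unique, and hence the solution to be unique.

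For part (ii), the starting point is that $f''(u)>0$ together with $F(u)>0$ for $u>0$ gives $g''(u)=-2f''(u)F(u)<0$, so $g$ is strictly concave on $(0,\infty)$. Since $g(0)=f^2(0)>0$, a strictly concave function that is positive at $0$ has at most one zero on $(0,\infty)$. If $g$ stays positive, part (i) already yields at most one solution, so I may assume $g>0$ on $(0,u^*)$ and $g<0$ on $(u^*,\infty)$ for a unique $u^*>0$.

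To establish the ordering, I will fix two energies $C_1<C_2$ and argue directly in the phase plane. By Lemma \ref{KC} the curves $\mathcal{K}_{C_1}$ and $\mathcal{K}_{C_2}$ are disjoint, and the corresponding Dirichlet solutions satisfy $u_i(0)=0$ with $u_i'(0)=\sqrt{2\lambda C_i}$, so $u_1'(0)<u_2'(0)$. A putative first interior crossing at $x_0\in(0,L/2)$ would force $u_2'(x_0)\le u_1'(x_0)$, but on the ascending branch the identity $u_i'(x_0)=\sqrt{2\lambda(C_i-F(u))}$ at the common value $u=u_1(x_0)=u_2(x_0)$ gives the reverse inequality. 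Hence $u_1<u_2$ on $(0,L/2]$, and symmetry about $L/2$ extends the ordering to all of $(0,L)$.

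For the ``at most two'' bound I will adapt the Barta-type argument in the proof of Lemma \ref{genprop1}(4) to Dirichlet boundary conditions: the boundary terms $\alpha(d^2(0)+d^2(L))$ simply vanish, and the Rayleigh calculation then yields $\mu_1(U)<\lambda$ (strict, by $f''>0$) for every non-minimal Dirichlet solution $U$. Assuming for contradiction three ordered solutions $u_1<u_2<u_3$: since the minimal solution is unique, $u_2$ is non-minimal, and $w:=u_3-u_2>0$ satisfies $-w''=\lambda f'(\xi)w>\lambda f'(u_2)w$ with $w(0)=w(L)=0$; testing against the positive first eigenfunction of the $u_2$-linearization and integrating by parts produces $\mu_1(u_2)>\lambda$, contradicting $\mu_1(u_2)<\lambda$. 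The main obstacle is the careful transcription of the Lemma \ref{genprop1} machinery (existence of a minimal solution, strict Barta inequality) to the Dirichlet setting after the Robin boundary contribution is removed.
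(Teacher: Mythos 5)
Your proposal is correct. Part (i) coincides with the paper's (implicit) argument: positivity of $g$ along the trajectory makes the integrand in the displayed formula for $\frac{d}{dC}L(C)$ positive, so $L$ is strictly increasing, each prescribed length is attained by at most one $C$, and each $C$ carries exactly one Dirichlet trajectory on $\mathcal{K}_C$. For part (ii) you take a genuinely different route from what the paper intends. The paper's route is: $f''>0$ gives $g''=-2f''F<0$, so the concave function $g$ with $g(0)=f^2(0)>0$ changes sign at most once, and one is meant to conclude that $L(C)$ has at most one critical point, whence $L(C)=L_0$ has at most two roots, ordered by $C$; note that passing from ``$g$ changes sign once in $u$'' to ``$L'(C)$ changes sign once in $C$'' is not immediate, since the integrand averages $g$ over all $u\in(0,F^{-1}(C))$, and the required Laetsch-type monotonicity argument is not supplied in the paper. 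You instead establish the ordering directly by a first-crossing comparison using $u_i'=\sqrt{2\lambda(C_i-F(u_i))}$ on the ascending branch, and then exclude three ordered solutions by transposing the Barta/Rayleigh argument of Lemma \ref{genprop1}(4) to Dirichlet conditions, where the boundary contributions vanish; this is exactly the mechanism the paper itself uses to prove Theorem \ref{Theo1} in the Robin case, so it stays within the paper's toolkit while avoiding the unproved step about the critical points of $L(C)$ --- at the cost of not actually exploiting the concavity of $g$ that the hypothesis $f''>0$ was set up to provide. One small simplification: in your Rayleigh step you only need $u_2$ to dominate \emph{some} smaller solution $u_1$ (so that $\mu_1(u_2)<\lambda$), not the minimal solution, so existence and uniqueness of a minimal Dirichlet solution need not be invoked.
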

\begin{example} 
Let $f(u)=e^u$. Then $F(u)= e^u-1$ and
$$
g(u)= e^u(2- e^u), \quad \frac{\sqrt{\lambda}}{2}L(C)= \int_0^{\sqrt{2C}}\frac{dv}{C+1-\frac{v^2}{2}}=\sqrt{\frac{2}{C+1}}\arctanh\sqrt{\frac{C}{C+1}}.
$$
The concavity of  $g(u)$ and $g(0)=1$ imply that $L(C)$ has only one critical point. Moreover $L(0)=L(\infty)=0$.
Recall that $u_{\max}=\ln(C+1)$.
\medskip

\noindent
Numerical computations indicate that there are non convex nonlinearities for which more solutions exist (see Section 4).
\end{example}
\subsection{Trajectories  in the phase plane for $\alpha>0$}
By Lemma \ref{elementary} only positive symmetric solutions of problem \eqref{general}.  Since $u>0$ for $\alpha>0$ we can choose, s. \eqref{defF} 
\begin{eqnarray*}
F(u)=\int_{0}^{u}f(t)\:dt,
\end{eqnarray*}
and $F^{-1}:\rz^+ \to \rz^+$ with $F^{-1}(0)=0$. Then $C\in (0, \infty)$ and $\mathcal{K}_0= (0,0)$. 
\medskip

\noindent
Let  $P^+=(v_1,u_1)$, $v_1,u_1>0$   be the intersection point of $\mathcal{L}^+$ with $\mathcal{K}_C$. Accordingly $P^-=(-v_1,u_1)$ is the intersection point of $\mathcal{L}^-$ with $\mathcal{K}_C$. 
 \medskip

\noindent
In the phase plane the  solutions of \eqref{general}  are represented by a trajectory on $\mathcal{K}_C(v)$
starting at $P^+=(v_1,u_1)\in \mathcal{L}+$ and ending at $P^-=(-v_1,u_1)\in \mathcal{L}^-$. Note that $v_1$ and $u_1$ depend (smoothly) on $C$.
\medskip

\noindent
Since $u$ is symmetric and smooth we have
\begin{eqnarray*}
v\left(\frac{L}{2}\right)=\frac{1}{\sqrt{\lambda}}u'\left(\frac{L}{2}\right)=0
\end{eqnarray*}
Hence by the symmetry and \eqref{length2}
\begin{eqnarray}\label{lambda1}
L=L(C)=2\int_{0}^{v(0)}\frac{dv}{\sqrt{\lambda}f\left(F^{-1}\left(C-\frac{v^2}{2}\right)\right)}.
\end{eqnarray}

\noindent
In the sequel we replace $v(0)$ by $v_1$ and determine $L(C)$ at $C=0$ and $C=\infty$. Clearly
 \begin{align} \label{C1}
L(0)=\lim_{C\to 0}2 \int _0^{v_1}\frac{dv}{\sqrt{\lambda}f\left(F^{-1}\left(C-\frac{v^2}{2}\right)\right)}=0.
\end{align}
If $v\in (0,v_1)$, then $u\geq \gamma^{*}v_1$. Since $f(u)$ is monotone increasing
\begin{eqnarray*}
\int _0^{v_1}\frac{dv}{\sqrt{\lambda}f(F^{-1}(C-\frac{v^2}{2}))}= \int _0^{v_1}\frac{dv}{\sqrt{\lambda}f(u)}\leq \frac{v_1}{\sqrt{\lambda}f(\gamma^{*}v_1)}=\frac{\alpha}{\lambda}\frac{ \gamma^{*}v_1}{ f(\gamma^{*}v_1)}.
\end{eqnarray*}
Differentiation of the boundary condition \eqref{boundary}, $\gamma^{*}v_1=u_1=F^{-1}(C-\frac{v_1^2}{2})$, or equivalently $F(\gamma^{*}v_1)+\frac{v_1^2}{2}=C$ with respect to $C$, implies
$v_1'(C)(\gamma^*f(\gamma^{*}v_1) +v_1)=1$. Thus $v_1(C)$ is monotone increasing.  From $
F(\gamma^{*}v_1)+\frac{v_1^2}{2}=C$ it follows that $v_1 \to \infty$ as $C\to \infty$.
\medskip

\noindent
The superlinearity  of $f$, s. \eqref{conditions}, implies
\begin{align}\label{C2}
L(\infty)=\lim_{C\to \infty} 2\int _0^{v_1}\frac{dv}{\sqrt{\lambda}f(F^{-1}(C-\frac{v^2}{2}))}=0.
\end{align}
We are now in position to establish the following result.
\begin{theorem}\label{Theo1}
Assume \eqref{conditions} and $f''(u)>0$. Let $\alpha$ and $\lambda$ be given positive numbers.  Then
$L(C)$ is bounded in $\mathbb{R}^+$. It satisfies  $L(0)=0$ and $L(C)\to 0$ as $C\to \infty$ and has exactly one critical point in $\mathbb{R}^+$. 
\end{theorem}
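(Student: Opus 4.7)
The plan is to separate the statement into two parts: (i) boundedness together with the endpoint limits $L(0)=0$ and $L(\infty)=0$, and (ii) uniqueness of the critical point. Part (i) is essentially at hand: equations \eqref{C1} and \eqref{C2} supply the limits, and $L(C)$ is smooth on $(0,\infty)$ via the implicit definition of $v_1(C)$, so continuity plus vanishing at both endpoints yields boundedness on $\mathbb{R}^+$ by compactness. The substantive content is part (ii).

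For part (ii) the strategy is to mimic the Dirichlet time-map analysis that produced Theorem \ref{g(u)}. Starting from \eqref{lambda1} and using $v_1^{2}/2=C-F(u_1)$ with $u_1=\gamma^{*}v_1$, I would substitute $v=v_1\tau$ to obtain
\begin{align*}
\frac{\sqrt{\lambda}}{2}\,L=v_1\int_0^1\frac{d\tau}{f(u(\tau))},\qquad u(\tau):=F^{-1}\!\Bigl(F(u_1)+\tfrac{v_1^{2}}{2}(1-\tau^{2})\Bigr).
\end{align*}
Since $v_1(C)$ was shown in the text to be strictly increasing, one may regard $L$ as a smooth function of $v_1\in(0,\infty)$. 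Differentiating in $v_1$ and using the identity $f(u)\,\partial_{v_1}u=\gamma^{*}f(u_1)+v_1(1-\tau^{2})$, together with $v_1\gamma^{*}=u_1$ and $v_1^{2}(1-\tau^{2})/2=F(u)-F(u_1)$, a direct rearrangement gives
\begin{align*}
\frac{\sqrt{\lambda}}{2}\frac{dL}{dv_1}=\int_0^1\frac{g(u(\tau))}{f^{3}(u(\tau))}\,d\tau+h(u_1)\int_0^1\frac{f'(u(\tau))}{f^{3}(u(\tau))}\,d\tau,
\end{align*}
where $g(u):=f^{2}(u)-2f'(u)F(u)$ is the function already met in Theorem \ref{g(u)} and $h(u_1):=2F(u_1)-u_1 f(u_1)$ is a boundary correction that vanishes in the Dirichlet limit $u_1\to 0$.

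From the hypothesis $f''>0$ I would then extract the sign structure of each factor. One computes $g'(u)=-2f''(u)F(u)<0$, so $g$ strictly decreases from $g(0)=f^{2}(0)>0$ and has exactly one zero on $\mathbb{R}^+$; similarly $h''(u_1)=-u_1 f''(u_1)<0$ with $h(0)=0$ and $h'(0)=f(0)>0$ forces $h$ to be concave with a unique positive zero. Combined with the superlinear growth in \eqref{conditions}, these facts show $dL/dv_1>0$ for small $v_1$ and $dL/dv_1<0$ for large $v_1$, which together with $L(0)=L(\infty)=0$ and $L>0$ interior will force at least one maximum. The main obstacle is to prove that $dL/dv_1$ has exactly one zero in between: the Dirichlet-type integral $\int g/f^{3}$ and the Robin correction $h(u_1)\int f'/f^{3}$ each change sign, and one must rule out that they conspire to cross zero twice. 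I would attempt this either by an integration by parts in $\tau$ consolidating the two integrals into a single integrand whose sign is unimodal and governed by $f''>0$, or by a direct monotonicity-of-ratios comparison along the trajectory $\tau\mapsto u(\tau)$ at two putative critical points, which should contradict strict convexity of $f$. Once the single sign change of $dL/dv_1$ is established, the critical point is unique and is a maximum.
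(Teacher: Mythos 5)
Your part (i) is fine and matches the paper: \eqref{C1} and \eqref{C2} give the endpoint limits, and continuity gives boundedness. The problem is part (ii). You reduce the uniqueness of the critical point to the claim that
$\frac{dL}{dv_1}$ changes sign exactly once, arriving at a sum of a Dirichlet-type term $\int_0^1 g(u(\tau))f^{-3}(u(\tau))\,d\tau$ and a Robin correction $h(u_1)\int_0^1 f'(u(\tau))f^{-3}(u(\tau))\,d\tau$, and then you explicitly defer the decisive step (``I would attempt this either by an integration by parts \dots or by a direct monotonicity-of-ratios comparison''). That deferred step is precisely the content of the theorem: each of your two factors changes sign on its own, both sign changes occur at parameter values that are not obviously related, and nothing you have written rules out the combination crossing zero more than once. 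Note also that even in the pure Dirichlet case the paper only extracts ``at most two solutions'' from the sign structure of $g$ (Theorem \ref{g(u)}), not unimodality of the time map, so the analogy you are leaning on does not by itself deliver a single critical point. As it stands the argument establishes at least one critical point (already known from $L(0)=L(\infty)=0$ and $L>0$), but not exactly one.

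The paper avoids this computation entirely with a soft argument you already have the tools for: if $L(C)$ had two or more critical points, then since $L$ vanishes at both ends of $\mathbb{R}^+$ some level $L_0$ would be attained at three values $C_1<C_2<C_3$, producing three pointwise ordered solutions of \eqref{general}, \eqref{Robin} on the same interval $(0,L_0)$ with the same $\lambda$ (ordered because the curves $\mathcal{K}_{C_i}$ are nested and the solutions are symmetric). Lemma \ref{genprop1}, part 4, says that under $f''>0$ non-minimal solutions must intersect, which is incompatible with three ordered solutions. If you want to salvage your computational route you must actually carry out the consolidation of the two integrals and prove unimodality of the resulting integrand; otherwise I recommend switching to the intersection argument, which closes the gap in three lines.
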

\begin{proof}
 From \eqref{lambda1}, \eqref{C1} and \eqref{C2} it follows that $L(C)$ is bounded in $\mathbb{R}^+$ and must have at least one critical point. 
Suppose 
that $L(C)$ has more than one critical point. Then there exists $L_0$ such that the equation $L(C)=L_0$ has at least three solutions $C_1<C_2<C_3$. The corresponding  trajectories on $\mathcal{K}_{C_i}$, $i=1,2,3$ are solutions
of problem \eqref{general} with $L=L_0$. Clearly $u_1(x)\leq u_2(x)\leq u_3(x)$. By Lemma \ref{genprop1} 4. non-minimal solutions have to intersect. This contradicts our assumption.
\end{proof} 
Let $L_{\max}(\lambda,\alpha)=\max_{C\geq 0}L(C)$. An immediate consequence of Theorem \ref{Theo1} is
\begin{corollary}\label{C01}
Under the same assumptions as for Theorem \ref{Theo1}, Problem \eqref{general} has for fixed $L\in (0,L_{\max}(\lambda,\alpha))$ two solutions, for $L=L_{max}(\lambda,\alpha)$ one and for $L>L_{\max}(\lambda,\alpha)$ no solutions.
\end{corollary}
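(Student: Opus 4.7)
The plan is to read off the corollary directly from the shape of $L(C)$ established in Theorem \ref{Theo1}. Recall that $L:[0,\infty)\to \mathbb{R}^+$ is continuous, satisfies $L(0)=0$ and $L(C)\to 0$ as $C\to\infty$, is bounded, and has exactly one critical point $C^{*}$. From these four facts one concludes that $C^{*}$ is the unique global maximum of $L$, with $L(C^{*})=L_{\max}(\lambda,\alpha)$, and that $L$ is strictly increasing on $[0,C^{*}]$ and strictly decreasing on $[C^{*},\infty)$. This strict monotonicity on each side is the key structural input; it follows because a continuous function on an interval with only one interior critical point is monotone on each of the two sub-intervals determined by that point.

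Given this, I would apply the intermediate value theorem on each monotonicity branch to count pre-images of a prescribed length $L_0>0$. For $L_0\in (0,L_{\max}(\lambda,\alpha))$ the equation $L(C)=L_0$ has exactly one solution $C_1\in (0,C^{*})$ and exactly one solution $C_2\in (C^{*},\infty)$, i.e.\ two solutions in total. For $L_0=L_{\max}(\lambda,\alpha)$ the only pre-image is $C^{*}$ itself, by strict monotonicity on each side. For $L_0>L_{\max}(\lambda,\alpha)$ the pre-image is empty, since $L_{\max}$ is the global maximum.

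To translate the count of values of $C$ into a count of solutions of \eqref{general}, \eqref{Robin}, I would invoke the phase-plane correspondence developed in Section 3: for each $C>0$, the trajectory on $\mathcal{K}_C$ between the intersection point $P^{+}=\mathcal{L}^{+}\cap \mathcal{K}_C$ and $P^{-}=\mathcal{L}^{-}\cap\mathcal{K}_C$ is precisely the graph of a solution on an interval of length $L(C)$ computed by the formula \eqref{lambda1}. Since different $C$'s give different trajectories (the curves $\mathcal{K}_C$ are pairwise disjoint by Lemma \ref{KC}), they correspond to genuinely distinct solutions.

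There is no serious obstacle here; the corollary is essentially a change of perspective from the length function $L(C)$ to the solution-counting function $L\mapsto \#\{\text{solutions}\}$. The only point that deserves a brief remark is injectivity of the assignment $C\mapsto u_C$, which is immediate from $\mathcal{K}_{C_1}\cap \mathcal{K}_{C_2}=\emptyset$ for $C_1\neq C_2$.
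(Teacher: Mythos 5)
Your argument is correct and is exactly the reasoning the paper intends: the corollary is stated there as an ``immediate consequence'' of Theorem \ref{Theo1} with no written proof, and what you have done is spell out the routine details (unimodality of $L(C)$ from the single critical point together with $L(0)=0=L(\infty)$, preimage counting on the two monotone branches, and injectivity of $C\mapsto u_C$ via the disjointness of the curves $\mathcal{K}_C$). No gap; nothing further is needed.
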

Related results have been obtained by Keller and Cohen \cite{KeCo} and Laetsch \cite{Lae} for more general elliptic operators  and  problems in higher dimensions.
Laetsch was able to show that no three ordered solutions can exist.
\medskip

\noindent
Recall that $L(C)=L(C;\lambda,\alpha)$. We observe that $L(C)$ decreases as $\lambda$ increases
\begin{align} \label{LMo}
L(C;\lambda_2, \alpha)<L(C;\lambda_1,\alpha)\quad \tx{if} \quad \lambda_1<\lambda_2.
\end{align}
This is an immediate consequence of \eqref{lambda1} and the fact that $v(0)$ decreases as $\lambda$ increases.
\medskip

\noindent
Next we fix $L=L_0$ for some $L_0$ and rewrite \eqref{lambda1} as
\begin{eqnarray*}
\sqrt{\lambda(C)}:=\left(2\int_{0}^{v(0)}\frac{dv}{L_0\,f\left(F^{-1}\left(C-\frac{v^2}{2}\right)\right)}\right)^2.
\end{eqnarray*}
We will discuss $\lambda(C)$ for fixed $L=L_0$ and $\alpha>0$.

\begin{figure}[H]
	\centering
	\includegraphics[height=6cm]{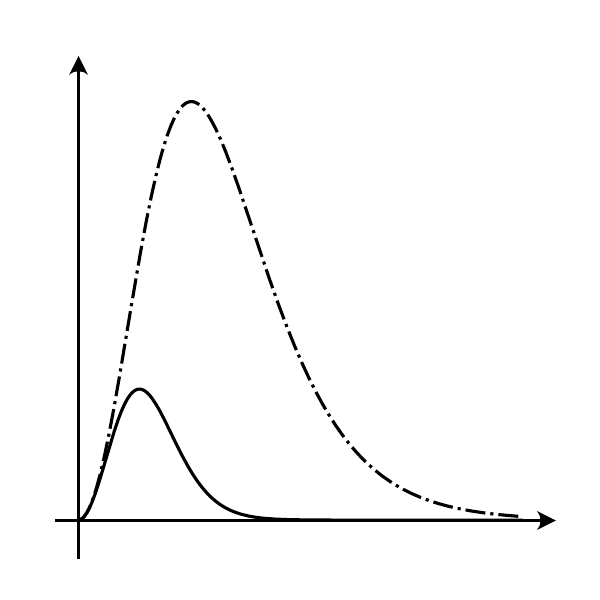}%
	\drawat{-6mm}{9.5mm}{$C$}%
	\drawat{-62.5mm}{52.5mm}{$\lambda(C)$}%
	
	\caption{$\lambda(C)$ for two different values of $\alpha$.}	
\end{figure}

\noindent
Since we will fix $\alpha$ we set 
\begin{eqnarray*}
L_{\max}(\lambda):=L_{max}(\lambda,\alpha)= \max_{C>0} L(C;\lambda,\alpha).
\end{eqnarray*}
Then the following holds true.
\medskip

\noindent
\begin{theorem} \label{curve1} 
Assume \eqref{conditions} and $f''(u)>0$. Let $\alpha>0$ be given. 
Then there exists $C^{*}>0$ such that $\lambda (C)$ is increasing in $(0,C^*)$ and decreasing 
in 
$(C^*,\infty)$. Moreover $\lambda(0)=0$, $\lambda(\infty)=0$ and $\lambda(C^*)=\lambda^*$.
\end{theorem}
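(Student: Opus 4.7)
My plan is to treat $\lambda(C)$ as the function implicitly defined by the level condition $L(C;\lambda,\alpha) = L_0$ and to read off its behaviour from the properties of $L(\cdot;\lambda,\alpha)$ already established in Theorem~\ref{Theo1}. For each fixed $C>0$, \eqref{LMo} makes $\lambda \mapsto L(C;\lambda,\alpha)$ strictly decreasing, and an elementary asymptotic analysis of \eqref{lambda1} shows that this map runs from $+\infty$ at $\lambda = 0^+$ down to $0$ at $\lambda = \infty$, so $\lambda(C)$ is uniquely determined and smooth on $(0,\infty)$. Implicit differentiation gives
\begin{equation*}
\frac{d\lambda}{dC} \;=\; -\,\frac{\partial L/\partial C}{\partial L/\partial \lambda},
\end{equation*}
and since $\partial L/\partial \lambda<0$, the sign of $d\lambda/dC$ at $C$ agrees with the sign of $\partial L/\partial C$ at $(C,\lambda(C))$.

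A critical point of $\lambda(\cdot)$ is therefore precisely a $C$ at which $\partial L/\partial C$ vanishes on the level curve. By Theorem~\ref{Theo1}, for fixed $\lambda$ this happens only at the unique maximiser $C^*(\lambda)$ of $L(\cdot;\lambda,\alpha)$, so every critical point of $\lambda(\cdot)$ must additionally satisfy $L_{\max}(\lambda) = L_0$. Using \eqref{LMo}, the maximum $L_{\max}$ is itself strictly decreasing in $\lambda$: for $\lambda_1<\lambda_2$,
\begin{equation*}
L_{\max}(\lambda_2) \;=\; L(C^*(\lambda_2);\lambda_2,\alpha) \;<\; L(C^*(\lambda_2);\lambda_1,\alpha) \;\leq\; L_{\max}(\lambda_1),
\end{equation*}
so $L_{\max}(\lambda)=L_0$ has at most one solution. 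Combined with the existence of the critical value $\lambda^*$ from Lemma~\ref{genprop1} and Corollary~\ref{C01} for the given $L_0$, this yields a unique $\lambda^*$ with $L_{\max}(\lambda^*) = L_0$, and hence a unique critical point $C^* := C^*(\lambda^*)$, at which $\lambda(C^*) = \lambda^*$.

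For the boundary behaviour, suppose $\lambda(C_n)$ failed to tend to $0$ along some $C_n \to 0$; passing to a subsequence with $\lambda(C_n) \to \lambda_0 \in (0,\infty]$, continuity and monotonicity of $L$ in both arguments (together with $L(0;\lambda,\alpha)=0$ from \eqref{C1}) would force $L(C_n;\lambda(C_n),\alpha) \to 0 \ne L_0$, a contradiction; the limit $C \to \infty$ is handled identically using \eqref{C2}. Hence $\lambda(0^+) = \lambda(\infty) = 0$. Since $\lambda(\cdot)$ is continuous and positive on $(0,\infty)$, vanishes at both endpoints, and has exactly one critical point, that critical point is forced to be a global maximum, and $d\lambda/dC$ keeps a constant sign on each side of $C^*$: positive on $(0,C^*)$ and negative on $(C^*,\infty)$.

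I expect the hardest part to be making the asymptotic ingredients rigorous: the claims $L(C;\lambda,\alpha) \to \infty$ as $\lambda \to 0^+$ and $L(C;\lambda,\alpha) \to 0$ as $\lambda \to \infty$, together with the contradiction step for the boundary behaviour of $\lambda(C)$. These rest on controlling the implicit relation $F(\gamma^* v_1) + v_1^2/2 = C$ (which pins down $v_1$) and the integral in \eqref{lambda1} in each regime, using $f(0)>0$ in the small-$\lambda$ limit and the superlinearity of $f$ in the large-$\lambda$ limit.
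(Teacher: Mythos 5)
Your proof is correct, and it rests on exactly the same two pillars as the paper's argument — the single-critical-point property of $C\mapsto L(C;\lambda,\alpha)$ from Theorem~\ref{Theo1} and the strict monotonicity in $\lambda$ from \eqref{LMo} — but it packages them differently. The paper argues by perturbation along the level set: starting from a $\lambda$ with $L_{\max}(\lambda)>L_0$ it takes the two roots $C_1<C_2$ of $L(C;\lambda,\alpha)=L_0$, observes that increasing $\lambda$ pushes both values of $L$ below $L_0$, and uses the local monotonicity of $L(\cdot;\lambda,\alpha)$ near $C_1$ (increasing) and $C_2$ (decreasing) to conclude that the lower branch $C_1(\lambda)$ moves right and the upper branch $C_2(\lambda)$ moves left; inverting gives the claimed monotonicity of $\lambda(C)$ on either side of $C^*$. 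You instead invert first, differentiate implicitly, and reduce the location of critical points of $\lambda(\cdot)$ to the equation $L_{\max}(\lambda)=L_0$, whose unique solvability you get from the strict decrease of $L_{\max}$. Your route buys two things the paper's proof leaves implicit: a clean argument that there is \emph{exactly} one critical point (the paper only tracks the two branches without excluding further ones), and an explicit treatment of the endpoint limits $\lambda(0^+)=\lambda(\infty)=0$ and of the identification $\lambda(C^*)=\lambda^*$, neither of which appears in the paper's proof. The price is that you need $\partial L/\partial\lambda<0$ strictly and smoothness of $L$ in $(C,\lambda)$ for the implicit function theorem; the first follows from the explicit factor $\lambda^{-1/2}$ in \eqref{lambda1} together with the decrease of $v(0)$ in $\lambda$, and the asymptotics you flag at the end ($L\to\infty$ as $\lambda\to0^+$ via $v_1\to\sqrt{2C}$ and $f\geq f(0)>0$, and $L\to0$ as $\lambda\to\infty$ via $v_1\leq \alpha F^{-1}(C)/\sqrt{\lambda}$) do go through, so there is no genuine gap.
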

\begin{proof}
Choose $\lambda$  such that $L_{\max}(\lambda)>L_0$. Then by  Theorem \ref{Theo1} the 
equation $L(C;\alpha,\lambda)=L_0$ has two solutions $C_1<C_2$. Hence Problem \eqref{general} has for $\lambda$ and $L=L_0$ two solutions $u(x)<U(x)$ such that $u_{\max}=F^{-1}(C_1)$ and $U_{\max}=F^{-1}(C_2)$. By \eqref{LMo}, 
  $L(C_i;\alpha, \lambda+\Delta \lambda)<L_0$ for $i=1,2$  and $\Delta \lambda >0$. Since $L(C;\lambda ,\alpha)$ is increasing in a neighborhood of $C_1$ and decreasing in a neighborhood of $C_2$ there exist $\Delta C_i>0$, i=1,2 such that
 $$ 
 L_0 = L(C_1+ \Delta C_1;\lambda + \Delta \lambda,\alpha) \tx{and} L_0 = L(C_2- \Delta C_2;\lambda + \Delta \lambda),\alpha).
 $$
Since for fixed $\lambda$, (s. Theorem \ref{Theo1}) $L(C)$ increases in $(0,L_{\max}(\lambda)$ and decreases in $(L_{\max}(\lambda)),\infty)$,  we have to increases $C_1$ in order to get a small solution $u(x)$ in $(0,L_0)$ for $\lambda +\Delta \lambda$. Similarly, in order to get a large solution $U(x)$ in $(0,L_0)$ for $\lambda+\Delta \lambda$ we have to decrease $C_2$.
\end{proof}
\medskip

This theorem together with Lemma \ref{genprop1} and \eqref{Greenb} leads to the following observation
\begin{lemma} \label{propertyUu}
Let $\alpha >0$ and $L<L_{\max}(\lambda)$ be given. Then problem \eqref{general} has for given $\lambda <\lambda^*$ two solutions
$u(x;\lambda)\leq U(x;\lambda)$. The minimal solution $u(x;\lambda)$ is a monotone increasing and $U(x;\lambda)$ is a monotone decreasing function of $\lambda$.
\end{lemma}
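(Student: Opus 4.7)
The plan is to take the three assertions in order, drawing on the phase-plane framework of Section~3.2. Existence of exactly two ordered solutions for $\lambda<\lambda^{*}$ and $L<L_{\max}(\lambda)$ follows from Corollary~\ref{C01} combined with Theorem~\ref{Theo1}: under $f''>0$ the length function $L(C;\lambda,\alpha)$ has a unique critical point $C^{*}(\lambda)$, so the equation $L(C;\lambda,\alpha)=L$ admits precisely two roots $C_1(\lambda)<C^{*}(\lambda)<C_2(\lambda)$, producing the ordered pair $u(\cdot;\lambda)\leq U(\cdot;\lambda)$ with maxima $F^{-1}(C_1(\lambda))$ and $F^{-1}(C_2(\lambda))$.

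The pointwise monotone increase of $u(x;\lambda)$ in $\lambda$ is already contained in Lemma~\ref{genprop1}(1): in the iteration $u_0\equiv 0$, $u_{n+1}''+\lambda f(u_n)=0$ with the Robin boundary data, each iterate depends monotonically on $\lambda$ by the maximum principle, and the property is inherited by the limit.

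For the decrease of $U$ I would start from Theorem~\ref{curve1}: since $\lambda(C)$ is strictly decreasing on $(C^{*},\infty)$, the inverse $C_2(\lambda)$ is strictly decreasing on $(0,\lambda^{*})$, so the midpoint value $U(L/2;\lambda)=F^{-1}(C_2(\lambda))$ is strictly decreasing. To promote this to pointwise monotonicity, I would differentiate the Green's representation \eqref{Greenb} with respect to $\lambda$; setting $W:=\partial_{\lambda}U$ one obtains
\begin{align*}
W''+\lambda f'(U)W=-f(U),\qquad W'(0)=\alpha W(0),\quad W'(L)=-\alpha W(L).
\end{align*}
Because $\mu_1<\lambda$ for the non-minimal $U$ (Lemma~\ref{genprop1}(4) with $f''>0$), the linear problem is nonsingular and $W$ is well defined and smooth in $\lambda$. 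Testing against the positive first eigenfunction $\phi_1$ of \eqref{linearized_problem} and using that the Robin conditions make the boundary terms cancel gives
\begin{align*}
(\lambda-\mu_1)\int_0^L f'(U)\phi_1 W\,dx=-\int_0^L f(U)\phi_1\,dx<0,
\end{align*}
so $W$ is negative in this weighted-integral sense. Near the fold $\lambda\nearrow\lambda^{*}$ the operator becomes nearly singular in the direction $\phi_1$ and $W\sim -c\phi_1$ with $c\to+\infty$, whence $W<0$ pointwise in a left neighborhood of $\lambda^{*}$; a continuity argument along the upper branch, combined with the nesting of the curves $\mathcal{K}_{C_2(\lambda)}$ from Lemma~\ref{KC}, extends this to all $\lambda\in(0,\lambda^{*})$.

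The main obstacle is exactly this last step: upgrading the weighted-integral sign of $W$, together with the strict decrease at the single point $x=L/2$ and the fold asymptotics, to strict pointwise negativity of $W$ on $[0,L]\times(0,\lambda^{*})$. The subtlety is that pointwise comparison of $U(\cdot;\lambda_1)$ and $U(\cdot;\lambda_2)$ does not follow directly from the strict ordering of the trajectories $\mathcal{K}_{C_2(\lambda_i)}$ in the $(v,u)$-plane, since the $x$-parametrizations of the two trajectories differ; ruling out a first sign change of $W$ therefore requires a careful propagation-of-sign argument along the upper branch starting from $\lambda^{*}$.
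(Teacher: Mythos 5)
Your treatment of the first two assertions is sound and matches what the paper intends (the paper in fact states this lemma as an ``observation'' following from Theorem \ref{curve1}, Lemma \ref{genprop1} and \eqref{Greenb}, with no written proof): existence of the ordered pair comes from Theorem \ref{Theo1}/Corollary \ref{C01}, and the monotone increase of the minimal solution is already contained in Lemma \ref{genprop1}(1). Your observation that $C_2(\lambda)$ decreases, hence $U(L/2;\lambda)=F^{-1}(C_2(\lambda))$ decreases, is also exactly what the proof of Theorem \ref{curve1} delivers. The weighted identity
$(\lambda-\mu_1)\int_0^L f'(U)\phi_1 W\,dx=-\int_0^L f(U)\phi_1\,dx$
is correctly derived (the Robin boundary terms do cancel when you integrate $W''\phi_1-\phi_1''W$).

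However, the third assertion --- pointwise monotone decrease of $U(x;\lambda)$ --- is not established, and you say so yourself. That is a genuine gap, not a removable technicality: the weighted-integral sign of $W$ is compatible with $W$ changing sign near the endpoints; the nesting of the curves $\mathcal{K}_{C_2(\lambda)}$ orders the trajectories in the $(v,u)$-plane but, as you correctly point out, says nothing direct about the comparison at a fixed $x$ because the arclength-to-$x$ reparametrizations differ (at a point where $U_1(x_0)=U_2(x_0)=a$ one has $(U_i')^2=2\lambda_i(C_2(\lambda_i)-F(a))$, and since $\lambda_1<\lambda_2$ while $C_2(\lambda_1)>C_2(\lambda_2)$ the two effects compete, so no sign conclusion follows). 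The fold asymptotics $W\sim -c\,\phi_1$ near $\lambda^*$ would also need the Crandall--Rabinowitz machinery, which the paper cites but does not set up, and your nonsingularity claim needs $\lambda\neq\mu_k$ for all $k$, not just $\mu_1<\lambda$. A cleaner route to close the gap is a comparison/intersection argument: if $U(\cdot;\lambda_1)$ and $U(\cdot;\lambda_2)$ with $\lambda_1<\lambda_2$ intersected, one could use that $U(\cdot;\lambda_1)$ is a strict supersolution of the $\lambda_1$-problem when compared with the $\lambda_2$-equation (indeed $U_1''+\lambda_2 f(U_1)>0$), together with the symmetry of both solutions about $L/2$ and the convexity of $f$, to run the same Barta/eigenvalue contradiction as in Lemma \ref{genprop1}(4); as written, though, your argument does not reach the stated conclusion.
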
 
\subsection{Trajectories in the phase plane for $\alpha<0$}
\subsubsection{General remarks}
As for $\alpha >0$ (s. Figure \ref{refa}) the solutions of \eqref{general} are given in the phase plane by trajectories on $\mathcal{K}_C$ such that $(v(0),u(0))\in \mathcal{L}^+$ and $(v(L),u(L)) \in \mathcal{L}^-$ .  Note that the slope of $ \mathcal{L}^+$ is  negative and the one of $ \mathcal{L}^-$ is positive (s. Figure \ref{refb}).  
\medskip

\noindent
According to \eqref{defF} , we define
\begin{eqnarray}\label{defF2}
F(u)=\int_{-\infty}^{u}f(t)\:dt.
\end{eqnarray}
Then $F^{-1}:\rz^{+}\to\rz$,  $F(0)=s_0=\int_{-\infty}^{0}f(t)\:dt$ and $F^{-1}(s_0)=0$.
\medskip

\noindent
We recall from \eqref{usol} that $u=F^{-1}\left(C-\frac{v^2}{2}\right)$ and $C$ is chosen such that $C>\frac{v^2}{2}$. From \eqref{P1} it follows that
\begin{eqnarray} \label{T-0}
\frac{du}{dv}=-\frac{v}{f\left(F^{-1}\left(C-\frac{v^2}{2}\right)\right)}\qquad \hbox{and}\qquad F'=f.
\end{eqnarray}
\begin{itemize}
\item From \eqref{defF2} and \eqref{usol} we deduce
\begin{eqnarray}\label{T-1}
v\,\to\,\pm\sqrt{2\,C}\quad\Longrightarrow\quad u(v)\,\to\,-\infty.
\end{eqnarray}
\item Since $f$ and $f'$ are integrable at $-\infty$, then
\begin{eqnarray*}
\lim_{s\to-\infty}f(s)=0.
\end{eqnarray*}
Hence
\begin{eqnarray*}
\lim_{v\,\to\,\pm\sqrt{2\,C}}f\left(F^{-1}\left(C-\frac{v^2}{2}\right)\right)=f(-\infty)=0,
\end{eqnarray*}
and by
\eqref{T-0}
\begin{eqnarray}\label{T-2}
v\,\to\,\pm\sqrt{2\,C}\quad\Longrightarrow\quad \frac{du}{dv}\,\to\,-\infty.
\end{eqnarray}
\item Since $0=F^{-1}(s_0)$, $v=\pm\sqrt{2(C-s_0)}$ implies that $u=0$. Then, for $C>s_0$
\begin{eqnarray}\label{T-3}
u(v)>0\quad\hbox{for}\quad v\in\left(-\sqrt{2(C-s_0)},\sqrt{2(C-s_0)}\right)\\
\nonumber  \tx{and} u(v)<0 \begin{cases}\tx{if} v>\sqrt{2(C-s_0)}\\
\tx{or} v<-\sqrt{2(C-s_0)}.
\end{cases}
\end{eqnarray}
If $C<s_0$, then $F^{-1}\left(C-\frac{v^2}{2}\right)<0$ for all $v$ with $C-\frac{v^2}{2}>0$. Hence
\begin{eqnarray}\label{T-4}
u(v)<0\quad\hbox{for}\quad v\in\left(-\sqrt{2C},\sqrt{2C}\right).
\end{eqnarray}
\end{itemize}
\medskip

\begin{figure}[H]
	\centering
	\begin{subfigure}{0.4\textwidth}\centering
	\includegraphics[width=80mm]{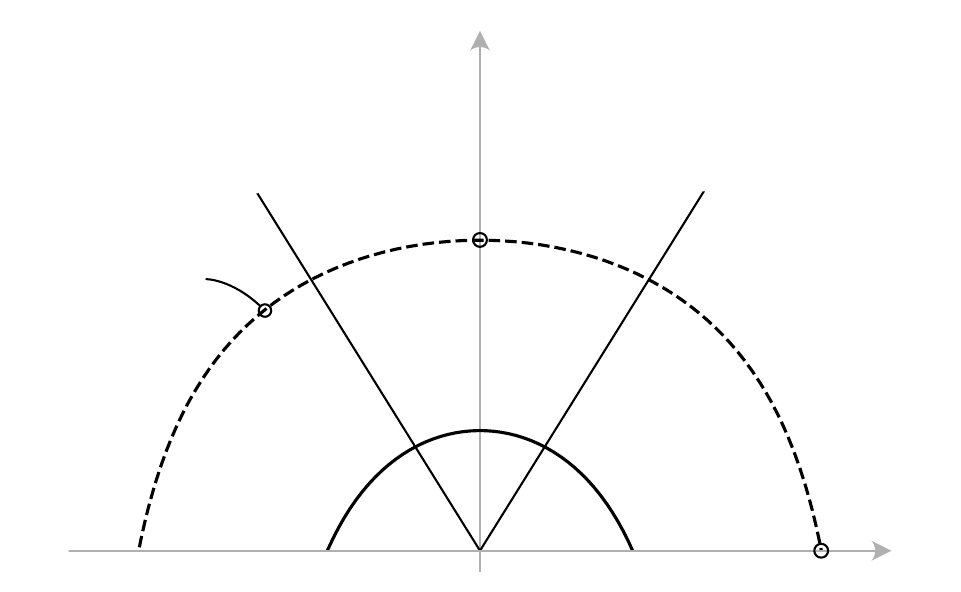}%
	\drawat{-60mm}{35mm}{$\mathcal{L}^-$}%
	\drawat{-22mm}{35mm}{$\mathcal{L}^+$}%
	\drawat{-40mm}{30.5mm}{$F^{-1}(C)$}%
	\drawat{-74.mm}{27.5mm}{$\mathcal{K}_C(v)$}%
	\drawat{-44mm}{45mm}{$u$}%
	\drawat{-8mm}{5mm}{$v$}%
	\drawat{-18mm}{-2mm}{$\sqrt{2C}$}%
	
	\caption{$\alpha > 0$}\label{refa}
	\end{subfigure}%
	\hspace{1cm}
	\begin{subfigure}{0.4\textwidth}\centering
	\includegraphics[width=80mm]{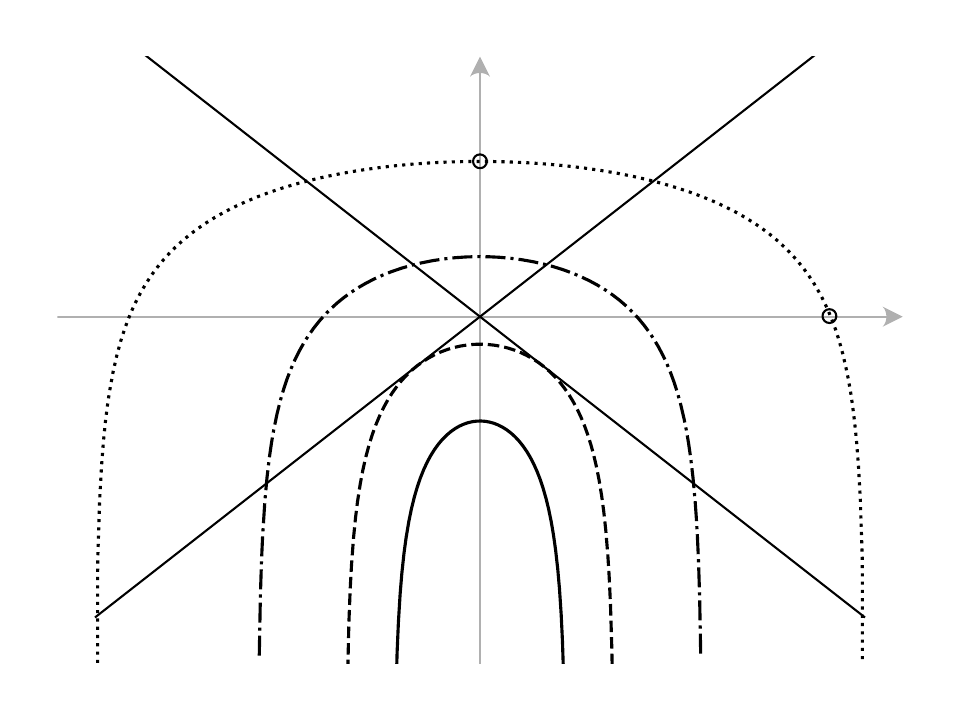}%
	\drawat{-60mm}{50mm}{$\mathcal{L}^+$}%
	\drawat{-23mm}{50mm}{$\mathcal{L}^-$}%
	\drawat{-40mm}{47.5mm}{$F^{-1}(C)$}%
	\drawat{-44mm}{54mm}{$u$}%
	\drawat{-6mm}{30.5mm}{$v$}%
	\drawat{-11mm}{35mm}{\small$\sqrt{2(C-s_0)}$}%
	
	\caption{$\alpha < 0$}\label{refb}
	\end{subfigure}
	
	\caption{Phase plane}
	
\end{figure}

\medskip

\noindent
\begin{lemma}\label{conta}
For any given $\gamma^{*}$ there exists a unique number $\tilde C<s_0$ such that $\mathcal{L}^\pm$ touch $\mathcal{K}_C$. Moreover for $C=\tilde C$ we have the following implicit characterizations: Set $P^{\pm}:=\mathcal{L}^{\pm}\cap\mathcal{K}_C$, then
\begin{itemize}
\item $P^{+}=(v_1,u_1)$ with $\gamma^{*} =-\,\frac{v_1}{f(F^{-1}(\tilde C-\frac{v_{1}^2}{2}))}$ and $u_1=F^{-1}\left(\tilde C-\frac{v_{1}^2}{2}\right)=\gamma^{*}v_1$;
\item $P^{-}=(-v_1,u_1)$ and $u_1=-\gamma^{*}(-v_1)$;
\item $\tilde{C}= F(\gamma^{*}v_1)+ \frac{v_1^2}{2}$.
\end{itemize}
\end{lemma}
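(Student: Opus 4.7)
The plan is to reduce tangency of $\mathcal{L}^{\pm}$ with $\mathcal{K}_{C}$ to a one-dimensional convex minimization. A point $(v,\gamma^{*}v)\in\mathcal{L}^{+}$ lies on $\mathcal{K}_{C}$ exactly when $\gamma^{*}v=F^{-1}(C-v^{2}/2)$; applying $F$ this rewrites as $h(v)=C$ with
\begin{equation*}
h(v):=F(\gamma^{*}v)+\frac{v^{2}}{2}.
\end{equation*}
Tangency corresponds to a double root of $h(v)-C$, i.e.\ to the horizontal line $y=C$ being tangent to the graph of $h$.

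The key step is strict convexity of $h$. Differentiating,
\begin{equation*}
h'(v)=\gamma^{*}f(\gamma^{*}v)+v,\qquad h''(v)=(\gamma^{*})^{2}f'(\gamma^{*}v)+1\ge 1,
\end{equation*}
so by \eqref{conditions} $h$ is strictly convex with a unique global minimizer $v_{1}$, and the equation $h(v)=C$ has $0$, $1$, or $2$ solutions according as $C<h(v_{1})$, $C=h(v_{1})$, or $C>h(v_{1})$. Setting $\tilde C:=h(v_{1})$ therefore supplies the unique value at which $\mathcal{L}^{+}$ touches $\mathcal{K}_{C}$. Because $\gamma^{*}=\sqrt{\lambda}/\alpha<0$ and $f(0)>0$, one has $h'(0)=\gamma^{*}f(0)<0$, which forces $v_{1}>0$; consequently $\tilde C=h(v_{1})<h(0)=F(0)=s_{0}$, giving the required inequality $\tilde C<s_{0}$.

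For $\mathcal{L}^{-}$ I would appeal to the $v\mapsto -v$ symmetry of $\mathcal{K}_{C}$ (Lemma \ref{KC}) together with the fact that $\mathcal{L}^{-}$ is the reflection of $\mathcal{L}^{+}$ across the $u$-axis: the tangent contact must occur at $(-v_{1},u_{1})$ and at the very same $\tilde C$; equivalently, $\tilde h(v):=F(-\gamma^{*}v)+v^{2}/2=h(-v)$ has its minimum at $-v_{1}$ with identical minimum value. The three bullet identities then unpack directly: stationarity $h'(v_{1})=0$ gives $v_{1}=-\gamma^{*}f(\gamma^{*}v_{1})$, and using $u_{1}=\gamma^{*}v_{1}=F^{-1}(\tilde C-v_{1}^{2}/2)$ this rearranges to $\gamma^{*}=-v_{1}/f(F^{-1}(\tilde C-v_{1}^{2}/2))$; the second bullet is the $v\to -v$ reflection; and the third is simply $\tilde C=h(v_{1})=F(\gamma^{*}v_{1})+v_{1}^{2}/2$. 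The only delicate point in an otherwise routine convexity argument is keeping track of the sign of $\gamma^{*}$, which is precisely what places the minimizer in $v>0$ and pushes $\tilde C$ strictly below $s_{0}$.
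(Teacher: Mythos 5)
Your argument is correct, and it replaces the paper's geometric reasoning with a cleaner analytic one. The paper obtains existence and uniqueness of $\tilde C$ by appealing to the concavity of each $\mathcal{K}_C$ (Lemma \ref{KC}) together with the ordering of the family $\{\mathcal{K}_C\}_C$, reads off $v_1>0$ from the figure, deduces $\tilde C<s_0$ from $u_1=\gamma^{*}v_1<0$ (hence $F^{-1}(\tilde C-v_1^2/2)<0$), and gets the identities by equating the slope $\gamma^{*}$ of $\mathcal{L}^+$ with the slope $-v/f(u)$ of $\mathcal{K}_C$ at the contact point. You instead encode the intersection condition in the scalar function $h(v)=F(\gamma^{*}v)+v^2/2$ and use $h''\ge 1$; this gives a self-contained quantitative substitute for each step: a unique minimizer exists because $h''\ge 1$ (strict convexity alone would not suffice for existence, so it is good that you recorded the lower bound $1$), $v_1>0$ follows from $h'(0)=\gamma^{*}f(0)<0$ rather than from a picture, and $\tilde C<s_0$ follows from $h(v_1)<h(0)=s_0$. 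The two routes meet in the same formulas: your stationarity condition $h'(v_1)=0$ is exactly the paper's \eqref{contact}, and the relation $C=F(\gamma^{*}v_1)+v_1^2/2$ is the one the paper records after \eqref{boundary}; you derive these from a minimization principle, the paper from tangency of slopes. One point worth making explicit in your write-up is that $h$ is defined on all of $\rz$ only because, in the $\alpha<0$ setting, $F$ is taken as in \eqref{defF2} with $f$ integrable at $-\infty$; this is the paper's standing convention and is what makes $F^{-1}:\rz^+\to\rz$ surjective onto the negative axis, which your reduction tacitly uses.
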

\begin{proof} Since $\mathcal{K}_C$ is concave and since the family
$\mathcal{K}_C$ is ordered with respect to $C$, there exists a unique $\tilde C$ such that  $\mathcal{L}^+(v)$ touches $\mathcal{K}_{\tilde C}$ at the point $ P^+=(v_1,u_1)$ with $u_1=\gamma^{*}v_1$. 
\medskip

\noindent
As indicated in Figure \ref{refb} for such a point we have $v_1>0$ and in this point
\begin{eqnarray}\label{schnitt}
\gamma^{*}v_1=u_1=F^{-1}\left(\tilde C-\frac{v_{1}^2}{2}\right).
\end{eqnarray}
Since $\gamma^{*}<0$ for $\alpha<0$ this implies $F^{-1}\left(\tilde C-\frac{v_{1}^2}{2}\right)<0$. This in turn leads to $\tilde C<s_0$.
\medskip

\noindent
Similarly $\mathcal{L}^-$ touches  $\mathcal{K}_{\tilde C}$ at $P^-$ which is the reflexion of $\tilde P^+$ at the $u$-axis. Hence $P^-=(-v_1,u_1)$ and in this case $u_1=-\gamma^{*}(-v_1)$.
\medskip

\noindent
The tangents of $\mathcal{L}^+$ and $\mathcal{K}_{\tilde C}$ at the point of contact $P^+=(v_1,u_1)$ are the same and $P^+\in \mathcal{L}^+\cap \mathcal{K}_{\tilde C}$. Thus $v_1$ solves \eqref{schnitt} and
\begin{eqnarray*}
\gamma^{*} =-\,\frac{v_1}{f(F^{-1}(\tilde C-\frac{v_{1}^2}{2}))}=-\frac{v_1}{f(\gamma^{*}v_1)}.
\end{eqnarray*}
This implies
\begin{align} \label{contact}
\gamma^{*}f(\gamma^{*}v_1))+v_1=0\quad \tx{and}\quad \tilde C= F(\gamma^{*}v_1)+ \frac{ v_1^2}{2}.
\end{align}
\end{proof}
For $C>\tilde{C}$ there will be two intersections between $\mathcal{K}_C$ with $\mathcal{L}^+$.
These intersection points will be denoted by $P^{+}_{i} =(v^{+}_{i},u^{+}_{i})$ for $i=1,2$. The points are counted in such a way that  $v_1^+>v^+_2$ and $u_1^+<u_2^+$.
\medskip

\noindent
Analogously we set $P^-_i=(v^-_i,u^-_i)$, $i=1,2$ for the intersection points of $\mathcal{K}_C$ with $\mathcal{L}^-$ . The points are counted such that $v_1^-<v_2^-$  and  $u_1^+<u^-_2$. $P^+_1$ is the reflexion of $P^-_1$ and $P^+_2$  is the reflexion of $P^-_2$.
(see Figure \ref{fig:Figure3Mod}). 
\medskip

\begin{figure}[H]
	\centering
	\includegraphics[width=10cm]{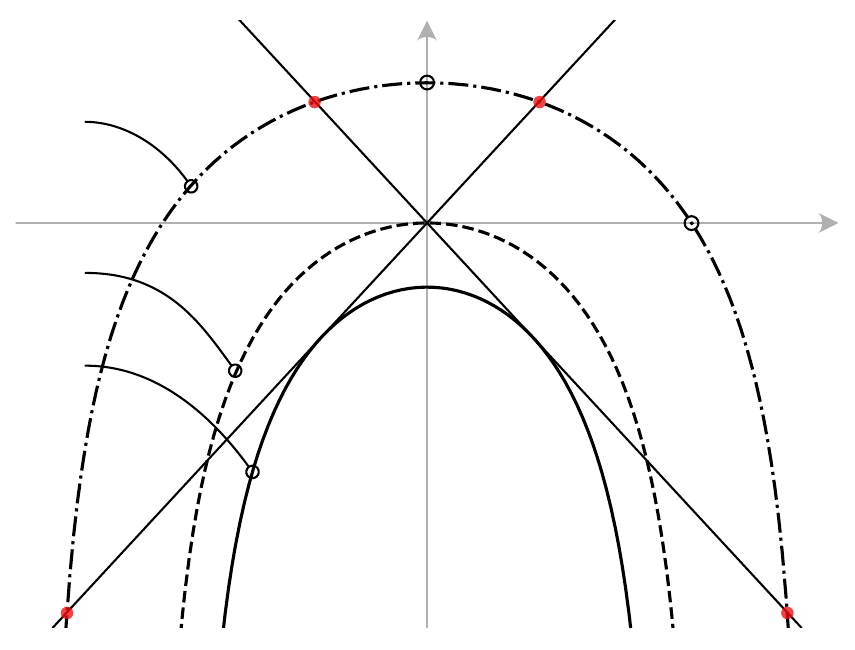}%
	\drawat{-75mm}{74.5mm}{$\mathcal{L}^+$}%
	\drawat{-27.5mm}{74.5mm}{$\mathcal{L}^-$}%
	\drawat{-49.5mm}{67.5mm}{\small$F^{-1}(C)$}%
	\drawat{-53.5mm}{72mm}{$u$}%
	\drawat{-4mm}{46.5mm}{$v$}%
	\drawat{-19.5mm}{51.5mm}{\small$\sqrt{2(C-s_0)}$}%
	\drawat{-96.5mm}{62mm}{$\mathcal{K}_C$}%
	\drawat{-96.5mm}{44mm}{$\mathcal{K}_{s_0}$}%
	\drawat{-96.5mm}{33mm}{$\mathcal{K}_{\tilde C}$}%
	\drawat{-98mm}{4.5mm}{$P^-_1$}%
	\drawat{-7mm}{4.5mm}{$P^+_1$}%
	\drawat{-39mm}{59.5mm}{$P^-_2$}%
	\drawat{-67mm}{59.5mm}{$P^+_2$}%
	
	\caption{\label{fig:Figure3Mod}Trajectories and intersection points}
\end{figure}
\medskip

The special form of $\mathcal{K}_C$ implies the following lemma.
\begin{lemma}
\begin{enumerate}
\item If $C<\tilde C$, $\mathcal{L}^\pm$ doesn't intersect $\mathcal{K}_C$.
\item If $\tilde C<C<s_0$, then $P^+_i$, $i=1,2$ are both on the right-hand side of the $u$-axis, whereas $P^-_i$ are on 
the left-hand side of the $u$-axis.
\item If $s_0<C$, $P^+_1$ is on the right-hand side and $P^+_2$ is on the left-hand side of the $u$-axis. Vice versa  $P^-_1$ is on the left-hand side and $P^-_2$ on the right-hand side of the $u$-axis.
 \end{enumerate}
 \end{lemma}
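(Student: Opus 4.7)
My plan is to handle the three cases separately, drawing on three prior ingredients: (a) the concavity and strict ordering of the family $\{\mathcal{K}_C\}$ from Lemma \ref{KC}; (b) the tangency characterization of $\tilde C$ from Lemma \ref{conta}; and (c) the sign rule $F^{-1}(C)<0$ iff $C<s_0$, which follows because $F^{-1}$ is strictly increasing with $F^{-1}(s_0)=0$.

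For part 1 I would argue that $\mathcal{L}^+$, being tangent to the concave curve $\mathcal{K}_{\tilde C}$, lies weakly above $\mathcal{K}_{\tilde C}$ with a single point of contact at $P^+$. By Lemma \ref{KC}, $\mathcal{K}_C$ lies strictly below $\mathcal{K}_{\tilde C}$ for every $C<\tilde C$, hence strictly below $\mathcal{L}^+$, so no intersection can occur. The argument for $\mathcal{L}^-$ is identical.

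For part 2 I would observe that the maximum value of $\mathcal{K}_C$ on its domain equals $F^{-1}(C)$ (attained at $v=0$), which is strictly negative when $C<s_0$. Thus the whole curve sits in the open half-plane $\{u<0\}$. Since $\alpha<0$ gives $\gamma^*=\sqrt{\lambda}/\alpha<0$, the line $\mathcal{L}^+(v)=\gamma^* v$ lies in $\{u<0\}$ precisely when $v>0$, so both intersection points $P_i^+$ must satisfy $v_i^+>0$. The reflection symmetry $v\mapsto -v$ swaps $\mathcal{L}^+$ and $\mathcal{L}^-$ while preserving $\mathcal{K}_C$, so the two $P_i^-$ end up in $\{v<0\}$.

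For part 3, with $C>s_0$ the top of $\mathcal{K}_C$ now sits at height $F^{-1}(C)>0$, and solving $F^{-1}(C-v^2/2)=0$ shows the curve meets $\{u=0\}$ precisely at $v=\pm\sqrt{2(C-s_0)}$. Between these zeros $\mathcal{K}_C$ forms an arch in $\{u>0\}$; outside them, two tails descend into $\{u<0\}$. The line $\mathcal{L}^+$ lies in $\{u>0\}$ only when $v<0$ and in $\{u<0\}$ only when $v>0$, so the two known intersections with $\mathcal{K}_C$ must split between the half-planes: one on the right tail with $v>0$ and one on the arch with $v<0$. The counting convention $v_1^+>v_2^+$ then forces $P_1^+\in\{v>0\}$ and $P_2^+\in\{v<0\}$. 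Reflection across the $u$-axis yields the analogous placement of $P_1^-$ and $P_2^-$. The main obstacle is purely bookkeeping: keeping the signs of $\gamma^*$ and $F^{-1}(C)$ straight in each regime. All of the analytic substance (concavity of $\mathcal{K}_C$, existence of the tangent value $\tilde C$, and existence of exactly two intersections for $C>\tilde C$) has already been established in Lemmas \ref{KC} and \ref{conta} and in the paragraph immediately preceding the statement.
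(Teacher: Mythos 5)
The paper states this lemma without proof (it is introduced only by ``The special form of $\mathcal{K}_C$ implies the following lemma''), so there is no official argument to compare against; your proposal supplies the missing reasoning, and it is essentially sound. Parts 1 and 2 are complete: the tangent line to a concave curve is a supporting line from above, so $\mathcal{L}^{\pm}\geq\mathcal{K}_{\tilde C}>\mathcal{K}_C$ for $C<\tilde C$ by the ordering in Lemma \ref{KC}; and for $\tilde C<C<s_0$ the whole curve lies in $\{u<0\}$ because its maximum is $F^{-1}(C)<F^{-1}(s_0)=0$, which together with $\gamma^{*}<0$ forces $v>0$ at any intersection with $\mathcal{L}^{+}$, the $\mathcal{L}^{-}$ case following by the reflection symmetry of $\mathcal{K}_C$.

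The one step that needs tightening is the phrase ``must split between the half-planes'' in part 3. What you have actually shown at that point is only a localization: an intersection with $u>0$ must have $v<0$ (left half of the arch) and an intersection with $u<0$ must have $v>0$ (right tail). That by itself does not exclude both intersection points lying on the right tail, say. The missing observation is that the concave function $h(v):=F^{-1}\left(C-\frac{v^2}{2}\right)-\gamma^{*}v$ satisfies $h(0)=F^{-1}(C)>0$ when $C>s_0$ and tends to $-\infty$ at both ends $v\to\pm\sqrt{2C}$ of its domain (the curve drops to $-\infty$ while the line stays bounded), so $h$ has exactly one zero on each side of $v=0$. Combined with your localization this pins $P_1^{+}$ to the right tail ($v>0$, $u<0$) and $P_2^{+}$ to the arch ($v<0$, $u>0$), and the $P_i^{-}$ follow by reflection. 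With that sentence added, the proof is complete.
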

The trajectory $\widehat{P_i^+P_j^-}$ corresponds to a solution of \eqref{general}, \eqref{Robin} such that
 $L=L(C)$.
In contrast to positive $\alpha$ there exist beside of symmetric also asymmetric solutions.  All possibilities are listed  in Table \ref{tab:solutiontypes}, s. also Figure \ref{fig:Figure3Mod} .


\begin{table}[h]
\centering
\begin{tabular}{|l||r|r|r|}
\hline
\rule[12mm]{0mm}{-6mm}$s_0<C$ & $\widehat{P^+_1P^-_1}$ s-solution & $\widehat{P^+_1P^-_2}$ i-asolution & $\widehat{P^+_2P^-_1}$  d-solution \\ \hline
\rule[12mm]{0mm}{-6mm}$C=s_0$ & $\widehat{P^+_1P^-_1}$  s-solution & $\widehat{P^+_10}$ i-solution & $\widehat{0 P^-_1}$  d-solution \\ \hline
\rule[12mm]{0mm}{-6mm}$\tilde C<C<s_0$ &$\widehat{P^+_1P^-_1}, \widehat{P_2^-P_2^+}$ s-solutions& $\widehat{P^+_1P^-_2}$ c-solution& $\widehat{P^+_2P^-_1}$ c-solution \\ \hline
\rule[12mm]{0mm}{-6mm}$C=\tilde C$ & $\widehat{P^+_1P^-_1}=\widehat{P^+_2P^-_2}$  s-solution & no solution  & no solution \\ \hline
\rule[7mm]{0mm}{-2mm}$C<\tilde C$ & no solution & no solution & no solution  \\ \hline
\end{tabular}
\medskip

\parbox{145mm}{\caption{\label{tab:solutiontypes}s-solution = symmetric, i-solution = increasing asymmetric,\\d-solution = decreasing asymmetric, c-solution = non-monotone asymmetric}}
\end{table}

\subsection{Symmetric solutions}
The symmetric solutions are given by the trajectory $\widehat{P^+_iP^-_i}$, 
$i=1,2,$ in the phase plane. We denote by $L_{i}(C)$ the corresponding length of the interval.  By \eqref{length2} and \eqref{usol} we have
\begin{align} \label{LS}
L_{i}(C)=2\int_0^{v_i^+}\frac{dv}{\sqrt{\lambda}f(F^{-1}(C-\frac{v^2}{2}))}=2\int_{u_i^+}^{F^{-1}(C)}\frac{du}{\sqrt{2\lambda(C-F(u))}},
\end{align}
We start with the discussion of the trajectories $\widehat{P^+_2P^-_2 }$, thus $i=2$. They exist only if $C\in (\tilde C, s_0)$.
\medskip

\noindent

\begin{figure}[H]
\centering
\includegraphics[width=60mm]{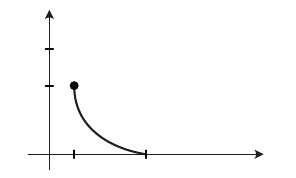}%
\drawat{-.9cm}{7mm}{$C$}%
\drawat{-31mm}{1mm}{$s_0$}%
\drawat{-47mm}{-0.5mm}{$\tilde{C}$}%
\drawat{-63mm}{17.5mm}{$L_2(\tilde{C})$}%
\drawat{-62.5mm}{25mm}{$-2/\alpha$}%
\drawat{-56mm}{33mm}{$L_2$}%

\caption{\label{fig:L2C}$L_2(C)$}
\end{figure}
\begin{theorem}\label{auxiliary1}
Let $\tilde C <C<s_0$ and let $\alpha<0$ and $\lambda>0$ be fixed. Then $L_{2}(\cdot)$ is a monotone decreasing function of $C$ such that $L_{2}(s_0)=0$ and $L_0:=L_{2}(\tilde C)<-\frac{2}{\alpha}$. For fixed  $L\in (0,L_0)$ there is exactly one trajectory $\widehat{P^+_2P^-_2}$ which corresponds to a  symmetric solution of  Problem \eqref{general} (see Figure \ref{fig:L2C}).
\end{theorem}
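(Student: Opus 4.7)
The plan is to work directly with the $v$-form from~\eqref{LS},
$$\frac{\sqrt{\lambda}}{2}L_2(C)=\int_0^{v_2^+(C)}\frac{dv}{f\bigl(F^{-1}\bigl(C-\tfrac{v^2}{2}\bigr)\bigr)},$$
which has a bounded integrand, and to establish strict decrease by Leibniz differentiation. First I would determine the sign of $(v_2^+)'(C)$. Implicit differentiation of $F(\gamma^* v_2^+)+\tfrac{1}{2}(v_2^+)^2=C$ gives $(v_2^+)'(C)=1/h(v_2^+)$, where $h(v):=\gamma^* f(\gamma^* v)+v$. Since $h'(v)=(\gamma^*)^2 f'(\gamma^* v)+1>0$ and $h(v_1)=0$ by the contact relation~\eqref{contact}, and since $v_2^+<v_1$ for $C>\tilde C$, one has $h(v_2^+)<0$ and therefore $(v_2^+)'(C)<0$.

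Applying Leibniz's rule, the boundary contribution at $v=v_2^+$ equals $(v_2^+)'(C)/f(\gamma^* v_2^+)$, which is strictly negative by the previous step. Differentiating the integrand in $C$ and using $(F^{-1})'=1/f$ yields $-f'(F^{-1}(\cdot))/f^3(F^{-1}(\cdot))\le 0$ by the hypothesis $f'\ge 0$. Adding, $L_2'(C)<0$ on $(\tilde C,s_0)$. For the boundary values, $L_2(s_0)=0$ because $v_2^+\to 0$ as $C\to s_0^-$ while the integrand stays bounded (its argument $F^{-1}(C-v^2/2)$ lies near $0$, where $f(0)>0$). At $C=\tilde C$, the bound $f(u(v))\ge f(u_1)$ on $[0,v_1]$ combined with the contact identity $f(\gamma^* v_1)=-v_1/\gamma^*$ from~\eqref{contact} gives
$$L_2(\tilde C)\le\frac{2v_1}{\sqrt{\lambda}\,f(\gamma^* v_1)}=-\frac{2\gamma^*}{\sqrt{\lambda}}=-\frac{2}{\alpha},$$
with strict inequality since $u(v)>u_1$ on $[0,v_1)$ makes the pointwise bound strict on a set of positive measure.

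Uniqueness of the trajectory for each $L\in(0,L_0)$ is then immediate from strict monotonicity and continuity of $L_2$: the equation $L_2(C)=L$ has exactly one root in $(\tilde C,s_0)$, which determines the trajectory $\widehat{P_2^+P_2^-}$. The delicate step I foresee is the endpoint $C=\tilde C$: there $h(v_2^+)\to 0$ and $(v_2^+)'(C)$ blows up, so the Leibniz argument only yields strict decrease on the open interval $(\tilde C,s_0)$. One must separately verify that $v_2^+(C)$ extends continuously to $\tilde C$ (it does, with $v_2^+\to v_1$, governed by the simple zero of $h$ at $v_1$) and that $L_2$ is continuous there, in order to identify $L_0$ with $L_2(\tilde C)$ and to make the final bound against $-2/\alpha$ meaningful.
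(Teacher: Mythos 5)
Your proof is correct and follows the same overall strategy as the paper's: establish monotonicity of $L_2$, evaluate at $s_0$, and bound $L_2(\tilde C)$ by $-2/\alpha$ using the contact identity \eqref{contact}; uniqueness then falls out of monotonicity. The execution differs in two places. For monotonicity the paper's proof inside the theorem is qualitative ($v_2^+$ decreases and the integrand $1/f(u)$ decreases pointwise as $C$ grows, both read off the phase plane), whereas your Leibniz computation reproduces exactly the identity the paper records immediately afterwards as \eqref{diffLC}; your sign analysis of $h(v)=v+\gamma^{*}f(\gamma^{*}v)$ (strictly increasing, vanishing at $v_1$, hence negative at $v_2^+<v_1$) is a clean justification of the claim that $v_2^+(C)$ decreases, which the paper simply asserts. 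For the bound at $\tilde C$ the paper integrates by parts in the $u$-variable to obtain \eqref{length+} and evaluates the boundary term $u_2^+/(\gamma^{*}f(u_2^+))=-\gamma^{*}$, while you bound the integrand below by $f(u_1)$ and use $f(\gamma^{*}v_1)=-v_1/\gamma^{*}$; this is more elementary and yields the same constant. One shared caveat: under the bare hypothesis $f'\geq 0$ both arguments give only $L_2(\tilde C)\leq -2/\alpha$ unless $f'$ is not identically zero on the range of $u$ along the trajectory, so your ``positive measure'' remark implicitly uses strict monotonicity of $f$ (harmless under the convexity assumptions used elsewhere in the paper, but worth flagging). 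Your closing observation that $(v_2^+)'$ blows up at $C=\tilde C$, so the derivative argument lives on the open interval and continuity at the endpoint must be checked via the simple zero of $h$ at $v_1$, is exactly the degeneracy the paper quantifies in \eqref{Deriv}, and it correctly supplies a detail the paper leaves implicit.
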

\begin{proof} From the phase plane it follows immediately  that $v_2^+(s_0)=0$ and that $v_2^+(C)$ decreases as $C$ increases. In addition $f(u)$ increases as $C\in (\tilde C, s_0)$ increases. Hence  $L_2(C)$ is monotone decreasing. From $v_2^+(s_0)=0$ and $f(0)>0$ we have
$L_{2}(s_0)=0$. 
\medskip

\noindent
Notice that  $\frac{d}{du} \sqrt {2(C-F(u))} =- \frac{f(u)}{\sqrt{2(C-F(u))}}$. Integration by parts of \eqref{LS} yields
 \begin{align} \label{length+}
 \frac{\sqrt{\lambda}}{2} L_{2}(C)= \frac{u_2^+}{\gamma^{*} f(u_2^+)} -\int_{u^+_2}^{F^{-1}(C)}\frac{f'(u)}{f^2(u)}\sqrt{2(C-F(u))}\:du.
 \end{align}
By \eqref{contact}, $u_2^+=-(\gamma^*)^2f(u_2^+)$.  Hence $L_{2}(\tilde C)< -\frac{2}{\alpha}$.
The last statement is obvious. 
\end{proof}
\noindent Differentiation of $L_{2}(C)$ with respect to $C$ yields
\begin{align*}
 \sqrt{\lambda}\frac{dL_{2}(C)}{dC}= 2\frac{(v_2^+)'}{f(u^+_2)} -2\int_0^{v_2^+}\frac{f'(u(v))}{f^3(u(v))}dv.
 \end{align*}
  From \eqref{boundary} it follows that
$$
 F(\gamma^{*}v_2^+)=C-\frac{(v_2^+)^2}{2}.
$$
Differentiation of this expression with respect to $C$ yields
$$
( v_2^+)'= \frac{1}{v_2^+ +\gamma^{*}f(\gamma^{*}v_2^+)}.
$$
Hence
\begin{align} \label{diffLC}
\sqrt{\lambda}\frac{dL_{2}(C)}{dC}=\frac{2}{f(\gamma^{*}v_2^+)(v_2^++\gamma^{*}f(\gamma^{*}v_2^+))}-2\int_0^{v_2^+}\frac{f'(u(v))}{f^3(u(v))}dv.
\end{align}
 From \eqref{contact} we obtain
 \begin{align}\label{Deriv}
\lim_{C\searrow \tilde C}\frac{d}{dC} L_{2}(C)=-\infty.
\end{align}
\bigskip
Next we discuss the symmetric solutions represented in the phase plane by $\widehat{P^+_1P^-_1}$, i.e. $i=1$. They are defined for $C\in (\tilde C,\infty)$.
\medskip

\noindent
\begin{lemma}\label{Lsymm}
The length $L_{1}(C)$ satisfies
\begin{itemize} 
\item[(i)] $\lim_{C\to \infty} L_{1}(C)= -\frac{2}{\alpha}$,
\item[(ii)] $L_{1}(C)>-\frac{2}{\alpha} \tx{if} C\geq s_0$,
\item[(iii)] $L_{1}(\tilde C)=L_2(\tilde C))< -\frac{2}{\alpha}$,
\item[(iv)] $\lim_{C\to \tilde C}  \sqrt{\lambda}\frac{dL_{1}(C)}{dC}=\infty.$
\end{itemize}
\end{lemma}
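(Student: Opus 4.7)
My plan is to treat the four items in the order (iii), (ii), (i), (iv), leveraging both integral representations in \eqref{LS} together with the implicit relation \eqref{boundary} and the tangency identity \eqref{contact}.

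Item (iii) is essentially free: at $C=\tilde C$ the line $\mathcal{L}^+$ is tangent to $\mathcal{K}_{\tilde C}$, so $P_1^+=P_2^+$, hence $v_1^+(\tilde C)=v_2^+(\tilde C)$ and \eqref{LS} forces $L_1(\tilde C)=L_2(\tilde C)$; the strict bound $L_2(\tilde C)<-2/\alpha$ is already established in Theorem \ref{auxiliary1}. For items (ii) and (i), I would split the $u$-form of \eqref{LS} at $u=0$ (legal when $C\ge s_0$, since then $F^{-1}(C)\ge 0$ and $u_1^+<0$):
\begin{equation*}
\frac{\sqrt\lambda}{2}L_1(C)=\int_{u_1^+}^{0}\frac{du}{\sqrt{2(C-F(u))}}+\int_0^{F^{-1}(C)}\frac{du}{\sqrt{2(C-F(u))}}=:I_1(C)+I_2(C).
\end{equation*}
For (ii), monotonicity of $F$ gives $C-F(u)\le C-F(u_1^+)=(v_1^+)^2/2$ on $[u_1^+,0]$, so $I_1(C)\ge -u_1^+/v_1^+=-\gamma^*$, with strict inequality because $f>0$ prevents $F$ from being constant on any subinterval; together with $I_2(C)\ge 0$ this yields $L_1(C)>-2\gamma^*/\sqrt\lambda=-2/\alpha$.

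For (i), I plan to sandwich
\begin{equation*}
-\frac{u_1^+}{v_1^+}\;\le\; I_1(C)\;\le\;-\frac{u_1^+}{\sqrt{2(C-s_0)}},
\end{equation*}
and show both ends converge to $-\gamma^*$ as $C\to\infty$: since $f$ is integrable at $-\infty$ and $u_1^+\to-\infty$, we have $F(u_1^+)\to 0$, whence $(v_1^+)^2/2=C-F(u_1^+)\sim C$ and therefore $u_1^+/\sqrt{2C}=\gamma^* v_1^+/\sqrt{2C}\to\gamma^*$. For $I_2$, after subtracting the additive constant $s_0$ from $F$ it is precisely the half-length of a Dirichlet trajectory at shifted energy $C-s_0$, and the superlinearity argument already used in \eqref{C2} then delivers $I_2(C)\to 0$. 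Summing, $L_1(C)\to-2/\alpha$.

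For (iv), I would differentiate the $v$-form of \eqref{LS} under the integral sign, using $\partial u/\partial C=1/f(u)$, to obtain
\begin{equation*}
\sqrt\lambda\,\frac{dL_1}{dC}=\frac{2(v_1^+)'(C)}{f(u_1^+)}-2\int_0^{v_1^+}\frac{f'(u(v))}{f^3(u(v))}\,dv.
\end{equation*}
Implicit differentiation of \eqref{boundary} gives $(v_1^+)'=1/(v_1^++\gamma^* f(u_1^+))$. The function $\phi(v):=v+\gamma^* f(\gamma^* v)$ has $\phi'(v)=1+(\gamma^*)^2 f'(\gamma^* v)>0$, so its unique zero at the tangency value separates the two intersection branches; $v_1^+$ lies on the branch where $\phi>0$ decreases to $0^+$ as $C\searrow\tilde C$, whence $(v_1^+)'\to+\infty$ while the remaining integral stays bounded, proving (iv). The principal obstacle is the decay of $I_2$ in (i), where the integrand blows up at the upper limit; the shift-to-Dirichlet reduction handles this cleanly by re-using \eqref{C2} rather than requiring a direct quantitative superlinearity estimate.
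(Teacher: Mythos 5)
Your proposal is correct, and for two of the four items it takes a genuinely different (and in places tighter) route than the paper. Items (iii) and (iv) essentially coincide with the paper's argument: the paper also gets (iii) from the coalescence $P_1^+=P_2^+$ at $C=\tilde C$ together with Theorem \ref{auxiliary1}, and gets (iv) from the analogue of \eqref{diffLC} with the denominator $v_1^++\gamma^*f(\gamma^*v_1^+)$ vanishing by \eqref{contact}; your extra observation that $\phi(v)=v+\gamma^*f(\gamma^*v)$ is strictly increasing, so that $\phi(v_1^+)\to 0^+$ on the $P_1^+$ branch while $\phi(v_2^+)\to 0^-$ on the $P_2^+$ branch, is a welcome justification of the sign ($+\infty$ here versus $-\infty$ in \eqref{Deriv}) that the paper leaves implicit. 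For (ii) the paper instead uses the Green's function representation \eqref{Greenb}: $u_{\max}=\frac{\lambda}{\alpha}\int_0^{L/2}(1+\alpha\xi)f(u)\,d\xi\geq 0$ forces $1+\alpha L/2<0$; your direct estimate $I_1>-\gamma^*$ from $\sqrt{2(C-F(u))}\leq v_1^+$ on $[u_1^+,0]$ is self-contained and avoids the slight awkwardness that \eqref{Greenb} presupposes $2+\alpha L\neq 0$. For (i) the paper bounds $v(u)$ from below by $\sqrt{2(C-s_0)}$ over the \emph{entire} interval $[u_1^+,F^{-1}(C)]$ to get its upper bound on $L_1$, which is not literally valid near the endpoint $u=F^{-1}(C)$ where $v(u)\to 0$; your splitting at $u=0$, with the negative part sandwiched between $-\gamma^*$ and $-\gamma^*v_1^+/\sqrt{2(C-s_0)}$ and the positive part identified as a Dirichlet half-length at energy $C-s_0$ that vanishes by the superlinearity argument of \eqref{C2}, repairs exactly this point. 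The trade-off is that you inherit whatever rigor \eqref{C2} itself has (the paper asserts it without a quantitative proof), but the overall structure of your argument is sound and, where it diverges, an improvement.
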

\begin{proof}
(i)  If $C>s_0$,then
$\sqrt{2(C-s_0} <v_1^+< \sqrt{2C})$ and consequently $\gamma^{*}\sqrt{2C}<u_1^+<\gamma^{*}\sqrt{2(C-s_0}) $.
Introducing these estimates into  $
L_1(C)=2\int_{u_1^+}^{F^{-1}(C)} \frac{du}{\sqrt{\lambda}v(u)} $ (s. \eqref{LS}),
we obtain
$$
 2\frac{ F^{-1}(C)-\gamma^{*}\sqrt{2(C-s_0})}{\sqrt{\lambda2C}} \leq L_{1}(C)\leq 2\frac{F^{-1}(C)-\gamma^{*}\sqrt{2C}}{\sqrt{\lambda2(C-s_0)} }.
$$
The first claim follows by letting $C\to \infty$ and the superlinearity of $f$.
\medskip

\noindent
(ii) follows from \eqref{Greenb}
$$
u_{\max}=u\left(\frac{L}{2}\right)= \frac{\lambda}{\alpha}\int_0^{L/2}(1+\alpha \xi)f(u(\xi))\:d\xi.
$$
If f $C\geq s_0$, then $u_{\max}\geq 0$. Since $\alpha$ is negative, the integral has also to be negative. Hence $1+\alpha \frac{L}{2}<0$ which establishes (ii).
\medskip

\noindent
(iii) If $\tilde C<C<s_0$ it follows from the phase plane 
that 
\begin{align}\label{vergleich}
L_2(C)<L_1(C) \quad \tilde C<C<s_0 \quad\tx{and} \quad L_{2}(\tilde C)=L_{1}(\tilde C).
\end{align}
\medskip

\noindent
(iv) 
From \eqref{diffLC} we get (replacing  $L_2$ by $L_1$)
$$
\sqrt{\lambda}\frac{dL_1(C)}{dC}=\frac{2}{f(\gamma^{*}v_1^+)(v_1^++\gamma^{*}f(\gamma^{*}v_1^+))}-2\int_0^{v_1^+}\frac{f'(u(v))}{f^3(u(v))}dv.
$$
If $C\to \tilde C$, \eqref{contact} yields $\gamma^{*}f(\gamma^{*} v_1^+)+v_1^+=0$. 
Consequently
$
\lim_{C\to \tilde C}  \sqrt{\lambda}\frac{dL_1(C}{dC}=\infty.
$
\end{proof}
\begin{remark} \label{RE1} 
1. If $\alpha L=-2$, the Green's function doesn't exist.\\
2. The 1-d Steklov eigenvalue problem is of the form $\phi''=0$ in $(0,L)$, $-\phi'(0)=\mu \phi(0)$,
$\phi'(L)=\mu \phi(L)$. It has two eigenvalues $\mu_1=0$ and $\mu_2=\frac{2}{L}$. The eigenfunction corresponding to $\mu_2$ is $\phi(x)= c(x -\frac{L}{2})$.  If a solution of \eqref{general} exists for $\alpha =-2/L$, then it has to satisfy the compatibility condition
$$
\int_0^L((x -\frac{L}{2})f(u(x))\:dx=0.
$$
\end{remark}
Lemma \ref{Lsymm} leads to
\begin{lemma} \label{auxiliary} Let $\alpha$ and $\lambda$ be fixed. Consider $L_1(C)$ where $C\in (\tilde C,\infty)$. Let $\overline{L}_1:=\max_{C>\tilde{C}}L_1(C)$. Then 
\begin{itemize}
\item[(i)]
$L_1(C)$ attains its maximum at points $C_m \in (\tilde C,\infty)$. Hence for any $-\frac{2}{\alpha} <L<\overline{L}_1$ there exist at least two values of $C$ such that $L=L_1(C)$.
\item[(ii)] If $L=-\frac{2}{\alpha}$, there exist a bounded solution for some $C\leq s_0$. If $C\to \infty$ the corresponding  solutions become unbounded.
\end{itemize}
\end{lemma}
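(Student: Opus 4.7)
The plan is to assemble the four limit/boundary properties of $L_1$ collected in Lemma~\ref{Lsymm} with the continuity of $C\mapsto L_1(C)$ on $[\tilde C,\infty)$ and to apply the intermediate value theorem twice. Continuity up to the left endpoint $\tilde C$ is inherited from the smooth dependence of the integrand in \eqref{LS} on $C$ (the factor $\sqrt{2(C-F(u))}$ stays bounded away from zero along the relevant piece of trajectory), and the blow-up $\sqrt{\lambda}\,dL_1/dC\to +\infty$ as $C\to\tilde C$ recorded in Lemma~\ref{Lsymm}(iv) is consistent with right-continuity at $\tilde C$.

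For part (i), I record using Lemma~\ref{Lsymm} (iii), (ii), (i) that
\begin{equation*}
L_1(\tilde C)<-\tfrac{2}{\alpha},\qquad L_1(s_0)>-\tfrac{2}{\alpha},\qquad \lim_{C\to\infty}L_1(C)=-\tfrac{2}{\alpha}.
\end{equation*}
In particular $\overline{L}_1\geq L_1(s_0)>-\tfrac{2}{\alpha}$. Choose $\eta$ strictly between $\max\{L_1(\tilde C),-2/\alpha\}$ and $\overline{L}_1$. By continuity at $\tilde C$ there is $\delta>0$ with $L_1<\eta$ on $[\tilde C,\tilde C+\delta]$, and by the limit at infinity there is $M$ with $L_1<\eta$ on $[M,\infty)$. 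Then the continuous function $L_1$ attains its maximum on the compact set $[\tilde C+\delta,M]$, and this maximum equals $\overline{L}_1$; let $C_m$ be a maximizer, so $C_m\in(\tilde C,\infty)$. For any $L\in(-2/\alpha,\overline{L}_1)$ the IVT applied on $[\tilde C,C_m]$ (where $L_1$ passes from $L_1(\tilde C)<-2/\alpha<L$ up to $L_1(C_m)=\overline{L}_1>L$) and on $[C_m,M']$ with $M'$ so large that $L_1(M')<L$ (where $L_1$ descends from $\overline{L}_1$ to a value below $L$) produces two points $C_1<C_m<C_2$ with $L_1(C_i)=L$.

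For part (ii), the IVT on $[\tilde C,s_0]$ with $L_1(\tilde C)<-2/\alpha<L_1(s_0)$ yields $C_*\in(\tilde C,s_0)$ with $L_1(C_*)=-2/\alpha$. The associated trajectory $\widehat{P_1^+P_1^-}$ describes a bounded symmetric solution because its amplitude $u_{\max}=F^{-1}(C_*)$ is finite. The last assertion reflects the fact that $L_1(C)\to -2/\alpha$ from above as $C\to\infty$, whereas the amplitude $u_{\max}=F^{-1}(C)$ tends to $+\infty$ by the superlinearity condition \eqref{conditions}; thus the family of symmetric solutions parametrized by large $C$, whose lengths converge to $-2/\alpha$, cannot remain uniformly bounded.

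The main technical issue is verifying that $\overline{L}_1$ is actually attained at an interior point of $(\tilde C,\infty)$. This is exactly where the combination of properties (iii) (forcing the value at the left endpoint strictly below $-2/\alpha$) and (i) (pulling $L_1$ back to $-2/\alpha$ at infinity) is essential: both endpoint behaviors give values strictly less than $\overline{L}_1$, so a standard compactness argument places the maximizer in the interior. Once this is in hand, the two applications of the intermediate value theorem are routine.
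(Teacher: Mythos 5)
Your argument is correct and follows the paper's intended route: the paper offers no separate proof, presenting this lemma as an immediate consequence of Lemma \ref{Lsymm}, and your IVT/compactness argument is exactly the standard fleshing-out of that implication using properties (i)--(iii) of $L_1$. (The only cosmetic quibble is that in the $u$-variable form of \eqref{LS} the integrand does blow up at the upper limit $u=F^{-1}(C)$; continuity of $L_1$ in $C$ is cleaner from the $v$-variable form, where the integrand $1/(\sqrt{\lambda}f)$ is bounded and $v_1^+$ depends continuously on $C$.)
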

Next we consider problem \eqref{general}, \eqref{Robin} with fixed $L\in (L_1(\tilde C),\overline{L}_1)$.
 \begin{lemma} \label{2sol}
 Assume \eqref{conditions} and $f''>0$. For fixed $\alpha$ and $\lambda$ there exists at most two solutions.
 \end{lemma}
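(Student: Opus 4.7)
The plan is to reduce Lemma \ref{2sol} to the statement that the time map $C \mapsto L_1(C)$, $C \in (\tilde C, \infty)$, is of single-hump shape under the assumption $f'' > 0$; the count then follows from Rolle's theorem.

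First, since $L > L_1(\tilde C) = L_2(\tilde C)$ and, by Theorem \ref{auxiliary1}, $L_2$ is strictly decreasing from $L_2(\tilde C)$ to $0$ on $(\tilde C, s_0)$, the branch $\widehat{P_2^+ P_2^-}$ contributes no symmetric solution at this value of $L$. Every symmetric solution is therefore parametrized by some $C > \tilde C$ with $L_1(C) = L$, and it suffices to bound the number of such $C$ by $2$.

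Next I would argue by contradiction: if three such values $C_1 < C_2 < C_3$ exist, Rolle's theorem gives at least two zeros of $L_1'$ in $(\tilde C, \infty)$. To derive a contradiction I would show that $L_1'(C)$ has at most one zero on that interval. The plan is to parametrize the branch by $v_1^+$ via the smooth, strictly increasing correspondence $C = F(\gamma^{*} v_1^+) + (v_1^+)^2/2$ (monotonicity was used already during the proof of Theorem \ref{auxiliary1}), rewrite \eqref{diffLC} as a function of $v_1^+$, and differentiate once more. The boundary contribution $2/[f(u_1^+)(v_1^+ + \gamma^{*} f(u_1^+))]$ simplifies through the identity $u_1^+ = \gamma^{*} v_1^+$, while the derivative of the integral $-2\int_0^{v_1^+} f'(u)/f^3(u)\,dv$ introduces, after differentiating along the family $u(v;C)$, a term containing $f''(u)$. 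Under $f''>0$ these pieces should combine to a strictly negative value at every point where $L_1'(C)=0$, so each critical point of $L_1$ is a strict local maximum and hence unique.

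The main obstacle will be the final sign determination. The boundary term in \eqref{diffLC} is singular as $C \to \tilde C$, where the denominator $v_1^+ + \gamma^{*} f(u_1^+)$ vanishes by the tangency identity \eqref{contact}, and it must be balanced against the integral term in a controlled way before the hypothesis $f''>0$ can be deployed to fix the sign. This is the essential role of strict convexity of $f$: without it, an $S$-shaped time map (and therefore three symmetric solutions for some values of $L$) would be consistent with all the other properties of $L_1(C)$ established in Lemma \ref{Lsymm} and Lemma \ref{auxiliary}.
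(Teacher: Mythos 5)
Your proposal is a genuinely different strategy from the paper's (a second-derivative analysis of the time map $L_1(C)$ rather than a comparison argument), but it has a gap exactly where you place the ``main obstacle'': the sign determination at critical points is never carried out, and it is not a routine computation. Differentiating \eqref{diffLC} once more produces, from the integral term, an integrand proportional to $\bigl(f''(u)f(u)-3f'(u)^2\bigr)/f^5(u)$, which is \emph{indefinite} in sign even when $f''>0$; it also produces the derivative of the boundary term $2/\bigl[f(u_1^+)(v_1^+ +\gamma^{*}f(u_1^+))\bigr]$, which blows up near $\tilde C$ by \eqref{contact} and must be controlled against the integral contribution. Nothing in the proposal shows that these pieces combine to a negative quantity wherever $L_1'(C)=0$, and convexity of $f$ alone does not visibly force this. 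Note that even in the Dirichlet case the paper does not argue via $d^2L/dC^2$; it works with the sign of $dL/dC$ through $g(u)=f^2(u)-2f'(u)F(u)$ and the concavity of $g$, and in the Robin case with $\alpha<0$ the extra boundary term makes the analogous bookkeeping harder still. So the reduction to ``$L_1$ has a single hump'' is a reasonable target, but the proof of that target is missing, and that is the entire content of the lemma.

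For comparison, the paper sidesteps the time map entirely. Assuming three solutions with $C_1<C_2<C_3$, it uses that distinct solutions must intersect (Lemma \ref{alpha<0}) and the symmetry about $L/2$ to locate common level sets $u_1=u_2=a$ at $\gamma L$ and $u_3=a$ at some $\ell$; truncating and shifting turns $u_1,u_2$ (and, after rescaling in the case $\ell\neq\gamma L$, also $u_3$) into solutions of a Dirichlet problem $\tilde u''+\lambda f(\tilde u+a)=0$ on a common subinterval. The contradiction then comes from the already-established two-solution bound and spectral comparison for the convex Dirichlet problem (Corollary \ref{C01}, Lemma \ref{genprop1} part 4, Barta's inequality) plus the monotonicity in $\lambda$ of the minimal and maximal Dirichlet solutions. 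If you want to salvage your route, you would need to actually prove the single-hump property of $L_1$, e.g.\ by adapting the $g(u)$ identity to account for the Robin boundary term; as written, the argument is incomplete.
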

\begin{proof}
Suppose that there exist three solutions $u_i(x)$ $i=1,2,3$, corresponding in the phase to the trajectories $\widehat{P^+_1P^-_1}$ with $C_1<C_2<C_3$. This means that $\max u_1<\max u_2<\max u_3$. By Lemma \ref{alpha<0}
they intersect each other.  Suppose that $u_1(\gamma L)=u_2(\gamma L)=a$, where $0<\gamma<1/2$. In view of the symmetry $u_1(L(1-\gamma))=u_2(L(1-\gamma))=a$. 
Since $\max u_1<\max u_2<\max u_3$ and $u_3(0)=u_3(L)<u_i(0)$ for $i=1,2$ there exists an $\ell\in(0,\frac{L}{2})$ such that $u_3(\ell)=a$. By symmetry $u_3(L-\ell)=a$. There are three possible cases: $\ell=\gamma L$, $\ell\in(0,\gamma L)$ and $\ell\in(\gamma L,\frac{L}{2})$. The last two cases are illustrated by Figure \ref{fig:IntersectingSolutionsA} and \ref{fig:IntersectingSolutionsB}  

\begin{figure}[H]
	\centering
	\begin{subfigure}{0.4\textwidth}\centering
	\includegraphics[width=80mm]{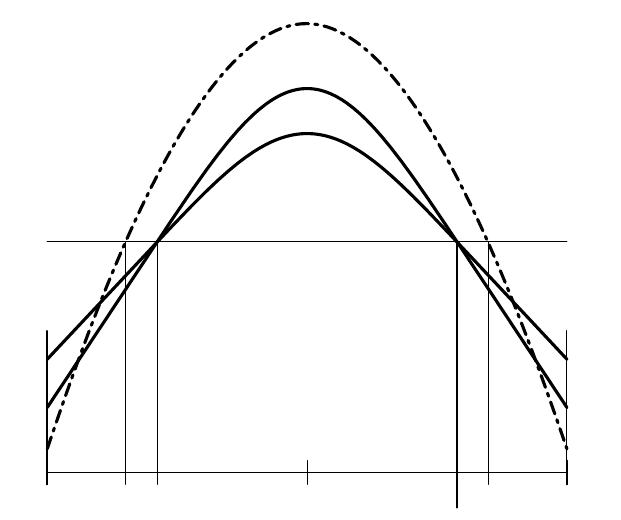}%
	\drawat{-44mm}{0.5mm}{$L/2$}%
	\drawat{-8mm}{0.5mm}{$L$}%
	\drawat{-75mm}{0.5mm}{$0$}%
	\drawat{-65mm}{0.5mm}{\small$\ell$}%
	\drawat{-61mm}{0.5mm}{\small$\gamma L$}%
	\drawat{-22.5mm}{-2mm}{\small$L\!-\!\gamma L$}%
	\drawat{-19.mm}{1.5mm}{\small$L\!-\!\ell$}%
	\drawat{-78mm}{35mm}{$a$}%
	\drawat{-34mm}{62mm}{$u_3$}%
	\drawat{-36.5mm}{55mm}{$u_2$}%
	\drawat{-38mm}{50mm}{$u_1$}%
	
	\caption{\label{fig:IntersectingSolutionsA}$0<\ell<\gamma L$}
	\end{subfigure}%
	\hspace{1cm}
	\begin{subfigure}{0.4\textwidth}\centering
	\includegraphics[width=80mm]{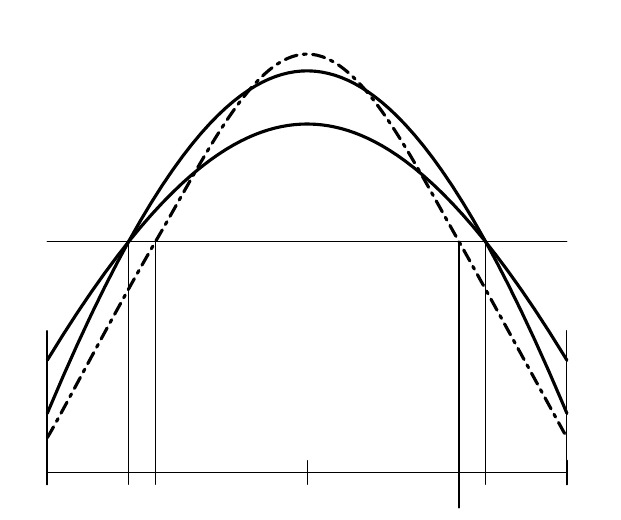}%
	\drawat{-44mm}{0.5mm}{$L/2$}%
	\drawat{-8mm}{0.5mm}{$L$}%
	\drawat{-75mm}{0.5mm}{$0$}%
	\drawat{-67.5mm}{0.5mm}{\small$\gamma L$}%
	\drawat{-60.7mm}{0.5mm}{\small$\ell$}%
	\drawat{-22.mm}{-2mm}{\small$L\!-\!\ell$}%
	\drawat{-19.5mm}{1.5mm}{\small$L\!-\!\gamma L$}%
	\drawat{-78mm}{35mm}{$a$}%
	\drawat{-36mm}{59mm}{$u_3$}%
	\drawat{-26.5mm}{50mm}{$u_2$}%
	\drawat{-38mm}{51.5mm}{$u_1$}%
	
	\caption{\label{fig:IntersectingSolutionsB}$\gamma L < \ell<\frac{L}{2}$}
	\end{subfigure}
	
	\caption{\label{fig:IntersectingSolutions}Intersecting Solutions}
\end{figure}

\medskip

\noindent
Set  $\tilde L= L(1-2\gamma)$ and consider the interval $(0,\tilde{L})$. After a suitable shift of the variable,
the functions $\tilde u_i(x):=u_i(x+ \gamma L)-a$ for $i=1,2$ are solutions of 
\begin{align} \label{DirichletH}
\tilde u_i''+ \lambda f(\tilde u_i +a)=0  \quad\tx{in} (0,\tilde L), \quad \tilde u_i(0)=\tilde u_i(\tilde L)=0.
\end{align}
Clearly $\tilde u_1(x)$ is the minimal solution in $(0,\tilde L)$. 
\medskip

\noindent
We distinguish between three cases.

1. If $\ell=\gamma L$, then $\tilde u_3(x)=u_3((x+\gamma L)-a$ is also a solution of \eqref{DirichletH}.
By Corollary \ref{C01} problem \eqref{DirichletH} has at most two solutions. Hence this situation is excluded.

2. Assume $\ell <\gamma L$ (s. Fig 8 (a)). Then $\tilde u_3(x)$ satisfies the same equation \eqref{DirichletH} as $\tilde u_i(x)$, $i=1,2$. However on the boundary $\tilde u_3(0)=\tilde u_3(\tilde L)>0$. Moreover $\tilde u_3(x)>\tilde u_2(x)$. Consider $d(x)= \tilde u_3-\tilde u_2$. Since $f''>0$ we obtain
$$
0=d''(x)+\lambda(f(\tilde u_3)-f(\tilde u_2))>d''(x)+\lambda f'(\tilde u_2)d(x).
$$
Barta's inequality implies
$$
\lambda >\nu_1,
$$
where $\nu_1$ is the lowest eigenvalue of $\phi'' + \nu f'(\tilde u_2 +a)\phi=0$ in $(0,\tilde L)$, $\phi(0)=\phi(\tilde L)=0$. Since $\tilde u_2$ is a non-minimal solution we can apply Lemma \ref{genprop1} 4. which holds also for Dirichlet boundary conditions and obtain a contradiction.

3. Let $\ell>\gamma L$. The function $\tilde u_3(x):= u_3(x-\ell)-a$ solves the same equation in $(0,\tilde \ell)$, $\tilde \ell= L-2\ell$ with $\tilde u_3(0)=\tilde u_3(\tilde \ell)=0$. The function $U(x)=\tilde u_3(\frac{\tilde L x}{\tilde \ell})$  is a solution of \eqref{DirichletH} with $\tilde \lambda:= \frac{\lambda \tilde L^2}{\tilde \ell^2}>\lambda$. It is well-known and follows also from the previous discussion
that the Dirichlet problem has for fixed $\tilde L$ at most two solutions, a minimal solution $v$ and a maximal solution $V$.

If $\lambda$ increases $\max v$ increases and $\max V$ decreases. Recall that $\tilde u_i$, $i=1,2$ are solutions corresponding  $\lambda <\lambda$. Since $\max U>\max \tilde u_2>\max \tilde u_1$ this is impossible.  Consequently
there is no solution $u_3(x)$. This completes the proof.
\end{proof}
{\sc Consequence.} For symmetric solutions the length $L(C)$ has for given $\lambda$ and $\alpha<0$ the following form.

\begin{figure}[H]
\centering
\includegraphics{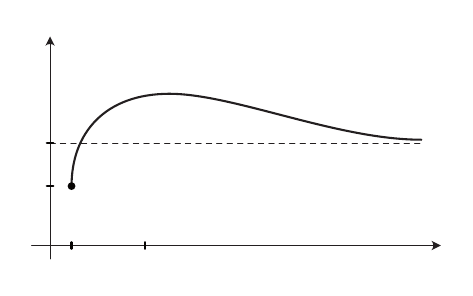}%
\drawat{-.8cm}{1cm}{$C$}%
\drawat{-5.7cm}{.35cm}{$s_0$}%
\drawat{-6.9cm}{.35cm}{$\tilde{C}$}%
\drawat{-8.4cm}{1.75cm}{$L_1(\tilde{C})$}%
\drawat{-8.4cm}{2.5cm}{$-2/\alpha$}%
\drawat{-7.7cm}{4.2cm}{$L_1$}%

\caption{$L_1(C)$}
\end{figure}
In summary we have 
\begin{theorem} Assume \eqref{conditions} and $f''>0$. Let $\alpha<0$ and $\lambda$ be fixed. Then 
\begin{itemize}
\item[(i)] for $-\frac{2}{\alpha}<L<\max L_1(C)$ there exist two symmetric solutions,
\item[(ii)] for $L=\max L_1(C))$ there exist one symmetric solution,
\item[(iii)] for $L<-\frac{2}{\alpha}$ there exist one symmetric solution.
\end{itemize}
\end{theorem}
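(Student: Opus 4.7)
The plan is to combine the quantitative behavior of the two length functions $L_{1}(C)$ and $L_{2}(C)$, established in Theorem \ref{auxiliary1} and Lemma \ref{Lsymm}, with the at-most-two-solutions rigidity of Lemma \ref{2sol}, and then conclude by counting preimages $\{C : L_{i}(C) = L\}$ for each prescribed $L$.

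First I assemble the boundary data. By Theorem \ref{auxiliary1}, $L_{2} : (\tilde C, s_{0}) \to (0, L_{2}(\tilde C))$ is continuous and strictly decreasing, hence a bijection; every $L \in (0, L_{2}(\tilde C))$ is attained by a unique $C$ and produces one trajectory $\widehat{P_{2}^{+}P_{2}^{-}}$. By Lemma \ref{Lsymm}, $L_{1} : [\tilde C, \infty) \to \R$ is continuous with $L_{1}(\tilde C) = L_{2}(\tilde C) < -2/\alpha$, with $L_{1}(C) > -2/\alpha$ for $C \geq s_{0}$, and with $L_{1}(C) \to -2/\alpha$ as $C \to \infty$. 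Consequently $\overline{L}_{1} := \max_{C > \tilde C} L_{1}(C) > -2/\alpha$ is attained at some interior critical point.

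The next step is to exploit Lemma \ref{2sol} in order to rule out oscillations of $L_{1}$. If the graph of $L_{1}$ failed to be unimodal on $[\tilde C, \infty)$, the intermediate value theorem would furnish three distinct $C$'s at which $L_{1}$ takes a common value, hence three distinct symmetric solutions of type $\widehat{P_{1}^{+}P_{1}^{-}}$, contradicting Lemma \ref{2sol}. Together with $\lim_{C \searrow \tilde C} \sqrt{\lambda}\, dL_{1}/dC = +\infty$ from item (iv) of Lemma \ref{Lsymm}, which forbids an initial descent, this forces $L_{1}$ to be strictly increasing from $L_{1}(\tilde C)$ up to $\overline{L}_{1}$ on some $[\tilde C, C^{\ast}]$ and then strictly decreasing from $\overline{L}_{1}$ down to the asymptote $-2/\alpha$ on $[C^{\ast}, \infty)$.

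With this shape of $L_{1}$ in hand, the counting is straightforward. For (i), $-2/\alpha < L < \overline{L}_{1}$: $L_{1}$ meets the level $L$ once on each branch, while $L_{2}(C) \leq L_{2}(\tilde C) < -2/\alpha < L$ excludes $L_{2}$-preimages, giving exactly two symmetric solutions. For (ii), $L = \overline{L}_{1}$: only the unique maximum of $L_{1}$ contributes. For (iii), $L < -2/\alpha$: either $L_{1}(\tilde C) < L$ and $L_{1}$ is met exactly once on its ascending branch while $L_{2}$ contributes nothing, or $L \leq L_{1}(\tilde C) = L_{2}(\tilde C)$ and $L_{2}$ supplies the unique preimage while $L_{1}$ contributes none since its range is $[L_{1}(\tilde C), \overline{L}_{1}]$. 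The main obstacle I anticipate is the rigidity step itself: converting Lemma \ref{2sol} into a genuine unimodality statement requires care with the asymptotic limit $C \to \infty$, where $L_{1}$ approaches but never attains $-2/\alpha$, so the IVT must be applied on compact subintervals and then combined with the asymptote to rule out hidden oscillations near infinity.
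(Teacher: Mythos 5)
Your proposal is correct and follows essentially the route the paper intends: the paper states this theorem as a summary ("In summary we have") of Theorem \ref{auxiliary1}, Lemma \ref{Lsymm}, Lemma \ref{auxiliary} and Lemma \ref{2sol}, and your write-up assembles exactly these ingredients, using Lemma \ref{2sol} to force unimodality of $L_1$ and Lemma \ref{Lsymm}(iv) to exclude an initial descent before counting preimages of $L$ under $L_1$ and $L_2$. If anything, your version is more explicit than the paper's about the case split at $L_1(\tilde C)$ in part (iii) and about why the $C\to\infty$ asymptote does not create extra preimages.
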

\subsection{Asymmetric solutions}
Asymmetric solutions correspond to trajectories $\widehat{P^+_1P^-_2}$ and $\widehat{P^+_2P^-_1}$ in the phase plane. 

We restrict our discussion to $\widehat{P_1^+P_2^-}$ since the solution corresponding to $\widehat{P^+_2P^-_1}$ are obtained by reflexion at $L/2$.  

\noindent
The corresponding lengths of the interval are denoted by $L_{12}$ resp. $L_{21}$. Clearly $L_{12}=L_{21}$. As in the previous sections these lengths depend on $C$ for fixed $\lambda>0$ and $\alpha<0$.
\medskip

\noindent
According to Table 1 there are two types of asymmetric solutions. For $s_0\leq C$, the trajectory $\widehat{P_1^+P^-_2}$ corresponds to a monotone solution and for $\tilde C<C<s_0$ to a non-monotone solution.
\begin{lemma} \label{LA1}
\begin{enumerate}

\item For increasing asymmetric solutions ($C\geq s_0$)
$$
L_{12}(C)= \int_{v_2^-}^{v_1^+}\frac{dv}{\sqrt{\lambda}f(F^{-1}(C-\frac{v^2}{2}))}=\int_{u_1^+}^{u_2^-}\frac{du}{\sqrt{2\lambda(C-F(u))}}.
$$
Similarly the length of the decreasing asymmetric solutions is obtained by interchanging $ P^+_1$, $P_2^-$ by $P_2^+$, $P_1^-$. 
\item For non-monotone asymmetric solutions ($C\in (\tilde C,s_0)$)
\begin{align*}
L_{12}(C)=L_{21}(C)&=\int_0^{v_1^+}\frac{dv}{\sqrt{\lambda}f(F^{-1}(C-\frac{v^2}{2}))}+\int_{v_2^1}^0\frac{dv}{\sqrt{\lambda}f(F^{-1}(C-\frac{v^2}{2}))}\\
&=\int_{u_1^-}^{F^{-1}(C)}\frac{du}{\sqrt{2\lambda(C-F(u))}} + \int_{u_2^-}^{F^{-1}(C)}\frac{du}{\sqrt{2\lambda(C-F(u))}}.
\end{align*}
\end{enumerate}
\end{lemma}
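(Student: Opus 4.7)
Both identities follow directly from the general length formula \eqref{length2} applied to the trajectory $\widehat{P_1^+P_2^-}$, combined with the equivalent $u$-parametrization $v=\pm\sqrt{2(C-F(u))}$ coming from \eqref{phase1}. The key starting observation is that along any trajectory $v'(x)=-\sqrt{\lambda}\,f(u)<0$, so $v$ is strictly decreasing in $x$; consequently on $\widehat{P_1^+P_2^-}$ the variable $v$ runs monotonically from $v_1^+$ down to $v_2^-$, and the $v$-integral in \eqref{length2} produces the $v$-formulas of the lemma once the sign of the endpoints is identified.

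For the increasing (monotone) case $C\geq s_0$, the classification preceding the theorem places both $P_1^+$ and $P_2^-$ in the half-plane $\{v>0\}$. Hence $v>0$ on the whole trajectory, so $u'=\sqrt{\lambda}\,v>0$ and $u$ increases monotonically from $u_1^+$ to $u_2^-$. The first equality is immediate from \eqref{length2}. For the $u$-integral, substitute $v=+\sqrt{2(C-F(u))}$ into $dx=du/(\sqrt{\lambda}\,v)$ and integrate over $[u_1^+,u_2^-]$. The claim for the mirror trajectory $\widehat{P_2^+P_1^-}$ follows from the evenness of the integrand in $v$ via the substitution $v\mapsto-v$.

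For the non-monotone case $\tilde C<C<s_0$, the same classification gives $v_1^+>0$ and $v_2^-<0$, so the trajectory crosses the $u$-axis at a unique $x^\ast\in(0,L)$, where $u(x^\ast)=F^{-1}(C)$ is the maximum of $u$. Splitting the $v$-integral from \eqref{length2} at $v=0$ produces the two displayed $v$-integrals. To rewrite each piece as a $u$-integral, use $v=+\sqrt{2(C-F(u))}$ on $[0,x^\ast]$, where $u$ increases from $u_1^+$ to $F^{-1}(C)$, and $v=-\sqrt{2(C-F(u))}$ on $[x^\ast,L]$, where $u$ decreases from $F^{-1}(C)$ back to $u_2^-$; on the second piece the minus sign is absorbed by reversing the limits of integration. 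Using the identities $u_1^+=u_1^-$ and $u_2^+=u_2^-$ (a consequence of the symmetry of $\mathcal{K}_C$ in $v$ together with $\mathcal{L}^-$ being the $v$-reflection of $\mathcal{L}^+$), the lower limits may equivalently be written as $u_1^-$ and $u_2^-$ as in the statement, and the identity $L_{12}=L_{21}$ follows once more from the $v\mapsto -v$ symmetry.

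No real obstacle arises: the result is fundamentally a substitution. The one point requiring care is the sign of $v$ in the non-monotone case when switching to the $u$-variable; splitting the integral at $v=0$ resolves it cleanly.
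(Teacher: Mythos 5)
Your argument is correct and is exactly the computation the paper leaves implicit: Lemma \ref{LA1} is stated there without proof, as a direct application of the length formula \eqref{length2} together with the substitution $v=\pm\sqrt{2(C-F(u))}$ from \eqref{phase1} and the sign information on $v_1^+,v_2^-$ from the classification of the intersection points. Your handling of the split at $v=0$ in the non-monotone case and the identification $u_1^+=u_1^-$, $u_2^+=u_2^-$ via the reflection symmetry supplies precisely the details the paper omits, so nothing further is needed.
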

\subsubsection{Monotone solutions}
We start with the discussion  monotone solutions. They are represented  by $\widehat{P^+_1P^-_2}$ and $\widehat{P^+_2P^-_1}$with  $C>s_0$. We restrict our discussion to the monotone increasing solutions $\widehat{P^+_1P^-_2}$ since the monotone decreasing solutions are obtained by reflexion are $L/2$.

\begin{figure}[H]
	\centering
	\begin{subfigure}{0.35\textwidth}\centering
	\includegraphics[width=60mm]{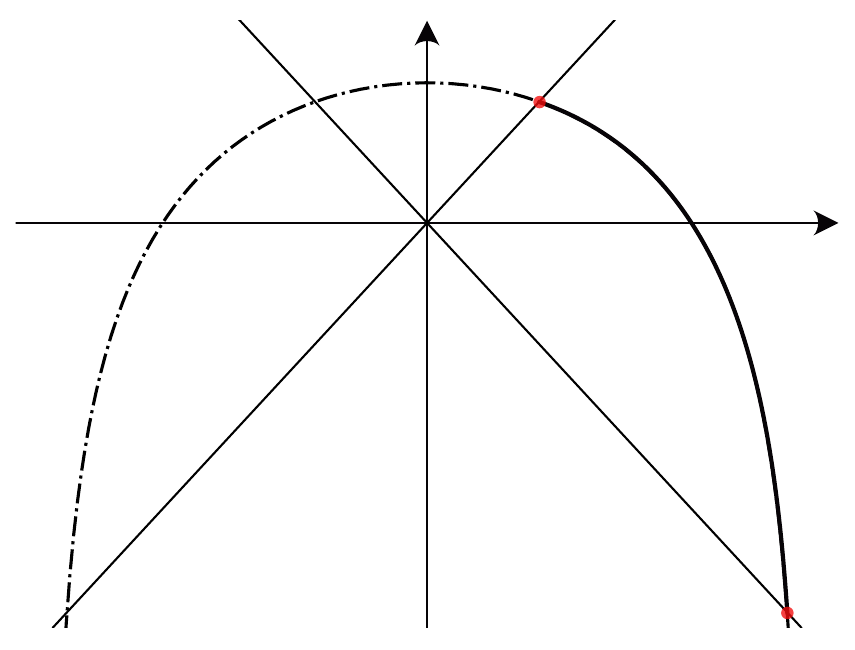}%
	\drawat{-4mm}{3.5mm}{$P^+_1$}%
	\drawat{-26mm}{41mm}{$P^-_2$}%
	\drawat{-33.5mm}{43mm}{$u$}%
	\drawat{-3mm}{31.5mm}{$v$}%

	\caption{Phase plane trajectory $\widehat{P^+_1P^-_2}$}
	\end{subfigure}%
	\begin{subfigure}{0.35\textwidth}
	\centering
	\includegraphics[width=50mm]{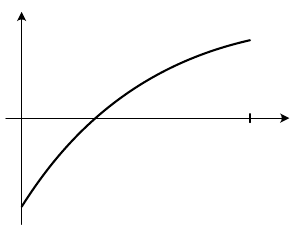}%
	\drawat{-3mm}{21.5mm}{$x$}%
	\drawat{-9.5mm}{15mm}{$L$}%
	
	\caption{Monotone solution}
	\end{subfigure}
	
	\caption{}
\end{figure}

The trajectory can be split in two pieces:
\begin{eqnarray*}
\widehat{P^+_1P^-_2}=\widehat{P^+_1P^{0}}\cup \widehat{P^{0}P^-_2}
\qquad\hbox{where}\qquad P^0=(\sqrt{2(C-s_0)},0).
\end{eqnarray*}
Hence the length $L_{12}$ consists of two pieces 
\begin{eqnarray}\label{LAM2}
L_{12}=L_{10}+L_{02}
\end{eqnarray} 
where
\begin{eqnarray*}
L_{10}(C)= \frac{1}{\sqrt{\lambda}}\int^{v_1^+}_{\sqrt{2(C-s_0)}}\frac{1}{f(u(v))}\:dv
\quad\hbox{and}\quad
L_{02}(C)= \frac{1}{\sqrt{\lambda}}\int^{\sqrt{2(C-s_0)}}_{v_2^-}\frac{1}{f(u(v))}\:dv.
\end{eqnarray*}
Clearly
\begin{align} \label{LM1}
\lim_{C\to s_0} L_{02}(C)=0.
\end{align}
\begin{lemma} \label{LemmaM1}
We have 
\begin{align*}
(i) \quad L_{02}(C)<-\frac{1}{\alpha},\\
(ii) \quad \lim_{C\to \infty}L_{02}(C)=0.
\end{align*}
\end{lemma}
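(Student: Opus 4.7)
The plan is first to rewrite $L_{02}(C)$ as an integral in $u$ rather than $v$. Using the parametrization $u=F^{-1}(C-v^2/2)$, for which $dv=-f(u)\,du/v$ with $v=\sqrt{2(C-F(u))}$ on this branch, one obtains
$$
L_{02}(C)=\frac{1}{\sqrt{\lambda}}\int_{0}^{u_2^-}\frac{du}{\sqrt{2(C-F(u))}}.
$$
Combining the boundary identity $u_2^-=-\gamma^{*}v_2^-$ with $C=F(u_2^-)+\tfrac12(v_2^-)^2$ (both holding because $P_2^-\in \mathcal{L}^-\cap\mathcal{K}_C$) gives the two algebraic ingredients I will use throughout: $u_2^-/v_2^- = -\sqrt{\lambda}/\alpha$, and $C-F(u)=\tfrac12(v_2^-)^2+(F(u_2^-)-F(u))$.

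For (i), the strict monotonicity of $F$ yields $F(u_2^-)-F(u)\geq 0$, strictly on $[0,u_2^-)$, so $\sqrt{2(C-F(u))}\geq v_2^-$ with strict inequality almost everywhere. Substituting into the integral gives
$$
L_{02}(C)<\frac{1}{\sqrt{\lambda}}\cdot\frac{u_2^-}{v_2^-}=-\frac{1}{\alpha}.
$$

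For (ii), I first note that the equation $C=F(u_2^-)+\alpha^{2}(u_2^-)^2/(2\lambda)$ forces $u_2^-\to\infty$ as $C\to\infty$, since its right-hand side is strictly increasing in $u_2^-$. I then split the $u$-integral at $u_2^-/2$. On $[0,u_2^-/2]$ the monotonicity of $f$ gives $F(u_2^-)-F(u)\geq (u_2^-/2)\,f(u_2^-/2)$, so the lower piece is at most $\tfrac{1}{2\sqrt{\lambda}}\sqrt{u_2^-/f(u_2^-/2)}$. On $[u_2^-/2,u_2^-]$, I substitute $w=u_2^--u$ and use $\int_{u_2^--w}^{u_2^-}f(t)\,dt\geq w f(u_2^-/2)$ to bound that piece by $\int_0^{u_2^-/2}dw/\sqrt{(v_2^-)^2+2wf(u_2^-/2)}$, which can be computed explicitly and bounded, via $\sqrt{a+b}-\sqrt{a}\leq\sqrt{b}$, by $\tfrac{1}{\sqrt{\lambda}}\sqrt{u_2^-/f(u_2^-/2)}$. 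The superlinearity assumption $f(u)/u\to\infty$ immediately yields $u_2^-/f(u_2^-/2)\to 0$, so $L_{02}(C)\to 0$.

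The subtle point I expect to navigate is verifying that superlinearity alone (rather than a stronger growth condition such as $f(u)/u^2\to\infty$) suffices for (ii). This is precisely what the midpoint split accomplishes: both halves are controlled by the single quantity $\sqrt{u_2^-/f(u_2^-/2)}$, whose vanishing is exactly the hypothesis $f(u)/u\to\infty$ applied at $u=u_2^-/2$.
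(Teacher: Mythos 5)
Your proof is correct, and it follows a genuinely different route from the paper's. The paper interprets the arc $\widehat{P^{0}P^-_2}$ as a solution of the mixed problem $u''+\lambda f(u)=0$, $u(0)=0$, $u'(L)=-\alpha u(L)$, writes down the associated Green's function, and reads off (i) from the identity $u_{\max}=u(L)=\frac{\lambda}{1+\alpha L}\int_0^L\xi f(u(\xi))\,d\xi$: positivity of $u_{\max}$ and of the integral forces $1+\alpha L>0$, i.e. $L<-1/\alpha$. For (ii) it combines the concavity bound $u(x)\ge u_{\max}x/L$ with the linear minorant $f(y)\ge f(0)+f'(0)y$ coming from convexity, obtaining an inequality that forces $L\to 0$ as $u_{\max}\to\infty$. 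You instead stay entirely inside the phase-plane formalism: the change of variables to the $u$-integral $\frac{1}{\sqrt{\lambda}}\int_0^{u_2^-}\frac{du}{\sqrt{2(C-F(u))}}$ is consistent with the paper's own formula in Lemma \ref{LA1}, the bound (i) drops out of $2(C-F(u))>(v_2^-)^2$ together with $u_2^-/v_2^-=-\sqrt{\lambda}/\alpha$, and the midpoint split with the elementary inequality $\sqrt{a+b}-\sqrt{a}\le\sqrt{b}$ reduces (ii) to $u_2^-/f(u_2^-/2)\to 0$. What your version buys is that it uses only the standing hypotheses \eqref{conditions}: the paper's proof of (ii) invokes convexity of $f$ (not listed among the lemma's assumptions) and, as written, degenerates when $f'(0)=0$, where the resulting inequality only bounds $L$ rather than sending it to $0$; your argument has no such restriction. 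What the paper's version buys is brevity and a structural explanation of the threshold $-1/\alpha$ as the singular length $1+\alpha L=0$ of the Green's function. The one point worth making explicit in your write-up is that $v_2^->0$ for $C>s_0$ (so that division by $v_2^-$ in step (i) is legitimate); this is guaranteed by the classification of the intersection points preceding Table \ref{tab:solutiontypes}.
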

\begin{proof}  The trajectory $\widehat{P^0P^-_2}$ corresponds to a solution of the boundary value problem
$u''(x) + \lambda f(u)=0$ in $(0,L)$ such that $u(0)=0$ and $u'(L)=-\alpha u(L)$. By means of the corresponding Green's function it can be written as an integral equation
$$
u(x)=\lambda \int_0^x \frac{1+\alpha L -\alpha x}{1 +\alpha L} \xi f(u(\xi))\:d\xi + \lambda \int_x^L\frac{1+\alpha L -\alpha \xi}{1+ \alpha L} xf(u(\xi))\:d\xi.
$$
Thus
$$
u_{\max}=u(L)= \frac{\lambda}{1+\alpha L} \int_0^L \xi f(u(\xi))\:d\xi.
$$
The first assertion follows from $u_{\max}>0$. Since $u(x)$ is concave we have $u(x)\geq \frac{u_{\max}}{L} x$. Hence
$$
u_{\max}\geq \frac{\lambda}{1+\alpha L} \int_0^L \xi f\left(\xi\frac{u_{\max}}{L}\right) \:d\xi.
$$
The change of variable $y=\frac{u_{\max}}{L}\,\xi$ and the convexity of $f$ lead to
\begin{align*}
u_{\max} \geq \frac{\lambda L}{(1+\alpha L)u_{\max}} \int_0^{u_{\max}}yf(y)\:dy&\geq\frac{\lambda L}{(1+\alpha L)u_{\max}} 
\int_0^{u_{\max}}y(f(0)+f'(0)y)\:dy\\
& =\frac{\lambda L}{(1+\alpha L)} \left( f(0)\frac{u_{\max}}{2}+ f'(0) \frac{u_{\max}^2}{3}\right).
\end{align*}
Hence $L\to 0$ as $u_{\max}\to \infty$ or equivalently $C\to \infty$.  
\end{proof}
Consider now the first term $L_{10}(C)$ in \eqref{LAM2}. The same type of arguments as for Lemma \ref{LemmaM1} imply
\begin{lemma} \label{LemmaM2} The length $L_{10}(C)$satisfies
\begin{align*}
(i) \quad L_{10}(C)>-\frac{1}{\alpha},\\
(ii) \quad \lim_{C\to \infty}L_{10}(C)= -\frac{1}{\alpha}.
\end{align*}
\end{lemma}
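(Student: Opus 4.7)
The trajectory $\widehat{P_1^+P^0}$ corresponds to the boundary value problem
$$u''+\lambda f(u)=0 \text{ on } (0,L_{10}), \qquad u'(0)=\alpha u(0), \qquad u(L_{10})=0,$$
where $u(0)=u_1^+=\gamma^*v_1^+<0$ (two negative factors $\gamma^*$ and $-$ wait, $v_1^+>0$, $\gamma^*<0$, so $u_1^+<0$), hence $u'(0)=\alpha u(0)>0$, and $u$ is monotone increasing since $u''=-\lambda f(u)<0$ and $u'(L_{10})=\sqrt{\lambda}\sqrt{2(C-s_0)}>0$.

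For part (i), I would avoid the Green's function and argue directly from concavity. Since $u''<0$ strictly, $u'$ is strictly decreasing on $[0,L_{10}]$, so
$$-u(0)=u(L_{10})-u(0)=\int_0^{L_{10}}u'(x)\,dx<u'(0)\,L_{10}=\alpha u(0)\,L_{10}.$$
Dividing by the positive quantity $\alpha u(0)$ yields $L_{10}(C)>-\tfrac{1}{\alpha}$.

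For part (ii), I would pass to the $u$-variable in the phase-plane integral. Using $v^2/2+F(u)=C$ so that $v\,dv=-f(u)\,du$, the formula defining $L_{10}$ becomes
$$L_{10}(C)=\int_{u_1^+}^{0}\frac{du}{\sqrt{2\lambda\,(C-F(u))}}.$$
Since $F$ is increasing and $u\le 0$ on the range of integration, $F(u)\le F(0)=s_0$, whence
$$L_{10}(C)\le\frac{-u_1^+}{\sqrt{2\lambda(C-s_0)}}.$$
Next I would determine the asymptotics of $v_1^+$ and $u_1^+$ as $C\to\infty$. The boundary relation $F(\gamma^*v_1^+)+(v_1^+)^2/2=C$ together with $u_1^+=\gamma^*v_1^+\to-\infty$ and the integrability of $f$ at $-\infty$ (which gives $F(u_1^+)\to 0$) forces $(v_1^+)^2/2\to C$, i.e.\ $v_1^+\sim\sqrt{2C}$. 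Since $-u_1^+=|\gamma^*|\,v_1^+=\sqrt{\lambda}\,v_1^+/|\alpha|$, substitution gives
$$\limsup_{C\to\infty}L_{10}(C)\le\lim_{C\to\infty}\frac{\sqrt{\lambda}/|\alpha|\cdot v_1^+}{\sqrt{2\lambda(C-s_0)}}=\lim_{C\to\infty}\frac{\sqrt{2\lambda C}/|\alpha|}{\sqrt{2\lambda(C-s_0)}}=\frac{1}{|\alpha|}=-\frac{1}{\alpha}.$$
Combining with (i) yields $L_{10}(C)\to-1/\alpha$.

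The only point requiring care is the asymptotics $v_1^+\sim\sqrt{2C}$, for which one must verify that $F(\gamma^*v_1^+)\to 0$. This is a direct consequence of the convention \eqref{defF2} and the integrability of $f$ at $-\infty$, so no serious obstacle arises; the rest is bookkeeping. One could alternatively mimic the Green-function argument of Lemma \ref{LemmaM1} for the mixed Robin–Dirichlet problem above, but the phase-plane substitution seems to deliver both the lower bound and the sharp asymptotic upper bound most economically.
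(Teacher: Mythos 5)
Your proof is correct, but it takes a genuinely different route from the paper on both parts. For (i) the paper writes down the Green's function of the mixed problem $u'(0)=\alpha u(0)$, $u(L)=0$, obtains $u(0)=\frac{\lambda}{1+\alpha L}\int_0^L(L-\xi)f(u(\xi))\,d\xi$, and concludes $1+\alpha L<0$ from $u(0)<0$ and the positivity of the integrand; your mean-value/concavity argument $-u(0)=\int_0^{L_{10}}u'<u'(0)L_{10}=\alpha u(0)L_{10}$ reaches the same strict inequality more elementarily (the strictness coming from $u''=-\lambda f(u)<0$), without ever invoking the Green's function. For (ii) the paper simply observes $L_{10}(C)<L_1(C)/2$ (the arc $\widehat{P_1^+P^0}$ is a proper sub-arc of the half-trajectory $\widehat{P_1^+\,(0,F^{-1}(C))}$) and quotes Lemma \ref{Lsymm}(i), $L_1(C)\to-2/\alpha$; you instead bound the phase-plane integral in the $u$-variable by $\frac{-u_1^+}{\sqrt{2\lambda(C-s_0)}}$ and work out the asymptotics $v_1^+\sim\sqrt{2C}$, $-u_1^+\sim\frac{\sqrt{\lambda}}{|\alpha|}\sqrt{2C}$ from the intersection relation $F(\gamma^*v_1^+)+\frac{(v_1^+)^2}{2}=C$ and $F(u_1^+)\to 0$. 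Your version is self-contained and makes the rate explicit, at the cost of essentially re-deriving the asymptotics that already underlie Lemma \ref{Lsymm}(i); the paper's version is a one-line reduction to that lemma. All the individual steps you use (the sign $u_1^+=\gamma^*v_1^+<0$, the boundedness of $F$ on $(-\infty,0]$ forcing $v_1^+\to\infty$, the change of variables $v\,dv=-f(u)\,du$) check out.
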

\begin{proof} 
The trajectory $\widehat{P^+_1P^0}$ corresponds to a solution of the boundary value problem
$u''(x) + \lambda f(u)=0$ in $(0,L)$ such that $u'0)=\alpha u(0)$ and $u(L)=0$. By means of the corresponding Green's function it can be written as an integral equation
$$
u(x)=-\lambda \int_0^x \frac{x-L}{1 +\alpha L} (\alpha \xi +1)f(u(\xi))\:d\xi - \lambda \int_x^L\frac{\alpha x+1}{1+\alpha L}(\xi-L)f(u(\xi))\:d\xi.
$$
Hence
$$
u(0)= \lambda \int_0^L \frac{L-\xi}{1+\alpha L} f(u(\xi))\:d\xi.
$$
The first statement follows from $u(0)<0$. The second statement is a consequence of $L_{10}(C)<L_1(C)/2$ and Lemma \ref{Lsymm}.
\end{proof}
In summary we have
\begin{theorem} \label{Thma1}Let $C>s_0$ and $\alpha<0, \lambda$ be fixed. Then 
$$
L_{12}(C)>-\frac{1}{\alpha}, \quad \lim_{C\to \infty} L_{12}(C)=-\frac{1}{\alpha}.
$$
If $L_{12}(C)$ is monotone decreasing in $C$, then for given $L\in (L_{12}(s_0), -\frac{1}{\alpha})$ there
exists one increasing monotone solution of \eqref{general} whereas if $L_{12}(C)$ isn't monotone, different solutions may appear.
\end{theorem}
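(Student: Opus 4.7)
The plan is to assemble the first two assertions from the decomposition $L_{12}(C) = L_{10}(C) + L_{02}(C)$ of \eqref{LAM2} together with Lemmas \ref{LemmaM1} and \ref{LemmaM2}, and to derive the counting statement via a continuous-image argument on the map $C \mapsto L_{12}(C)$.

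For the lower bound, I would observe that $L_{02}(C)$ is the integral of a strictly positive integrand over a non-degenerate interval, hence $L_{02}(C)>0$. Combining this with Lemma \ref{LemmaM2}(i), which gives $L_{10}(C)>-1/\alpha$, addition yields $L_{12}(C)>-1/\alpha$. For the limit, Lemma \ref{LemmaM1}(ii) and Lemma \ref{LemmaM2}(ii) give
$$\lim_{C\to\infty}L_{12}(C)=\lim_{C\to\infty}L_{10}(C)+\lim_{C\to\infty}L_{02}(C)=-\frac{1}{\alpha}+0=-\frac{1}{\alpha}.$$

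For the counting statement, I would use that the correspondence $C \mapsto \widehat{P^+_1P^-_2}$ between parameters $C\in(s_0,\infty)$ and monotone increasing trajectories is a bijection, since each trajectory determines a unique solution of \eqref{general}--\eqref{Robin} on $(0,L_{12}(C))$, and conversely the relation $C=F(u(x))+\tfrac{1}{2}v^2(x)$ evaluated at any point of the orbit recovers $C$. Therefore the number of monotone increasing solutions on a fixed interval of length $L$ coincides with the number of pre-images of $L$ under the map $L_{12}$. Continuity of $L_{12}$ on $(s_0,\infty)$ is immediate from the integral representation in Lemma \ref{LA1} together with the smooth dependence of the endpoints $v_1^+(C), v_2^-(C)$ on $C$ via the defining equations $F(\gamma^* v) + v^2/2 = C$. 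If $L_{12}$ is monotone decreasing, these facts identify $L_{12}$ as a continuous bijection from $(s_0,\infty)$ onto the interval with endpoints $L_{12}(s_0)$ and $-1/\alpha$, so the equation $L_{12}(C) = L$ has exactly one root for each admissible $L$, giving precisely one monotone increasing solution. If instead $L_{12}$ fails to be monotone, the intermediate value theorem produces levels $L$ intersecting the graph of $L_{12}$ at several points, each yielding a distinct monotone increasing solution.

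The argument itself poses no genuine obstacle: it is a rearrangement of the two lemmas plus a standard image-of-interval observation. The real difficulty, which the theorem sidesteps by stating the counting part conditionally, is to decide whether $L_{12}$ is monotone. Establishing this would require a sign analysis of $dL_{12}/dC$ analogous to the computation leading to \eqref{diffLC} in the symmetric case, but now with two moving endpoints $v_1^+(C)$ and $v_2^-(C)$ pulling in opposite directions, so that the resulting expression need not have a definite sign; this is presumably the reason the paper turns to the numerical investigations of Section 4 to exhibit the two possible regimes.
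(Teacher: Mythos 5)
Your proposal is correct and follows essentially the same route as the paper, which states Theorem \ref{Thma1} as an immediate summary of the decomposition \eqref{LAM2} together with Lemmas \ref{LemmaM1} and \ref{LemmaM2} and gives no separate proof. Your assembly of the lower bound and the limit from those two lemmas, and the reading of the conditional counting statement as a monotone-bijection/intermediate-value observation on $C\mapsto L_{12}(C)$, is exactly the intended argument.
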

The maximal number of monotone solutions depends on $L_1(C)$ where $C>s_0$.  
\begin{theorem} Under the assumptions of Lemma \ref{2sol} and if $L_1(C)$ is decreasing for $C>s_0$ there exist at most two increasing (decreasing) monotone solutions.
\end{theorem}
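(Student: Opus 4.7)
The plan is to argue by contradiction along the lines of Lemma \ref{2sol}. Assume three distinct monotone increasing solutions $u_1,u_2,u_3$ of \eqref{general}, \eqref{Robin} on $[0,L]$ exist, corresponding in the phase plane to trajectories $\widehat{P_1^+P_2^-}$ on $\mathcal{K}_{C_i}$ with $s_0<C_1<C_2<C_3$ and $L_{12}(C_i)=L$. The goal is to extract three ordered solutions of an auxiliary Dirichlet problem on a common interval and contradict Theorem \ref{g(u)} (ii).

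First I would order the boundary data. Differentiating the implicit relation $F(\gamma^{*}v_{1}^{+})+(v_{1}^{+})^{2}/2=C$ (as was done in Lemma \ref{conta}) shows that $v_{1}^{+}$ is strictly increasing in $C$; since $\gamma^{*}<0$ this gives $u_1(0)>u_2(0)>u_3(0)$, while the analogous computation at $P_{2}^{-}$ yields $u_1(L)<u_2(L)<u_3(L)$. Each $u_i$ therefore has a unique zero $x_i^{*}=L_{10}(C_i)\in(0,L)$. Lemma \ref{alpha<0} (2) guarantees that distinct solutions intersect, and strict monotonicity forces each pair to meet at exactly one point $x_{ij}$; comparing $u_2(x_{13})$ with $a=u_1(x_{13})=u_3(x_{13})$ reduces the configuration, up to a symmetric relabeling, to $x_{12}<x_{13}<x_{23}$. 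The degenerate case in which all three $x_{ij}$ coincide is handled separately using phase-plane uniqueness of the $(v,u)$-orbits.

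The central step is then to set $b=u_1(x_{12})=u_2(x_{12})$ and $\tilde u_i(y):=u_i(x_{12}+y)-b$ on $(0,L-x_{12})$. Both $\tilde u_1,\tilde u_2$ solve $\tilde u''+\lambda g(\tilde u)=0$ with $g(s):=f(s+b)$, and both vanish at $y=0$; the shifted nonlinearity $g$ inherits $g>0$, $g'\geq 0$ and $g''>0$ from \eqref{conditions} and the convexity hypothesis. A further restriction to $[x_{12},x_{23}]$, accompanied by the type of rescaling used in Case 3 of the proof of Lemma \ref{2sol}, produces three ordered Dirichlet solutions of a problem to which Theorem \ref{g(u)} (ii) applies, yielding the desired contradiction. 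The hypothesis that $L_1(C)$ is decreasing on $(s_0,\infty)$ enters via the symmetric extensions: continuing each $u_i$ along $\mathcal{K}_{C_i}$ past $P_{2,i}^{-}$ to $P_{1,i}^{-}$ gives a symmetric solution $U_i$ on $[0,L_1(C_i)]$, and monotonicity of $L_1$ forces $L_1(C_1)>L_1(C_2)>L_1(C_3)$, which supplies precisely the ordering of maxima needed to invoke the Dirichlet two-solution bound without ambiguity.

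The main obstacle is the bookkeeping in this last reduction: the Robin condition at $x=L$ becomes an \emph{inhomogeneous} Robin condition under the shift by $b$, so one cannot simply read off Dirichlet data at $x_{23}$ for the third solution $\tilde u_3$. As in Case 3 of Lemma \ref{2sol} the remedy is to pass to a sub-sub-interval delimited by pairs of intersection points and rescale the independent variable by $y\mapsto \tilde L\, y/(x_{23}-x_{12})$, tracking how $\lambda$ transforms and using the monotone dependence of the Dirichlet minimal and maximal solutions on $\lambda$. The second (mirror) case $x_{12}>x_{13}>x_{23}$ is handled identically after relabeling, and the decreasing asymmetric solutions then follow from the reflection $x\mapsto L-x$.
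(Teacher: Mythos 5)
There is a genuine gap at the central step of your argument. You propose to set $b=u_1(x_{12})=u_2(x_{12})$, shift $\tilde u_i(y)=u_i(x_{12}+y)-b$, restrict to $[x_{12},x_{23}]$, and thereby produce ``three ordered Dirichlet solutions'' to which Theorem \ref{g(u)}(ii) applies. This cannot work as stated: the $u_i$ are \emph{strictly monotone}, so each takes the value $b$ exactly once. Hence $\tilde u_1,\tilde u_2$ vanish at the left endpoint of your subinterval and at no other point, and no choice of subinterval yields homogeneous Dirichlet data at both ends for \emph{any} of the three functions. The difficulty is therefore not the ``bookkeeping'' issue you flag for $\tilde u_3$ (the inhomogeneous Robin condition); it already defeats the construction for $\tilde u_1$ and $\tilde u_2$. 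In Lemma \ref{2sol} the analogous reduction works only because the solutions there are symmetric about $L/2$, so one intersection at level $a$ at $\gamma L$ automatically produces a second one at $L(1-\gamma)$, giving a genuine two-point Dirichlet problem on $(\gamma L, L-\gamma L)$. Monotone solutions have no such second crossing.

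The paper's proof repairs exactly this by making the symmetric extension the \emph{first} step rather than an afterthought: each trajectory $\widehat{P_1^+P_2^-}$ on $\mathcal{K}_{C_i}$ is continued past the vertex $v=0$ down to $P_1^-$, producing a symmetric solution $U_i$ on an interval of length $L_1(C_i)$ with $\max U_i=F^{-1}(C_i)$. The hypothesis that $L_1$ is decreasing on $(s_0,\infty)$ is then used to order the \emph{lengths} of these extended intervals oppositely to the maxima, so that after centering all $U_i$ at their maxima the endpoint values and maxima interlace as in Figure \ref{fig:IntersectingSolutions}; from there the proof is literally that of Lemma \ref{2sol}, whose symmetry supplies the pairs of intersection points at a common level needed for the Dirichlet reduction. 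In your write-up the ordering of maxima is attributed to the monotonicity of $L_1$, but that ordering ($\max U_i = F^{-1}(C_i)$) is automatic; what the monotonicity of $L_1$ actually buys is the ordering of the interval lengths. If you restructure your argument so that the extension to the $U_i$ comes first and the entire intersection/shift/rescale machinery is applied to the symmetric $U_i$ rather than to the monotone $u_i$, you recover the paper's proof; as written, the core reduction fails.
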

\begin{proof} Suppose that Problem \eqref{general} with Robin boundary conditions has three solutions $u_i(x)$, $i=1,2,3.$
corresponding to the trajectories $s_0<C_3<C_2<C_1$. Complete $u_i(x)$ to a symmetric solution $U_i(x)$ in the interval $(0,L_1(C_i))$. By our assumption $L_1(C_1)>L_1(C_2)>L_1(C_3)$. Suppose that all $U_i$ - after a possible shift in $x$ - attain their maximum at the origin. Since
$U_1(-L_1(C_1)/2)<U_2(-L_1(C_2)/2)<U_3(-L_1(C_3)/2)$ and $U_1(L_1(C_1)/2)>U_2(L_1(C_2)/2)>U_3(L_1(C_3)/2)$ 
we are in the same situation as in Figure \ref{fig:IntersectingSolutions}. The remainder of the proof is now the same as the one of Lemma \ref{2sol}.
\end{proof}

{\sc Example} If $f(u)=e^u$ then
$$
L_{12}(C)= \sqrt{\frac{2}{C\,\lambda}}\left(\arctanh\left(\sqrt{1-\frac{e^{u^+_1}}{C}}\right)
- \arctanh\left(\sqrt{1-\frac{e^{u^-_2}}{C}}\right)\right).
$$
Here $u_1^+$ and $u_2^-$ are the negative resp. positive roots of
\begin{equation}\label{eq:u1pu2nGelfandLength}
2C-\left(\frac{u}{\gamma^*}\right)^2= 2 e^u.	
\end{equation}
{\color{gray}

\begin{figure}[H]
\centering
\begin{subfigure}[t]{8.1cm}
\centering
\includegraphics[height=6cm]{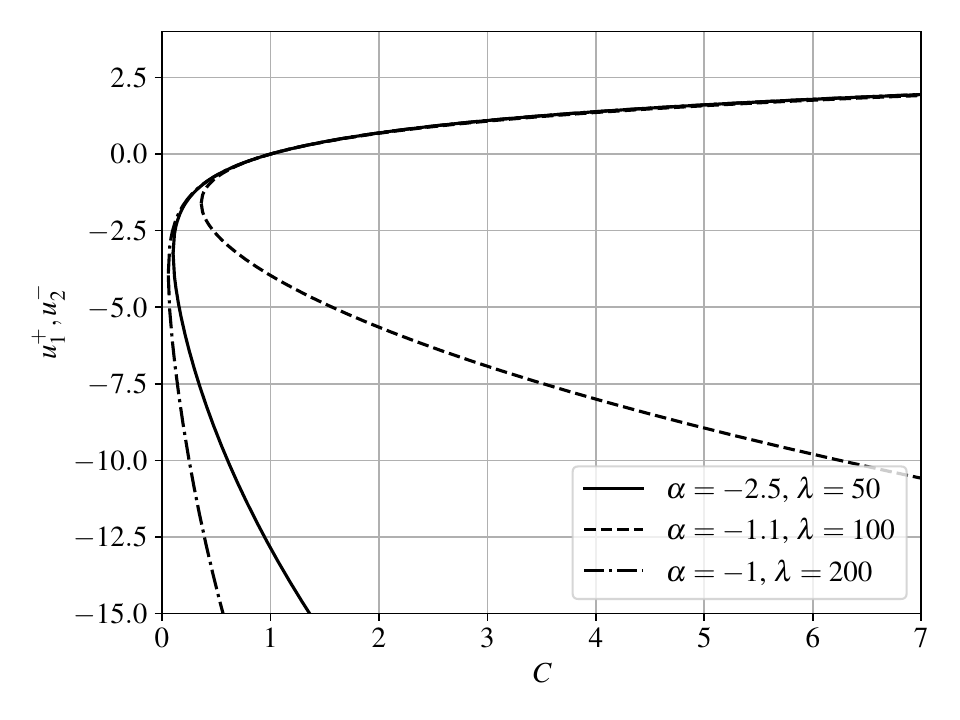}%
\caption{Negative resp. positive roots of (\ref{eq:u1pu2nGelfandLength})}
\end{subfigure}
\begin{subfigure}[t]{8.1cm}
\centering
\includegraphics[height=6cm]{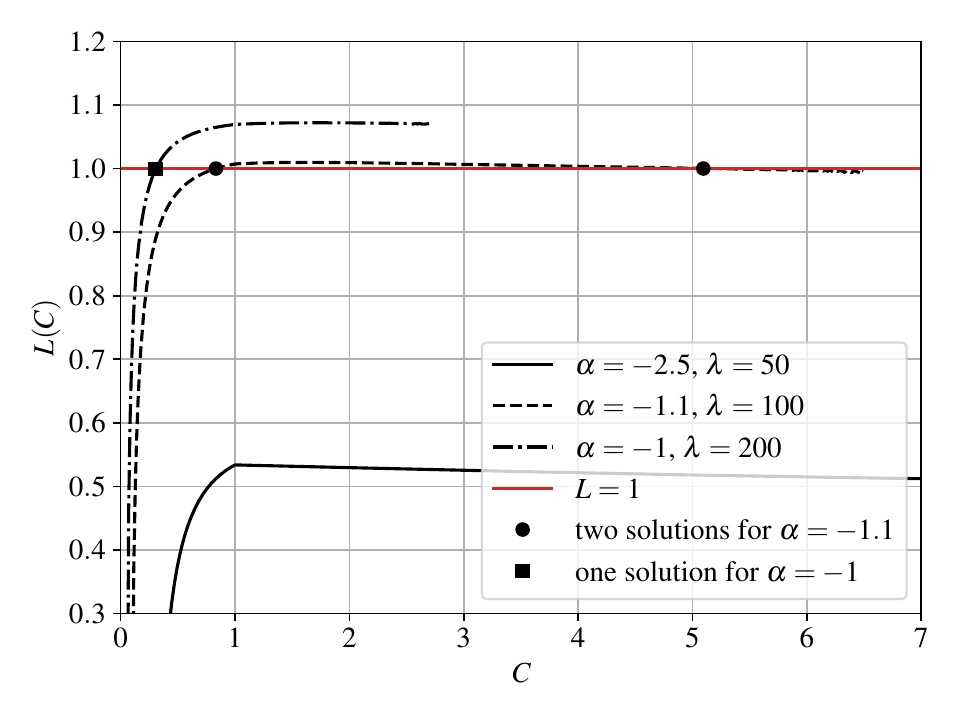}%
\caption{Length $L_{12}(C)$ for selected $\alpha$'s and $\lambda$'s.}
\end{subfigure}
\caption{\label{fig:ExampleLC12Gelfand}$L_{12}(C)$ for $f(u) = e^u$ and the number of solutions for a given length $L=1$.}
\end{figure}
\medskip
}
\subsubsection{Non - monotone solutions}
Consider now non-monotone asymmetric solutions. They correspond to the trajectories  $\widehat{P^+_1P^-_2}$ and $\widehat{P^+_2P^-_1}$ and to $C\in (\tilde C,s_0)$ (see Figure \ref{fig:NonMonotoneSolutions}). 

\begin{figure}[H]
	\centering
	\begin{subfigure}{0.35\textwidth}\centering
	\includegraphics[width=60mm]{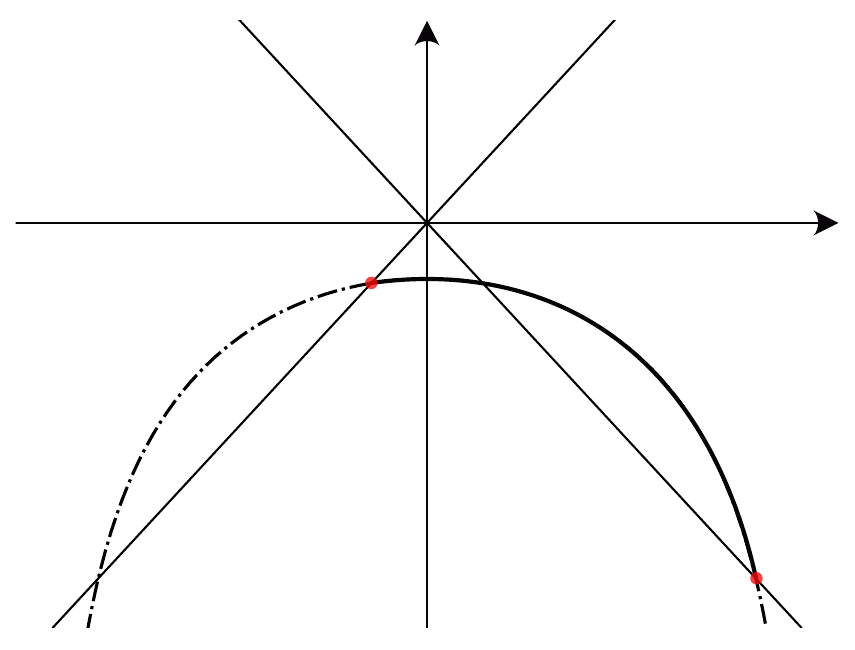}%
	\drawat{-6mm}{5mm}{$P^+_1$}%
	\drawat{-40mm}{26.5mm}{$P^-_2$}%
	\drawat{-33.5mm}{43mm}{$u$}%
	\drawat{-3mm}{31.5mm}{$v$}%

	\caption{Phase plane trajectory $\widehat{P^+_1P^-_2}$}
	\end{subfigure}%
	\begin{subfigure}{0.35\textwidth}
	\centering
	\includegraphics[width=50mm]{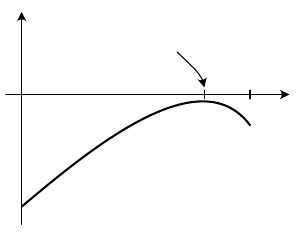}%
	\drawat{-3mm}{25.5mm}{\small$x$}%
	\drawat{-9.mm}{25.5mm}{\small$L$}%
	\drawat{-30.mm}{33mm}{\small$L-L_2(C_1)/2$}%
	
	\caption{Non-monotone solution}
	\end{subfigure}
	
	\caption{\label{fig:NonMonotoneSolutions}}
\end{figure}

Obviously also in this case we have $L_{12}=L_{21}$. 
Clearly
$$
2L_{12}(C)= L_1(C)+L_2(C).
$$
Next we discuss the number of  solutions which are represented by the trajectories  $\widehat{P_1^+P_2^-}$. 

\begin{lemma}\label{2solb} Assume \eqref{conditions} and $f''>0$. Then Problem \eqref{general} with Robin boundary conditions has at most  two asymmetric non-monotone solutions corresponding to the trajectories $\widehat{P_1^+P_2^-}$. 
\end{lemma}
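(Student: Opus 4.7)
The plan is to argue by contradiction, mimicking the strategy of Lemma \ref{2sol}. Suppose three non-monotone asymmetric solutions $u_1, u_2, u_3$ of Problem \eqref{general}, \eqref{Robin} exist, corresponding to trajectories $\widehat{P_1^+P_2^-}$ on $\mathcal{K}_{C_i}$ with $\tilde C < C_1 < C_2 < C_3 < s_0$. Since $u_i'' = -\lambda f(u_i) < 0$, each $u_i$ is strictly concave and, being non-monotone, attains a unique interior maximum $M_i = F^{-1}(C_i)$ at some $x_i^* \in (0,L)$. By the uniqueness theorem for initial value problems applied at $x_i^*$ (where $u_i'(x_i^*)=0$), each $u_i$ is symmetric under reflection about $x_i^*$. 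From the phase plane (Figure \ref{fig:Figure3Mod}) one reads off $x_i^* = L_1(C_i)/2$ and $L - x_i^* = L_2(C_i)/2$. Since $L_2(C)$ is strictly decreasing on $(\tilde C,s_0)$ by Theorem \ref{auxiliary1}, the peak positions are ordered $x_1^* < x_2^* < x_3^*$, and the peak values satisfy $M_1 < M_2 < M_3$. A short bookkeeping of the positions of $P_1^+(C_i)$ and $P_2^+(C_i)$ along $\mathcal L^\pm$ further gives $u_1(0) > u_2(0) > u_3(0)$ and $u_1(L) < u_2(L) < u_3(L)$.

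By Lemma \ref{alpha<0}, any two of the $u_i$ intersect in $(0,L)$. Following Lemma \ref{2sol}, pick an intersection point $\xi \in (0,L)$ of $u_1$ and $u_2$ with common value $a := u_1(\xi) = u_2(\xi) < 0$. Because $u_3$ lies below $u_1$ and $u_2$ at the boundary but has the highest peak, the intermediate value theorem forces $u_3$ to attain the same value $a$ at (at least) one point $\ell \in (0,L)$. Translating the origin to $\xi$ and subtracting the constant $a$ produces two solutions of the Dirichlet sub-problem
\[
\tilde u_i'' + \lambda f(\tilde u_i + a) = 0 \text{ in } (0,\tilde L), \quad \tilde u_i(0) = \tilde u_i(\tilde L) = 0,
\]
where $\tilde L$ denotes the length of the positivity interval of $u_i - a$ around $\xi$. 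Performing the analogous shift for $u_3$ (based at $\ell$) and, if $\ell \ne \xi$, a horizontal rescaling turning $\lambda$ into $\tilde\lambda = \lambda\tilde L^2/\tilde\ell^2$, yields three ordered solutions of the Dirichlet problem on a common interval. This contradicts Theorem \ref{g(u)}(ii) together with the $\lambda$-monotonicity of the minimal and maximal Dirichlet solutions (the Dirichlet counterpart of Lemma \ref{propertyUu}), exactly as at the end of the proof of Lemma \ref{2sol}. In the exceptional case $\ell = \xi$, all three shifted solutions live on the same sub-interval, so three solutions of the same Dirichlet problem coexist, directly contradicting Theorem \ref{g(u)}(ii).

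The main obstacle is the combinatorial case analysis of the relative positions of the intersection points. In Lemma \ref{2sol} the three solutions share the axis of symmetry $L/2$, which allowed a clean trichotomy $\ell = \gamma L$, $\ell < \gamma L$, $\ell > \gamma L$ as in Figures \ref{fig:IntersectingSolutionsA} and \ref{fig:IntersectingSolutionsB}. Here the three solutions have three distinct centers of symmetry $x_1^* < x_2^* < x_3^*$ and distinct boundary values at $0$ and at $L$, so one must enumerate more configurations and verify case-by-case that the common value $a$ and the points $\xi,\ell$ can be chosen so that, after the vertical shift and the horizontal rescaling, one obtains three Dirichlet solutions with ordered maxima and monotonically related effective eigenvalue parameters. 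The convexity hypothesis $f''>0$ is used, as in Lemma \ref{2sol}, precisely to invoke Theorem \ref{g(u)}(ii) in the reduced Dirichlet problem.
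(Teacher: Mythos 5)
Your setup (ordering of the $C_i$, peak positions and values, boundary values, pairwise intersection via Lemma \ref{alpha<0}) agrees with the paper's, but the core of the argument is missing. The reduction you propose --- pick one intersection point $\xi$ of $u_1$ and $u_2$ with common value $a$ and pass to a Dirichlet subproblem on a common interval --- does not go through, and you essentially concede this when you call the ``combinatorial case analysis'' the main obstacle and leave it unresolved. The trichotomy of Lemma \ref{2sol} worked because all three solutions there are symmetric about the \emph{same} point $L/2$, so $u_1-a$ and $u_2-a$ vanish at the \emph{same pair} of points $\gamma L$ and $L(1-\gamma)$, producing two Dirichlet solutions on one common interval $(0,\tilde L)$. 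Here the three solutions have three distinct symmetry centers $x_1^*<x_2^*<x_3^*$, so a single intersection point $\xi$ gives no common second zero of $u_1-a$ and $u_2-a$: the ``two solutions of the Dirichlet sub-problem'' you write down are not both Dirichlet solutions on the same interval, and the subsequent rescaling step has nothing to act on.

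The paper closes exactly this gap with one idea your proposal lacks: each non-monotone solution $u_i$ is \emph{completed}, by reflection about its peak, to a symmetric solution $U_i$ on the longer interval $(0,L_1(C_i))$; since $L_1(C_i)$ and the maxima $F^{-1}(C_i)$ are ordered in the same way, translating all the $U_i$ so that their maxima sit at the origin lands one precisely in the configuration of Figure \ref{fig:IntersectingSolutions} --- three mutually intersecting symmetric solutions with ordered maxima on nested intervals --- after which the trichotomy of Lemma \ref{2sol} applies verbatim. Without this completion step your argument does not reach a contradiction. Two smaller slips: $u_3$ does not lie below $u_1,u_2$ at both endpoints (at $x=L$ it is the largest by your own ordering, so the intermediate-value step needs restating), and the claimed symmetry of $u_i$ about $x_i^*$ holds only on the part of $(0,L)$ whose reflection stays inside $(0,L)$ --- which is exactly why the extension to $(0,L_1(C_i))$ is needed in the first place.
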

\begin{proof} 
Let $u_i(x)$ be three solutions depending on $C_i$ for $i=1,2,3$. We will show that as in Lemma \ref{2sol} this leads to a contradiction. The next observations are based on the phase plane.
\medskip

\noindent
1. For a given $C_1<s_0$ the trajectory $\widehat{P^+_1P^-_2}$ (see Figure \ref{fig:NonMonotoneSolutions}) 
corresponds to a solution $u_1$ of \eqref{general} and \eqref{Robin} with 
$u_1(0)<u_1(L)$. It is increasing in the interval $(0,L-\frac{L_2(C_1)}{2})$ and then 
decreasing in $(L-\frac{L_2(C_1)}{2},L)$. It is symmetric in $(L-L_2(C_1),L)$ with respect to reflections in the point $L-\frac{L_2(C_1)}{2}$. For $C_2<C_1<s_0$ there is a solution $u_2$ with the same properties, when we replace $C_1$ by $C_2$. Now assume there is a solution $u_3$ for some constant $C_3<C_2<C_1<s_0$. Thus in $(0,L)$ we assume there exist three solutions $u_i$, $i=1,2,3$ corresponding to 
$\tilde C<C_3<C_2<C_1<s_0$.
\medskip

\noindent
2. By the phase plane $u_1(0)<u_2(0)<u_3(0)$ and $u_1(L)>u_2(L)>u_3(L)$. Thus all solutions must intersect each other at least once. Let us consider $u_1$ and $u_2$. Since $L_2(C)$ is monotone decreasing and $u_2(x)<u_1(x)$ in $L-L_2(C_1)$ , the function $u_2$ intersects $u_1$ at some $0<\ell_{12}<L-L_2(C_1)$. Similarly $u_3(x)$ intersects $u_2(x)$ in $\ell_{32}<L-L_2(C_2)$ and $u_1(x)$,  $\ell_{31}>L-L_2(C_1)$. 
\medskip

\noindent
3. Since $L_2(C)$ is monotone decreasing and $u_2(x)<u_1(x)$ in $L-L_2(C_1)$ , $u_2(x)$ intersects $u_1(x)$ at some $\ell_{12}<L-L_2(C_1)$. Note that $u_1$ is monotone on $(0,L-L_2(C_1)$. A concave function ($u_2$) intersects a monotone function ($u_1$) at most once. Thus $\ell_{12}$ is the unique.
Similarly $u_3(x)$ intersects $u_2(x)$ only in some $\ell_{32}<L-L_2(C_2)$ and $u_1$ intersects  $\ell_{31}>L-L_2(C_1)$. 
\medskip

\noindent 
4. Since $u_{\max}= F^{-1}(C)$ we have $\max u_1>\max u_2>\max u_3$. Complete now $u_i(x)$ to a symmetric solution $U_i(x)$ in the interval $(0, L_1(C_i))$. Clearly $L_1(C_1)>L_1(C_2)>L_1(C_3)$. Let us shift all $U_i(x)$ such that they attain their maximum in the origin. This leads to the same configuration  as in Lemma \ref{2sol}. The  remainder of the proof is as for Lemma \ref{2sol}.
\end{proof}

\section{Numerical Results}
In order to compute for fixed $L, \alpha$ and $\lambda$ the number of solutions of problem \eqref{general} we write it in the weak form
\begin{equation}\label{eq:Gulambda}
G_\alpha(u,\lambda) := \int_0^L \big(u'\cdot v' - \lambda f(u) v \big) dx + \alpha (u(L) v(L) + u(0) v(0))  = 0 \quad \forall v\in V_h.
\end{equation}
We use a high order finite element discretization $V_h$ \cite{NGSolve}  to compute the path 
\begin{align*}
\Gamma_{\alpha} := \{(u(s), \lambda(s))\ |\ u(s)\in V_h,\ G_\alpha(u(s),\lambda(s)) = 0\ \forall s\in I\subset \R\},
\end{align*}
given by the functions $u$, the associated parameter $\lambda$ and a pseudo-arc length $s$ .  At turning points on a regular path, Newton's method fails. More sophisticated methods are needed. We use the pseudo-arc length continuation (cf. for instance \cite{Mi}). All solution paths are computed for L = 1 fixed.

We obtain symmetric solution paths 
if we start at $\lambda = 0$. To be able to calculate the path of asymmetric solutions, we need a solution for a given $\lambda$. The path can be continued from this solution. In the case of the Bratu-Gelfand problem, we can calculate this solution analytically. If the solution is not given analytically, we calculate solutions for a fixed $\lambda$ using shooting methods and project them into the finite element space to calculate the complete path. In addition to the Bratu-Gelfand problem, numerical results for two further problems are shown later.
\medskip

\noindent
{\bf Bratu-Gelfand problem.}
The Figure \ref{fig:PathesGelfandEquationbw} shows solution paths for the Bratu-Gelfand equation $u''+ \lambda e^u = 0$ for different $\alpha<0$. For a fixed $\lambda$, the number of solutions can be determined from the diagram. Recall that the asymmetric solutions must be counted twice by reflection. For example, we obtain a total of five solutions for $\lambda = 100$ and $\alpha = -1.1$. On the other hand, for $\lambda = 250$ and $\alpha = -1$  only three solutions exists.

\begin{figure}[h]
\centering
\includegraphics[width=15cm]{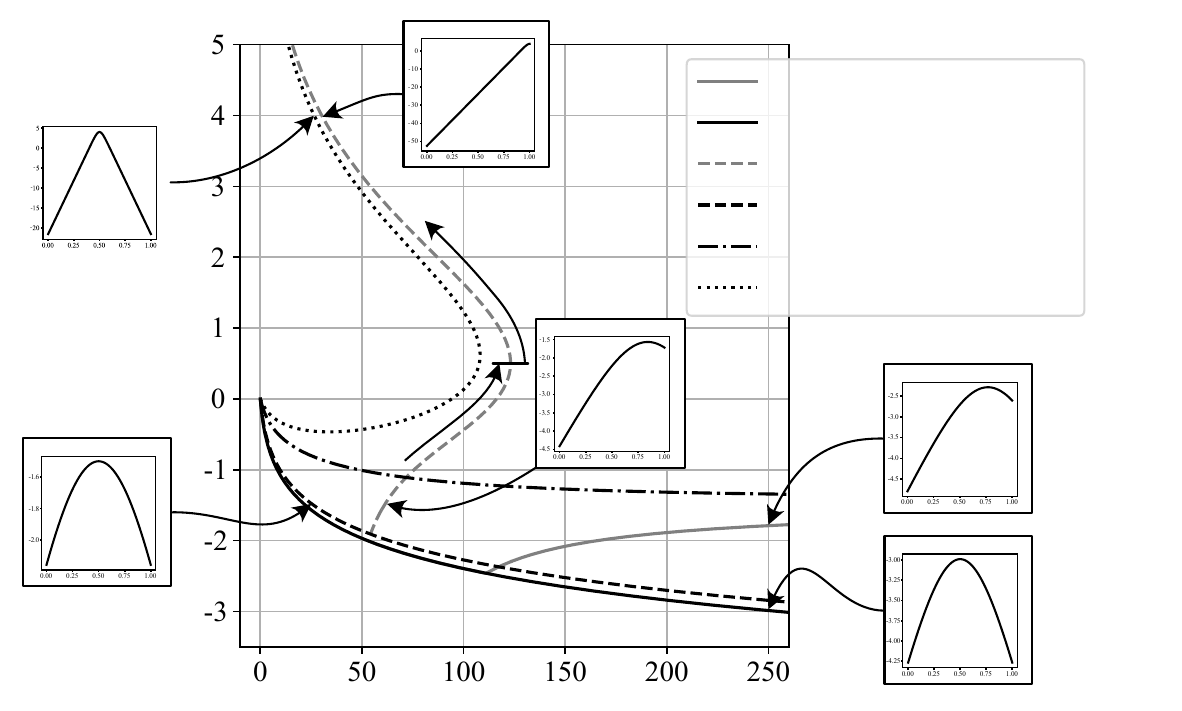}%
\drawat{-5.15cm}{7.9cm}{\small asymmetric, $\alpha=-1$}%
\drawat{-5.15cm}{7.4cm}{\small symmetric, $\alpha=-1$}%
\drawat{-5.15cm}{6.85cm}{\small asymmetric, $\alpha=-1.1$}%
\drawat{-5.15cm}{6.35cm}{\small symmetric, $\alpha=-1.1$}%
\drawat{-5.15cm}{5.8cm}{\small symmetric, $\alpha=-2$}%
\drawat{-5.15cm}{5.3cm}{\small symmetric, $\alpha=-2.5$}%
\drawat{-8.5cm}{0cm}{\small$\lambda$}%
\drawat{-9.4cm}{6.2cm}{\scriptsize\textcircled{\drawat{-.75mm}{-.3mm}{\scriptsize{2}}}}%
\drawat{-8.6cm}{7.25cm}{\scriptsize\textcircled{\drawat{-.75mm}{-.3mm}{\scriptsize{2}}}}%
\drawat{-10.cm}{3.4cm}{\scriptsize\textcircled{\drawat{-.75mm}{-.3mm}{\scriptsize{1}}}}%
\drawat{-6.9cm}{3.45cm}{\scriptsize\textcircled{\drawat{-.75mm}{-.3mm}{\scriptsize{1}}}}%
\drawat{-13.cm}{3.7cm}{\rotatebox{90}{\small$\displaystyle\max_{x\in[0,1]} u(x)$}}%

\parbox{15cm}{\caption{\label{fig:PathesGelfandEquationbw}Solution pathes and selected solutions for the Gelfand equation. On the path for asymmetric solutions for $\alpha=-1.1$ we have non-monotone asymmetric solutions up to the turning point {\scriptsize\textcircled{\drawat{-.75mm}{-.3mm}{\scriptsize{1}}}} and after monotone symmetric solutions {\scriptsize\textcircled{\drawat{-.75mm}{-.3mm}{\scriptsize{2}}}}}}
\end{figure}

\noindent
{\bf Other convex functions.} The first alternative problem is given by
\begin{equation}\label{eq:alternativeI}-u''(x) = \lambda \begin{cases}\frac{1}{(u(x)-1)^2} & \quad u(x) < 0\\
1 + 2\, u + 3\, u^2 & \quad\text{else}.\end{cases}
\end{equation}
The nonlinearity satisfies the necessary conditions (\ref{conditions}). Analogous to the Bratu-Gelfand equation (see Figure \ref{fig:PathesGelfandEquationbw}), we obtain symmetric as well as asymmetric solutions. The solution paths are shown in Figure \ref{fig:PathesAlternativeI2bwMod}.

\begin{figure}[H]
\centering
\includegraphics[width=13.5cm]{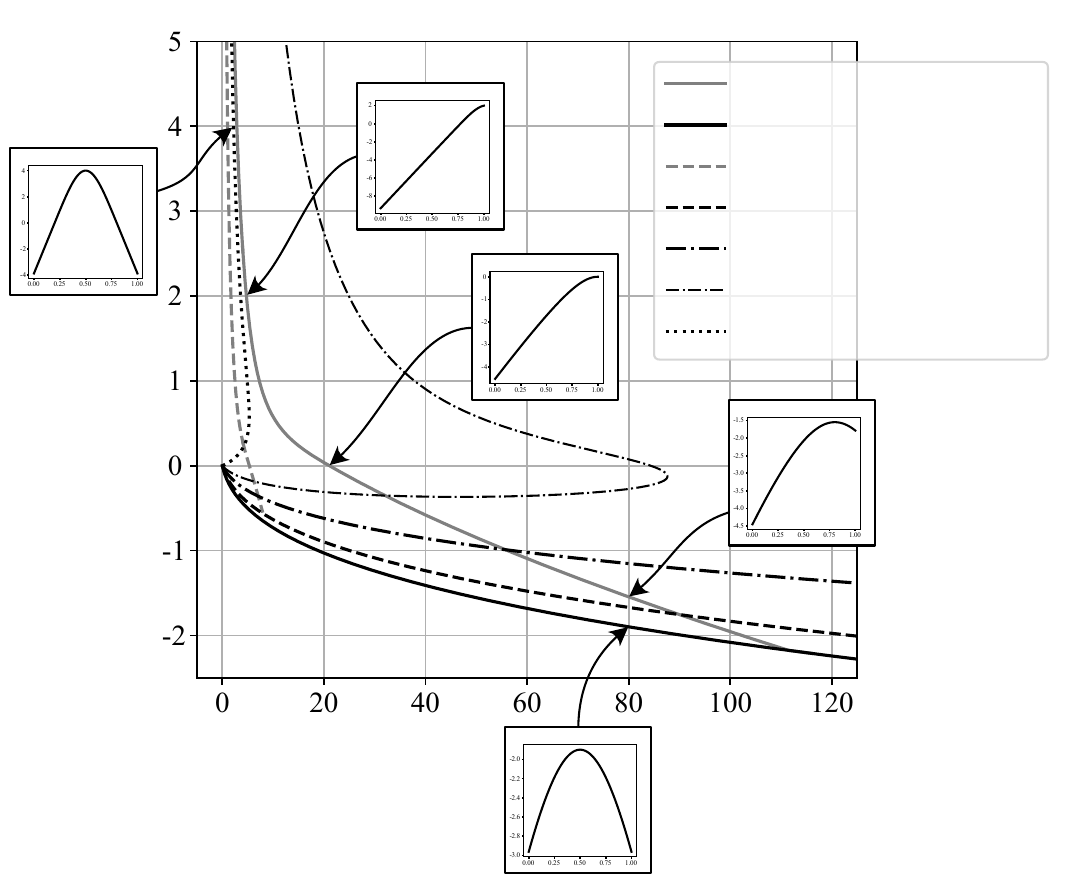}%
\drawat{-4.1cm}{10.1cm}{\small asymmetric, $\alpha=-1.3$}%
\drawat{-4.1cm}{9.6cm}{\small symmetric, $\alpha=-1.3$}%
\drawat{-4.1cm}{9.05cm}{\small asymmetric, $\alpha=-1.5$}%
\drawat{-4.1cm}{8.5cm}{\small symmetric, $\alpha=-1.5$}%
\drawat{-4.1cm}{8.0cm}{\small symmetric, $\alpha=-2$}%
\drawat{-4.1cm}{7.5cm}{\small symmetric, $\alpha=-2.6$}%
\drawat{-4.1cm}{7cm}{\small symmetric, $\alpha=-5$}%
\drawat{-3.6cm}{1.9cm}{\small$\lambda$}%
\drawat{-12.1cm}{5.5cm}{\rotatebox{90}{\small$\displaystyle\max_{x\in[0,1]} u(x)$}}%

\parbox{14cm}{\caption{\label{fig:PathesAlternativeI2bwMod}Solution paths and selected solutions for the alternative equation (\ref{eq:alternativeI}).}}
\end{figure}

A second alternative problem only for $\alpha > 0$ is given by
\begin{equation}\label{eq:alternativeII}-u''(x) = \lambda (1+u(x)^2).\end{equation}
Since $f(u) = 1+u^2$ it satisfies \eqref{conditions} only for $u\ge 0$. Let $\alpha > 0$. In this case there are only symmetrical solutions. Figure \ref{fig:PathesAlternativeEqIIbw} shows solution paths for selected alpha's.

\begin{figure}[H]
\centering
\includegraphics[width=8cm]{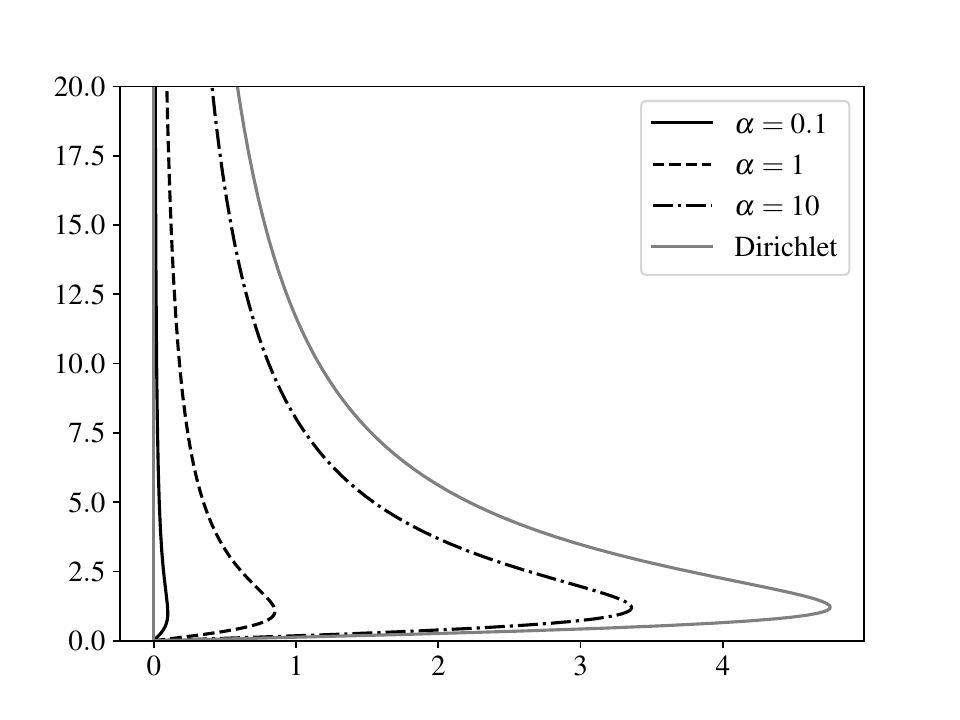}%
\drawat{-3.8cm}{0.1cm}{\small$\lambda$}%
\drawat{-8.3cm}{2.5cm}{\rotatebox{90}{\small$\displaystyle\max_{x\in[0,1]} u(x)$}}%

\caption{\label{fig:PathesAlternativeEqIIbw}Solution paths for (\ref{eq:alternativeII}).}
\end{figure}

\noindent
{\bf Non-convex function.} We consider the problem
\begin{equation}\label{eq:nonConvexExample}
	\begin{split}
	-u''(x) & = \lambda \left(\frac{256 u^5}{45}-\frac{64 u^4}{3}+\frac{64 u^3}{3}+u+1\right)\\
	u(0) & = u(1) = 0.
\end{split}
\end{equation}
Figure \ref{fig:convexExampleP5SolutionPath} indicates the existence of four solutions for special values of $\lambda$.

\begin{figure}[H]
	\centering
	\begin{subfigure}[t]{0.4\textwidth}\centering
	\includegraphics[width=60mm]{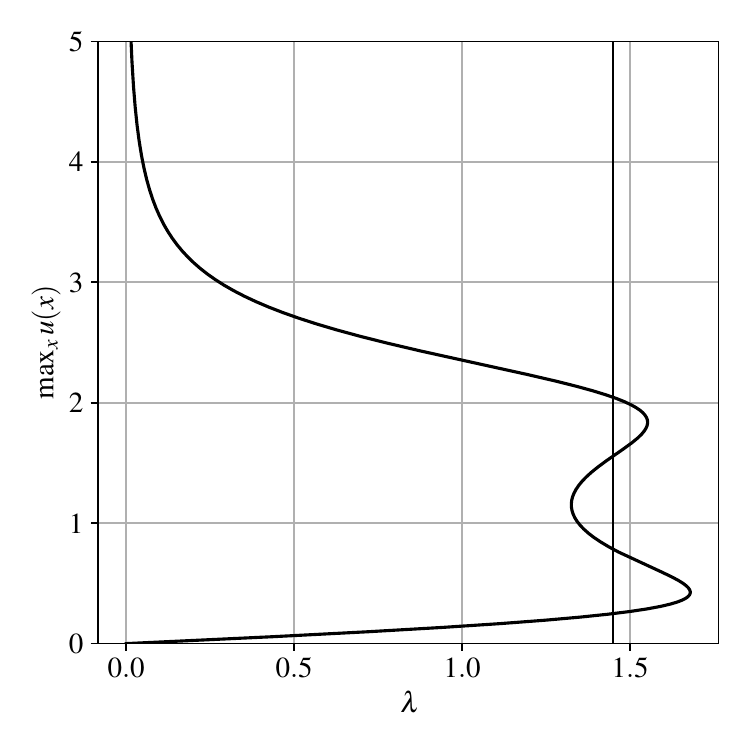}%
	
	\parbox{60mm}{\caption{Solution path for Dirichlet boundary conditions.}}
	\end{subfigure}
	\begin{subfigure}[t]{0.4\textwidth}\centering
	\includegraphics[width=60mm]{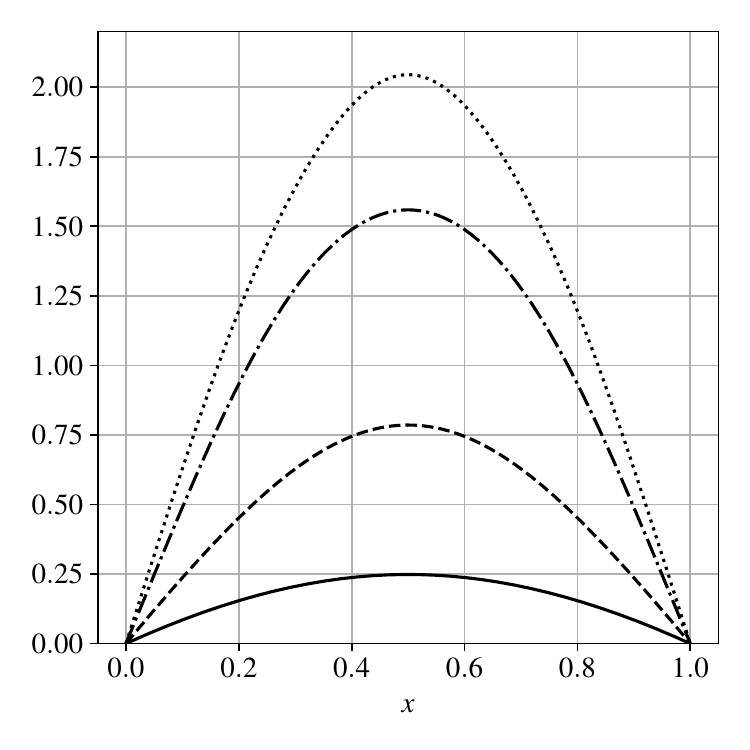}%
	
	\parbox{60mm}{\caption{Solutions for $\lambda = 1.45$.}}
	\end{subfigure}

	\caption{\label{fig:convexExampleP5SolutionPath}Solution path for the non-convex problem (\ref{eq:nonConvexExample}).}
\end{figure}


\end{document}